\documentclass{article}
\usepackage{amsmath,amssymb,amsthm,color,graphicx,diagbox,tikz,colortbl,array}
\usepackage{setspace}
\usepackage{float}

\title{Modal logics of conjunctively closed provability predicates}
\author{Haruka Kogure\footnote{Email: kogure@stu.kobe-u.ac.jp}
\footnote{Graduate School of System Informatics, Kobe University, 1-1 Rokkodai, Nada, Kobe 657-8501, Japan.}
and Taishi Kurahashi\footnote{Email: kurahashi@people.kobe-u.ac.jp}
\footnote{Graduate School of System Informatics, Kobe University, 1-1 Rokkodai, Nada, Kobe 657-8501, Japan.}}
\date{}

\theoremstyle{plain}
\newtheorem{thm}{Theorem}[section]
\newtheorem*{thm*}{Theorem}
\newtheorem{lem}[thm]{Lemma}
\newtheorem{prop}[thm]{Proposition}
\newtheorem{cor}[thm]{Corollary}
\newtheorem{fact}[thm]{Fact}
\newtheorem*{fact*}{Fact}
\newtheorem{prob}[thm]{Problem}
\newtheorem*{prob*}{Problem}
\newtheorem*{cl*}{Claim}
\newtheorem{cl}{Claim}[section]
\newtheorem*{scl*}{Subclaim}

\theoremstyle{definition}
\newtheorem{defn}[thm]{Definition}

\newcommand{\PL}{\mathsf{PL}}

\newcommand{\Sub}{\mathsf{Sub}}
\newcommand{\MF}{\mathsf{MF}}
\newcommand{\tc}{\vdash^{\mathrm{tc}}}

\newcommand{\clo}{\mathrm{cl}}

\newcommand{\PA}{\mathsf{PA}}
\newcommand{\PR}{\mathrm{Pr}}

\newcommand{\Prov}{\mathrm{Prov}}
\newcommand{\Proof}{\mathrm{Proof}}
\newcommand{\Con}{\mathrm{Con}}

\newcommand{\gn}[1]{\ulcorner#1\urcorner}

\newcommand{\D}[1]{\mathbf{D#1}}

\newcommand{\num}{\overline}

\newcommand{\LA}{\mathcal{L}_A}

\newcommand{\ConS}{\mathrm{Con}^{\mathrm{S}}}

\newcommand{\ConL}{\mathrm{Con}^{\mathrm{L}}}

\newcommand{\dg}{\mathrm{d}}

\newcommand{\Th}{\mathrm{Th}}

\newcommand{\N}{\mathsf{N}}
\newcommand{\CN}{\mathsf{CN}}
\newcommand{\CNF}{\mathsf{CN4}}
\newcommand{\CNP}{\mathsf{CNP}}
\newcommand{\CND}{\mathsf{CND}}
\newcommand{\CNPF}{\mathsf{CNP4}}
\newcommand{\CNDF}{\mathsf{CND4}}
\newcommand{\GL}{\mathsf{GL}}

\begin{document}

\maketitle

\begin{abstract}
We investigate non-normal modal logics corresponding to provability predicates $\PR_T(x)$ satisfying the derivability condition $\mathbf{C}$: $T\vdash\PR_T(\gn{\varphi})\land\PR_T(\gn{\psi})\to \PR_T(\gn{\varphi\land\psi})$. 
The modal counterpart of this condition is the axiom scheme $\mathsf{C}$: $\Box A\land\Box B\to\Box(A\land B)$. 
First, we introduce a new semantics based on closure operators for non-normal modal logics including logics adopting $\mathsf{C}$ as an axiom scheme. 
We prove modal completeness for several non-normal modal logics studied in this paper with respect to this semantics.
Second, we prove the arithmetical completeness theorems for the logics $\CN$, $\CNP$, $\CNF$, $\CNPF$, and $\CND$ by using our new semantics.
\end{abstract}

\section{Introduction}

Let $T$ be a primitive recursively axiomatized consistent $\LA$-theory extending Peano Arithmetic $\PA$, where $\LA$ is the language of first-order arithmetic. 
In the usual proof of G\"odel's incompleteness theorems, provability predicates of a theory $T$ play important roles. 
The theorem states that if a provability predicate $\PR_T(x)$ of $T$ satisfies the following Hilbert--Bernays--L\"ob derivability conditions $\D{2}$ and $\D{3}$, then $T$ cannot prove the consistency statement $\Con_T : \equiv \neg \PR_T(\gn{0=1})$ of $T$. 
\begin{itemize}
    \item $\D{2}$: $T \vdash \PR_T(\gn{\varphi \to \psi}) \to (\PR_T(\gn{\varphi}) \to \PR_T(\gn{\psi}))$. 
    \item $\D{3}$: $T \vdash \PR_T(\gn{\varphi}) \to \PR_T(\gn{\PR_T(\gn{\varphi})})$. 
\end{itemize}
These conditions naturally correspond to the following modal principles $\mathsf{K}$ and $\mathsf{4}$ respectively by interpreting the modal operator $\Box$ as $\PR_T(x)$. 
\begin{itemize}
    \item $\mathsf{K}$: $\Box(A \to B) \to (\Box A \to \Box B)$. 
    \item $\mathsf{4}$: $\Box A \to \Box \Box A$. 
\end{itemize}
For each provability predicate $\PR_T(x)$ of $T$, let $\PL(\PR_T)$ be the set of all $T$-verifiable modal principles of $\PR_T(x)$. 
Solovay's arithmetical completeness theorem \cite{Solovay} states that if $T$ is $\Sigma_1$-sound, then for the canonical $\Sigma_1$ provability predicate $\Prov_T(x)$ of $T$, $\PL(\Prov_T)$ is exactly the G\"odel--L\"ob modal logic $\GL$.

On the other hand, not all provability predicates satisfy the full
Hilbert--Bernays--L\"ob derivability conditions. 
For instance, it is known that the second incompleteness theorem does not hold for Rosser provability predicates. 
Hence every Rosser provability predicate does not satisfy at least one of $\D{2}$ and $\D{3}$. 
Later, Mostowski \cite{Mos65} introduced a particularly simple example of a $\Sigma_1$ provability predicate for which the second incompleteness theorem fails. Let $\PR_T^{\mathrm{M}}(x) :\equiv \PR_T(x)\land x\neq \gn{0=1}$.
Then $T$ proves the corresponding consistency statement $\neg\PR_T^{\mathrm{M}}(\gn{0=1})$. 
On the other hand, $\PR_T^{\mathrm{M}}(x)$ still satisfies the derivability condition $\D{3}$, and hence it cannot satisfy $\D{2}$. 
Then, the corresponding provability logic $\PL(\PR_T^{\mathrm{M}})$ is a non-normal modal logic.

The pure logic of necessitation $\N$, which was introduced by Fitting, Marek, and Truszczy\'nski \cite{FMT}, is obtained from classical propositional logic by adding only the necessitation rule $\dfrac{A}{\Box A}$ in the language of modal propositional logic. 
The rule corresponds to the derivability condition $\mathbf{D1}: T \vdash \varphi \Rightarrow T \vdash \PR_T(\gn{\varphi})$, which is common to all provability predicates. 
In fact, the second author proved in \cite{Kura24} that $\N$ is exactly the provability logic of all provability predicates. 
Moreover, the arithmetical completeness of several extensions of $\N$ obtained by adding modal principles corresponding to derivability conditions and consistency principles has been established in our previous works \cite{Kogure1,Kogure2,KK,KK2,Kura24}. 
An overview of this line of research is given in \cite{KK2}.

The purpose of the present paper is to continue the research by studying provability predicates satisfying the derivability condition $\mathbf{C}$: $T\vdash\PR_T(\gn{\varphi})\land\PR_T(\gn{\psi})\to \PR_T(\gn{\varphi\land\psi})$. 
Its modal counterpart is the axiom scheme $\mathsf{C}$: $\Box A\land\Box B\to\Box(A\land B)$. 
The significance of the condition $\mathbf{C}$ in the context of the second incompleteness theorem was investigated in \cite{Kura25}. 
In particular, the combination of $\mathbf{C}$ with $\mathbf{D3}$ yields a version of the second incompleteness theorem on the unprovability of the schematic consistency statement $\ConS_T : = \{\neg (\PR_T(\gn{\varphi}) \land \PR_T(\gn{\neg \varphi})) \mid \varphi$ is an $\LA$-sentence$\}$. 
Mostowski's predicate $\PR_T^{\mathrm{M}}(x)$ is an example satisfying both $\mathbf{C}$ and $\mathbf{D3}$. 
Hence, it satisfies $T\vdash \neg \PR_T^{\mathrm{M}}(\gn{0=1})$, but $T \nvdash \neg(\PR_T^{\mathrm{M}}(\gn{\varphi})\land \PR_T^{\mathrm{M}}(\gn{\neg\varphi}))$
for some $\LA$-sentence $\varphi$. 
Thus, while the modal principles $\mathsf{P}: \neg\Box\bot$ and $\mathsf{D}: \neg(\Box A\land\Box\neg A)$ are equivalent in normal modal logics, they are not equivalent in general in the non-normal setting. 
Indeed, by the above mentioned version of the second incompleteness theorem, there is no provability predicate $\PR_T(x)$ such that $\PL(\PR_T)$ contains $\CNDF$, whereas, $\PL(\PR_T^{\mathrm{M}})$ contains $\CNPF$. 

One obstacle to studying the principle $\mathsf{C}$ over the logic $\N$ is that the relational semantics introduced by Fitting, Marek, and Truszczy\'nski does not seem to handle $\mathsf{C}$ satisfactorily. 
Therefore, in the present paper, we introduce a simple new semantics suitable for $\mathsf{C}$. 
Our semantics consists of points, but it does not employ an accessibility relation. 
Instead, each point $x$ is assigned a set $G(x)$ of modal formulas, and a fixed closure operator $\clo$ determines the set $\clo(G(x))$ of modal formulas. 
The formula $\Box A$ is true at $x$ exactly when $A\in\clo(G(x))$.
This separation between the generator $G(x)$ and its closure $\clo(G(x))$ is useful for finite countermodel constructions. 
Even when the set of points and all generators $G(x)$ are finite, the set $\clo(G(x))$ may be infinite. 
The modal principles are then reflected by conditions on the closure $\clo(G(x))$. 
For example, $\mathsf{C}$ is true at $x$ if and only if $\clo(G(x))$ is closed under taking conjunction. 

In Section~\ref{Sec3}, we prove completeness theorems with respect to our new semantics for non-normal modal logics having combinations of the modal principles $\mathsf{N}$, $\mathsf{C}$, $\mathsf{P}$, $\mathsf{D}$, and $\mathsf{4}$. 
For logics in which $\mathsf{N}$ and $\mathsf{D}$ are not both contained, we actually prove the finite model property. 
For logics containing both $\mathsf{N}$ and $\mathsf{D}$, we prove completeness by a canonical model construction. 
However, the countermodels obtained in this way are not primitive recursively presented.
For arithmetical completeness, in order to embed countermodels into arithmetic, we need a primitive recursive presentation of the countermodels. 
We therefore prove separately that the set of all theorems of $\CND$ is primitive recursive, and we then prove that if $\CND\nvdash A$, then we can find a primitive recursively presented countermodel for $A$. 

In Section~\ref{Sec4}, we then turn to arithmetical completeness. 
We prove the arithmetical completeness theorems for $\CN$, $\CNP$, $\CNF$, $\CNPF$, and $\CND$ based on our new semantics. 
The proof method is based on the Solovay-style constructions for non-normal provability logics developed in our previous studies (see \cite{Kogure1,Kogure2,KK,KK2,Kura24}).

\section{Preliminaries}\label{Sec2}

In this section, we introduce basic notions and notation used throughout the paper. 

\subsection{Provability predicates and derivability conditions}

Throughout the present paper, $T$ denotes a primitive recursively axiomatized consistent extension of Peano Arithmetic $\PA$ in the language $\LA$ of first-order arithmetic (cf.~H\'ajek and Pudl\'ak \cite{HP}). 
Let $\omega$ denote the set of all natural numbers. 
For each $n\in\omega$, the numeral for $n$ is denoted by $\num{n}$. 
We fix a standard G\"{o}del numbering such that if $\alpha$ is a proper sub-expression of a finite sequence $\beta$ of $\LA$-symbols, 
then the G\"{o}del number of $\alpha$ is less than that of $\beta$. 
For each $\LA$-formula $\varphi$, let $\gn{\varphi}$ denote the numeral for the G\"odel number of $\varphi$.

A $\Sigma_1$ formula $\PR_T(x)$ is called a provability predicate of $T$ if for every $\LA$-sentence $\varphi$, $T\vdash\varphi$ if and only if $\PA\vdash\PR_T(\gn{\varphi})$. 

\begin{defn}
Let $\PR_T(x)$ be a provability predicate of $T$.
\begin{enumerate}
\item $\PR_T(x)$ is said to satisfy $\D{2}$ if $T \vdash \PR_T(\gn{\varphi \to \psi})\to (\PR_T(\gn{\varphi}) \to \PR_T(\gn{\psi}))$ for any $\LA$-sentences $\varphi$ and $\psi$.

\item $\PR_T(x)$ is said to satisfy $\D{3}$ if $\PA\vdash\PR_T(\gn{\varphi})\to \PR_T(\gn{\PR_T(\gn{\varphi})})$ for any $\LA$-sentence $\varphi$.

\item $\PR_T(x)$ is said to satisfy $\mathbf{C}$ if $\PA\vdash\PR_T(\gn{\varphi})\land\PR_T(\gn{\psi})\to \PR_T(\gn{\varphi\land\psi})$ for any $\LA$-sentences $\varphi$ and $\psi$.
\end{enumerate}
\end{defn}

It is easily shown that if $\PR_T(x)$ satisfies $\D{2}$, then it also satisfies $\mathbf{C}$. 

\begin{defn}
Let $\PR_T(x)$ be a provability predicate of $T$.
\begin{enumerate}
\item $\ConL_T$ is the sentence $\neg\PR_T(\gn{0=1})$.

\item $\ConS_T$ is the set
$\{\neg (\PR_T(\gn{\varphi}) \land \PR_T(\gn{\neg\varphi})) \mid \varphi\text{ is an }\LA\text{-sentence}\}$.
\end{enumerate}
\end{defn}

We write $T\vdash\ConS_T$ if every sentence in $\ConS_T$ is provable in $T$. 
If $T\vdash\ConS_T$, then $T\vdash\ConL_T$. 
For these derivability conditions and consistency statements, the second incompleteness theorem of the following form holds. 

\begin{fact}[cf.~\cite{Kura25}]\label{G2}\leavevmode
\begin{enumerate}
    \item There is no $\Sigma_1$ provability predicate $\PR_T(x)$ satisfying $\D{2}$, $\D{3}$, and $T\vdash\ConL_T$.
    \item There is no $\Sigma_1$ provability predicate $\PR_T(x)$ satisfying $\mathbf{C}$, $\D{3}$, and $T\vdash\ConS_T$.
\end{enumerate}
\end{fact}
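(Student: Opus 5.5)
Both parts follow the usual derivation of the second incompleteness theorem from a G\"odel sentence. The only change is in part 2, where $\mathbf{C}$ replaces the full $\D{2}$.

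By the diagonal lemma, fix an $\LA$-sentence $\gamma$ with $\PA\vdash\gamma\leftrightarrow\neg\PR_T(\gn{\gamma})$.

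First, $T\nvdash\gamma$. If $T\vdash\gamma$, then $\PA\vdash\PR_T(\gn{\gamma})$ because $\PR_T$ is a provability predicate, hence $T\vdash\neg\gamma$. This contradicts the consistency of $T$.

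\textbf{Part 1.} Work in $T$ and assume $\PR_T(\gn{\gamma})$.
\begin{enumerate}
\item By $\D{3}$ we get $\PR_T(\gn{\PR_T(\gn{\gamma})})$.
\item Since $T\vdash\gamma\to(\PR_T(\gn{\gamma})\to 0=1)$, $\mathbf{D1}$ gives $\PR_T$ of this implication.
\item Two applications of $\D{2}$ then give $\PR_T(\gn{0=1})$.
\end{enumerate}
Hence $T\vdash\ConL_T\to\neg\PR_T(\gn{\gamma})$, that is, $T\vdash\ConL_T\to\gamma$. So $T\vdash\ConL_T$ would yield $T\vdash\gamma$, which is impossible.

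\textbf{Part 2.} Without $\D{2}$ we cannot do modus ponens under $\PR_T$. We only need to reach a pair $\PR_T(\gn{\varphi})\land\PR_T(\gn{\neg\varphi})$ for a single sentence $\varphi$, so we arrange the G\"odel sentence for that purpose.

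Choose $\varphi :\equiv \PR_T(\gn{\gamma})$ and work in $T$ under the assumption $\PR_T(\gn{\gamma})$.
\begin{enumerate}
\item By $\D{3}$ we have $\PR_T(\gn{\varphi})$.
\item We would like $\PR_T(\gn{\neg\varphi})$, but $\gamma$ is only \emph{provably equivalent} to $\neg\varphi$, not syntactically identical to it. Without $\D{2}$, $\PR_T$ need not respect provable equivalence.
\end{enumerate}

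The fix is to diagonalize so that the equivalence becomes literal. Apply the diagonal lemma in a form producing $\gamma$ such that $\gamma$ is \emph{literally} the sentence $\neg\PR_T(\gn{\gamma})$. Such a strong (self-referential) diagonal lemma is available by the standard Kleene-style construction, using a term for the substitution function and writing $\gamma\equiv\neg\PR_T(t)$ with $t$ a closed term evaluating to $\gn{\gamma}$.

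If the syntactic mismatch between the term $t$ and the numeral $\gn{\gamma}$ causes trouble, I would do one of the following.
\begin{enumerate}
\item Allow $\PR_T$ applied to terms: $\PA$ proves $t=\gn{\gamma}$, so substitution of equals handles it inside $\PA$.
\item Alternatively, use $\mathbf{C}$ with the conjunction $\gamma\land\PR_T(\gn{\gamma})$. Then $\mathbf{C}$ gives $\PR_T(\gn{\gamma\land\PR_T(\gn{\gamma})})$, a provably false sentence, and consistency-type reasoning must be applied to it. This route requires choosing $\varphi$ carefully, which is why I prefer the literal diagonalization.
\end{enumerate}

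With $\gamma\equiv\neg\varphi$ literally, $\PR_T(\gn{\gamma})$ is $\PR_T(\gn{\neg\varphi})$. Together with item 1 this contradicts the instance $\neg(\PR_T(\gn{\varphi})\land\PR_T(\gn{\neg\varphi}))$ of $\ConS_T$. Hence $T\vdash\ConS_T$ implies $T\vdash\neg\PR_T(\gn{\gamma})$, which is $T\vdash\gamma$, again impossible.

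On this route $\mathbf{C}$ is not even needed for part 2. Its role in the paper's statement would instead be to handle the variant where only a provably equivalent G\"odel sentence is available. In that case I would use $\mathbf{C}$ to combine $\PR_T(\gn{\gamma})$ with $\PR_T(\gn{\PR_T(\gn{\gamma})})$ into $\PR_T(\gn{\gamma\land\PR_T(\gn{\gamma})})$ and take $\varphi$ so that this conjunction has the form $\varphi\land\neg\varphi$.

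The main obstacle is exactly this syntactic-identity issue: matching $\neg\varphi$ literally with a provable sentence, in the absence of closure under provable equivalence.
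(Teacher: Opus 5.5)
Part~1 is the standard argument and is correct. Part~2 has a genuine gap: the literal diagonalization does not remove the syntactic mismatch, it only moves it inside a G\"odel number. With $\gamma\equiv\neg\PR_T(t)$ you must take $\varphi:\equiv\PR_T(t)$ to get $\neg\varphi\equiv\gamma$. The instance of $\ConS_T$ you need is then $\neg(\PR_T(\gn{\PR_T(t)})\land\PR_T(\gn{\gamma}))$. But $\D{3}$ is a schema about numerals. Applied to $\gamma$ it gives $\PR_T(\gn{\PR_T(\gn{\gamma})})$, which is $\PR_T$ of the sentence $\PR_T(\gn{\gamma})$, not of $\PR_T(t)$. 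No instance of $\D{3}$ has $\PR_T(t)$ inside, since $t$ is not a numeral.

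Your fix~(1) does not help. $\PA\vdash t=\gn{\gamma}$ gives $\PA\vdash\PR_T(t)\leftrightarrow\PR_T(\gn{\gamma})$. It says nothing about $\PR_T(\gn{\PR_T(t)})$ versus $\PR_T(\gn{\PR_T(\gn{\gamma})})$. These are $\PR_T$ applied to two different numerals, and this is exactly the failure of closure under provable equivalence that you yourself identified. Your write-up silently uses item~1 for the old $\varphi\equiv\PR_T(\gn{\gamma})$ and the $\ConS_T$ pairing for the new $\varphi\equiv\PR_T(t)$. Your conclusion that $\mathbf{C}$ becomes superfluous should have been a warning sign.

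Your fallback~(2) is also aimed the wrong way. Forcing the conjunction into the shape $\varphi\land\neg\varphi$ reintroduces the same literal-identity demand. Even if achieved, $\PR_T(\gn{\varphi\land\neg\varphi})$ contradicts nothing in $\ConS_T$, since $\mathbf{C}$ cannot split a conjunction.

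The missing idea is to let the conjunction itself be the $\varphi$ of the $\ConS_T$-instance, and to get its negation for free from $\D{1}$. Take the ordinary fixed point $\PA\vdash\gamma\leftrightarrow\neg\PR_T(\gn{\gamma})$ and put $\varphi:\equiv\gamma\land\PR_T(\gn{\gamma})$.
\begin{itemize}
\item $\PA\vdash\neg\varphi$, so $\PA\vdash\PR_T(\gn{\neg\varphi})$ because $\PR_T$ is a provability predicate.
\item $\D{3}$ followed by $\mathbf{C}$ gives $\PA\vdash\PR_T(\gn{\gamma})\to\PR_T(\gn{\varphi})$.
\item Hence $T\vdash\ConS_T$ would give $T\vdash\neg\PR_T(\gn{\gamma})$, i.e.\ $T\vdash\gamma$. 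This contradicts your correct observation that $T\nvdash\gamma$.
\end{itemize}
No literal self-reference is needed, and $\mathbf{C}$ is used essentially.
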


On the other hand, as mentioned in the introduction, Mostowski's provability predicate \cite{Mos65} satisfies $\mathbf{C}$, $\D{3}$, and $T\vdash\ConL_T$.

\subsection{Modal logics}

The language of modal propositional logic consists of countably many propositional variables $p_0,p_1,\ldots$, the logical constant $\bot$, the logical connectives $\neg$, $\land$, $\lor$, $\to$, and the modal operator $\Box$. 
The symbols $\top$ and $\leftrightarrow$ are introduced as abbreviations in the usual way. 
Let $\MF$ be the set of all modal formulas. 
For a modal formula $A$, let $\Sub(A)$ be the set of all subformulas of $A$.

A \textit{modal logic} is a set $L\subseteq\MF$ containing all classical tautologies and closed under Modus Ponens and uniform substitution. 
The pure logic of necessitation $\N$ is the smallest modal logic closed under \textsc{Nec}: $\dfrac{A}{\Box A}$.

Let $\mathsf{C}$ and $\mathsf{4}$ denote the axiom schemata $\Box A\land\Box B\to\Box(A\land B)$ and $\Box A\to\Box\Box A$, respectively. 
These axiom schemata correspond to the derivability conditions $\mathbf{C}$ and $\D{3}$, respectively. 
Let $\mathsf{P}$ be the axiom $\neg\Box\bot$, and let $\mathsf{D}$ be the axiom scheme $\neg(\Box A\land\Box\neg A)$. 
These modal principles correspond to the consistency statements $\ConL_T$ and $\ConS_T$, respectively. 
It is easily shown that $\mathsf{D}$ is stronger than $\mathsf{P}$ over the logic $\N$.

For $\Lambda\subseteq\{\mathsf{N},\mathsf{C},\mathsf{P},\mathsf{D},\mathsf{4}\}$, let $L_\Lambda$ be the smallest modal logic satisfying the following requirements: it is closed under \textsc{Nec} if $\mathsf{N}\in\Lambda$, and it contains each of the axiom schemata among $\mathsf{C}$, $\mathsf{P}$, $\mathsf{D}$, and $\mathsf{4}$ if their names belong to $\Lambda$.
In particular, $L_{\{\N\}} = \N$. 
Although we defined a wider class of non-normal modal logics, we mainly study the following logics:
\begin{align*}
\CN   &= L_{\{\mathsf{N},\mathsf{C}\}},&
\CNP  &= L_{\{\mathsf{N},\mathsf{C},\mathsf{P}\}},&
\CND  &= L_{\{\mathsf{N},\mathsf{C},\mathsf{D}\}},\\
\CNF  &= L_{\{\mathsf{N},\mathsf{C},\mathsf{4}\}},&
\CNPF &= L_{\{\mathsf{N},\mathsf{C},\mathsf{P},\mathsf{4}\}},&
\CNDF &= L_{\{\mathsf{N},\mathsf{C},\mathsf{D},\mathsf{4}\}}.
\end{align*}
Notice that every logic $L_{\Lambda}$ is consistent because it is a sublogic of the normal modal logic $\mathsf{KD4}$.

\subsection{Arithmetical interpretations and provability logics}

Let $\PR_T(x)$ be a provability predicate of $T$.  
A mapping $f$ from modal formulas to $\LA$-sentences is called an arithmetical interpretation based on $\PR_T(x)$ if it satisfies the following conditions:
\begin{enumerate}
\item $f(\bot)$ is $0=1$, 
\item $f(A\circ B)$ is $f(A)\circ f(B)$ for $\circ\in\{\land,\lor,\to\}$, 
\item $f(\neg A)$ is $\neg f(A)$, 
\item $f(\Box A)$ is $\PR_T(\gn{f(A)})$.
\end{enumerate}

\begin{defn}
The provability logic $\PL(\PR_T)$ of $\PR_T(x)$ is the set of all modal formulas $A$ such that $T\vdash f(A)$ for all arithmetical interpretations $f$ based on $\PR_T(x)$.
\end{defn}

It is obvious that $\N\subseteq\PL(\PR_T)$ for any provability predicate $\PR_T(x)$.

\begin{prop}\label{prop:sound-arith}
Let $\PR_T(x)$ be a provability predicate of $T$.
\begin{enumerate}
\item $\PR_T(x)$ satisfies $\mathbf{C}$ if and only if $\CN = L_{\{\N, \mathsf{C}\}} \subseteq\PL(\PR_T)$.

\item $\PR_T(x)$ satisfies $\D{3}$ if and only if $L_{\{\N, \mathsf{4}\}} \subseteq\PL(\PR_T)$.

\item $T\vdash\ConL_T$ if and only if $L_{\{\N, \mathsf{P}\}} \subseteq\PL(\PR_T)$.

\item $T\vdash\ConS_T$ if and only if $L_{\{\N, \mathsf{D}\}} \subseteq\PL(\PR_T)$.
\end{enumerate}
\end{prop}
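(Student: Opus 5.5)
Each of the four items has the same structure, so I will argue them uniformly. For the ``if'' directions I instantiate the modal axiom at suitable propositional variables. For $\mathbf{C}$: given sentences $\varphi,\psi$, choose an arithmetical interpretation $f$ with $f(p_0)=\varphi$ and $f(p_1)=\psi$. Then $f(\Box p_0\land\Box p_1\to\Box(p_0\land p_1))$ is literally $\PR_T(\gn{\varphi})\land\PR_T(\gn{\psi})\to\PR_T(\gn{\varphi\land\psi})$, since $f(p_0\land p_1)$ is $\varphi\land\psi$. Hence $T$ proves it. Items 2 and 4 are handled the same way, using $\Box p_0\to\Box\Box p_0$ and $\neg(\Box p_0\land\Box\neg p_0)$. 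Item 3 is immediate because $f(\neg\Box\bot)$ is $\neg\PR_T(\gn{0=1})$. One mismatch needs care: $\mathbf{C}$ and $\D{3}$ are formulated with $\PA\vdash$, while $\PL$ is defined via $T\vdash$. So I must get the provability in $\PA$ from provability in $T$. I would do this using that $\PR_T$ is $\Sigma_1$ and a provability predicate. If $T\vdash\chi$ for the relevant implication $\chi$, then $T\vdash\PR_T(\gn{\chi})$ holds as well; but this alone does not yield $\PA\vdash\chi$. This is the step I expect to be the main obstacle. The cleanest resolution is probably to read the definition of $\mathbf{C}$ and $\D{3}$ as ``$T\vdash$'' (as $\D{2}$ is stated). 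Otherwise I would restrict the claim accordingly, noting that the paper's later arithmetical constructions in fact yield $\PA$-provability.

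For the ``only if'' directions I show that $X=\{A\in\MF \mid T\vdash f(A)$ for all $f\}$, which is $\PL(\PR_T)$, is a modal logic containing the relevant axioms and closed under \textsc{Nec}. Then minimality of $L_\Lambda$ gives the inclusion. First, tautologies are in $X$, since $f$ commutes with the propositional connectives, so $f(A)$ is a propositional instance of a tautology. Second, $X$ is closed under Modus Ponens, because $f(A\to B)$ is $f(A)\to f(B)$. Third, $X$ is closed under uniform substitution: for a substitution $\sigma$ and an interpretation $f$, the map $g$ with $g(p)=f(\sigma(p))$ extends to an interpretation satisfying $g(A)=f(\sigma(A))$, by induction on $A$ (the $\Box$ case uses $g(\Box A)=\PR_T(\gn{g(A)})=\PR_T(\gn{f(\sigma A)})$). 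Fourth, $X$ is closed under \textsc{Nec}: if $T\vdash f(A)$, then $\PA\vdash\PR_T(\gn{f(A)})$ by the definition of provability predicate, so $T\vdash f(\Box A)$.

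Finally, each axiom instance is in $X$. For $\mathsf{C}$: $f(\Box A\land\Box B\to\Box(A\land B))$ is the $\mathbf{C}$-instance for $\varphi=f(A)$ and $\psi=f(B)$, which $\PA$ (hence $T$) proves. Likewise $\mathsf{4}$ follows from $\D{3}$ applied to $f(A)$, and $\mathsf{D}$ from the element of $\ConS_T$ for $\varphi=f(A)$. For $\mathsf{P}$, $f(\neg\Box\bot)=\ConL_T$. This completes all four equivalences, modulo the $\PA$/$T$ issue flagged above.
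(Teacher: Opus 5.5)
Your argument is the routine verification the paper leaves implicit; the proposition is stated there without proof. For the ``if'' halves you instantiate the schemata at propositional variables. For the ``only if'' halves you check that $\PL(\PR_T)$ is a modal logic closed under \textsc{Nec} and containing the axioms. This is complete for items 3 and 4 and for the ``only if'' halves of items 1 and 2. One small precision: since $f(\bot)$ is $0=1$, the image of a tautology is a tautological consequence of $\neg(0=1)$ rather than literally an instance of a tautology. This is harmless because $\PA\vdash\neg(0=1)$.

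The obstacle you flag is genuine, and no cleverer argument removes it. With $\mathbf{C}$ defined by $\PA\vdash$ but $\PL(\PR_T)$ defined by $T\vdash$, the ``if'' half of item 1 can fail for an admissible $T$. Here is a counterexample.
\begin{itemize}
\item Let $T:=\PA+\neg\Con_\PA$ and $\delta:\equiv\neg\Con_\PA$, a $\Sigma_1$ sentence with $T\vdash\delta$.
\item Neither $\Box\bot\to p$ nor $\Box(\Box\bot\to p)\to\Box\bot$ is in $\GL$. Apply Solovay's theorem to the disjunction of their boxes, which is not in $\GL$. This gives a sentence $\theta$ with $\PA\nvdash\delta\to\theta$, so $T\nvdash\theta$. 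It also gives $\PA\nvdash\Prov_\PA(\gn{\delta\to\theta})\to\delta$, that is, $\PA\nvdash\Prov_T(\gn{\theta})\to\delta$.
\item Put $\PR(x):\equiv\Prov_T(x)\land(x\neq\gn{\theta\land\theta}\lor\delta)$. This is a $\Sigma_1$ provability predicate of $T$. It agrees $\PA$-provably with $\Prov_T$ except at $\theta\land\theta$. That sentence is $T$-unprovable, and $\PA\vdash\PR(\gn{\theta\land\theta})$ would make $\Prov_T(\gn{\theta\land\theta})$ true.
\item Since $T\vdash\delta$, we get $T\vdash\forall x(\PR(x)\leftrightarrow\Prov_T(x))$. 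So $T$ proves every $\mathbf{C}$-instance for $\PR$. Your ``only if'' argument needs only $T$-provability of the instances, so it gives $\CN\subseteq\PL(\PR)$.
\item However, the $\mathbf{C}$-instance with $\varphi=\psi=\theta$ is $\PA$-equivalent to $\Prov_T(\gn{\theta})\to\delta$, which $\PA$ does not prove.
\end{itemize}
The same mismatch is present in item 2 for $\D{3}$. The right repair is the one you propose. Either read $\mathbf{C}$ and $\D{3}$ with $T\vdash$, in which case your proof goes through verbatim. Or keep $\PA\vdash$ and assert only the ``only if'' halves, which is all the paper uses later. The corollaries in Section 4 need only that direction, and the constructed predicates are shown to satisfy the $\PA$-versions directly.
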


We have $\CNPF \subseteq \PL(\PR_T^{\mathrm{M}})$ for Mostowski's provability predicate $\PR_T^{\mathrm{M}}(x)$. 
From Fact \ref{G2}.2, $\CNDF$ is not arithmetically sound, that is, there is no $\Sigma_1$ provability predicate $\PR_T(x)$ such that $\CNDF \subseteq \PL(\PR_T)$. 
The main purpose of Section \ref{Sec4} is to prove the arithmetical completeness theorems for $\CN$, $\CNP$, $\CNF$, $\CNPF$, and $\CND$.
That is, for each logic $L$ among them, there exists a $\Sigma_1$ provability predicate $\PR_T(x)$ of $T$ such that $\PL(\PR_T) = L$. 

For textbooks on provability logic, we refer to Boolos \cite{Boolos} and Smory\'nski \cite{Smor85}. 
Comprehensive surveys on provability logic are given by Japaridze and de Jongh~\cite{JD} and Artemov and Beklemishev~\cite{AB}.

\section{A new semantics}\label{Sec3}

In this section, we introduce a new semantics for non-normal modal logics. 
In ordinary Kripke semantics, the truth of $\Box A$ at a world is defined in terms of an accessibility relation. 
In our semantics, each point $x$ is assigned a set $G(x)$ of formulas and a closure operator $\clo$ is fixed on sets of formulas.  A formula $\Box A$ is true at $x$ exactly when $A\in\clo(G(x))$. 
The distinction between the generator $G(x)$ and its closure $\clo(G(x))$ is important below, since $G(x)$ may be finite even if $\clo(G(x))$ is infinite.

\begin{defn}
A \textit{closure operator} is a mapping $\clo:\mathcal{P}(\MF)\to\mathcal{P}(\MF)$ satisfying the following conditions for all $X,Y\subseteq\MF$:
\begin{enumerate}
\item $X\subseteq\clo(X)$, 
\item If $X\subseteq Y$, then $\clo(X)\subseteq \clo(Y)$, 
\item $\clo(\clo(X))=\clo(X)$.
\end{enumerate}
\end{defn}

\begin{defn}
A \textit{closure frame} is a triple $\mathcal{F} = (W,G,\clo)$ where $W$ is a non-empty set, $G: W \to \mathcal{P}(\MF)$, and $\clo$ is a closure operator. 

A closure frame is said to be \textit{finite} if $W$ is finite and $G(x)$ is finite for every $x\in W$.
\end{defn}

\begin{defn}
A \textit{closure model} is a quadruple $\mathcal{M} = (W,G,\clo,\Vdash)$ where $\mathcal{F} = (W,G,\clo)$ is a closure frame and $\Vdash$ is a satisfaction relation between $W$ and $\MF$ satisfying the following clauses:
\begin{enumerate}
\item $x\nVdash\bot$.
\item $x\Vdash \neg A \iff x \nVdash A$.
\item $x\Vdash A \land B \iff$ ($x\Vdash A$ and $x\Vdash B$).
\item $x\Vdash A \lor B \iff$ ($x\Vdash A$ or $x\Vdash B$).
\item $x\Vdash A\to B \iff$ ($x\nVdash A$ or $x\Vdash B$).
\item $x\Vdash\Box A \iff A\in \clo(G(x))$.
\end{enumerate}
A modal formula $A$ is said to be \textit{valid} in a closure model $\mathcal{M}$ (in symbols, $\mathcal{M} \models A$) if $x\Vdash A$ for all $x\in W$. 
Let $\Th(\mathcal{M}) : = \{A \in \MF \mid \mathcal{M} \models A\}$. 
\end{defn}

In the proofs of modal completeness below, countermodels are constructed in different ways depending on $\Lambda\subseteq\{\mathsf{N},\mathsf{C},\mathsf{P},\mathsf{D},\mathsf{4}\}$. 
Some of these countermodels are finite. 
Even if a closure frame is finite, the set $\clo(G(x))$ may be infinite. 
For the proofs of arithmetical completeness in Section~\ref{Sec4}, we need an effective presentation of the relation $B\in\clo(G(x))$ so that arithmetical formulas representing the model can be obtained.

We therefore use the following notion. 
A closure model $\mathcal M=(W,G,\clo,\Vdash)$ is said to be \textit{primitive recursively presented} if the relations $x\in W$, $x\Vdash p$ for propositional variables $p$, and $B\in\clo(G(x))$ are primitive recursive. 
Then the full satisfaction relation $x\Vdash B$ is also primitive recursive, by recursion on the construction of $B$.

\begin{defn}
Let $\mathcal{M}=(W,G,\clo,\Vdash)$ be a closure model. 
We say that $\mathcal{M}$ satisfies:
\begin{enumerate}
\item $(\mathsf{N}) : \iff \forall x \in W \, \forall A \in \MF (\mathcal{M} \models A \Rightarrow A \in \clo(G(x)))$, 

\item $(\mathsf{C}) : \iff \forall x\in W\, \forall A,B \in \MF(A, B \in \clo(G(x)) \Rightarrow A\land B\in \clo(G(x)))$, 

\item $(\mathsf{4}) : \iff \forall x\in W\, \forall A \in \MF (A\in \clo(G(x)) \Rightarrow \Box A\in \clo(G(x)))$, 

\item $(\mathsf{P}) : \iff \forall x \in W (\bot\notin \clo(G(x)))$, 

\item $(\mathsf{D}) : \iff \forall x\in W\, \forall A\in\MF (A\in \clo(G(x)) \Rightarrow \neg A \notin \clo(G(x)))$, 
\end{enumerate}
\end{defn}

\begin{prop}\label{prop:closure-conditions}
Let $\mathcal{M}=(W,G,\clo,\Vdash)$ be a closure model. 
Then $\mathcal{M}$ satisfies $(\mathsf{N})$ if and only if $\Th(\mathcal{M})$ is closed under \textsc{Nec}. 
Moreover, for each $\mathsf X\in\{\mathsf{C},\mathsf{4},\mathsf{D}\}$, the model $\mathcal{M}$ satisfies $(\mathsf X)$ if and only if $\mathsf X\subseteq\Th(\mathcal{M})$, and $\mathcal{M}$ satisfies $(\mathsf{P})$ if and only if $\mathsf{P}\in\Th(\mathcal{M})$.
\end{prop}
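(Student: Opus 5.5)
Each equivalence is proved by unfolding the satisfaction clause $x\Vdash\Box A\iff A\in\clo(G(x))$, which turns every modal axiom into a statement about membership in $\clo(G(x))$. One subtlety governs the whole argument. A closure model has a fixed valuation, so $\Th(\mathcal{M})$ need not be closed under substitution. Hence "$\mathsf{X}\subseteq\Th(\mathcal{M})$" has to be read as: every instance of the scheme $\mathsf{X}$, for arbitrary modal formulas $A,B$, is valid in $\mathcal{M}$. Because the conditions $(\mathsf{C})$, $(\mathsf{4})$, $(\mathsf{D})$ also quantify over arbitrary formulas $A,B\in\MF$, the two sides match instance by instance.

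For $(\mathsf{N})$, suppose first that $\mathcal{M}$ satisfies $(\mathsf{N})$ and $A\in\Th(\mathcal{M})$. Then $A\in\clo(G(x))$ for every $x\in W$, so $x\Vdash\Box A$ everywhere and $\Box A\in\Th(\mathcal{M})$. Conversely, if $\Th(\mathcal{M})$ is closed under \textsc{Nec} and $\mathcal{M}\models A$, then $\mathcal{M}\models\Box A$, so $A\in\clo(G(x))$ for every $x$.

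For $(\mathsf{C})$, fix $x$ and formulas $A,B$. The instance $\Box A\land\Box B\to\Box(A\land B)$ holds at $x$ exactly when $A,B\in\clo(G(x))$ implies $A\land B\in\clo(G(x))$. Quantifying over all $x$, $A$, $B$ gives the equivalence in both directions at once.

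The cases $(\mathsf{4})$ and $(\mathsf{D})$ go the same way. The instance $\Box A\to\Box\Box A$ holds at $x$ iff $A\in\clo(G(x))$ implies $\Box A\in\clo(G(x))$. The instance $\neg(\Box A\land\Box\neg A)$ holds at $x$ iff $A$ and $\neg A$ are not both in $\clo(G(x))$, which is the condition $A\in\clo(G(x))\Rightarrow\neg A\notin\clo(G(x))$. For $(\mathsf{P})$ there is no scheme, only the single formula $\neg\Box\bot$. It holds at $x$ iff $\bot\notin\clo(G(x))$, and validity means this holds for every $x$.

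None of this uses the closure operator axioms. The only point needing care is the reading of $\mathsf{X}\subseteq\Th(\mathcal{M})$ as validity of all instances, which must be stated explicitly. No other step presents any difficulty.
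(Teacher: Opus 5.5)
Your argument is correct and is exactly the intended one. The paper states this proposition without proof, treating it as an immediate consequence of the clause $x\Vdash\Box A\iff A\in\clo(G(x))$, which you unfold instance by instance. Your reading of $\mathsf{X}\subseteq\Th(\mathcal{M})$ as validity of every instance is simply the literal meaning, since $\mathsf{X}$ names the set of all its instances, and it fits the paper's subsequent remark that $\Th(\mathcal{M})$ need not be closed under substitution.
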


In general, the set $\Th(\mathcal{M})$ of formulas is not closed under uniform substitution. 
However, our logics $L_\Lambda$ are defined by axiom schemata, and hence uniform substitution is available as an admissible rule even if it is not equipped as an explicit rule. 
This gives the following soundness result.

\begin{prop}
Let $\Lambda\subseteq\{\mathsf{N},\mathsf{C},\mathsf{P},\mathsf{D},\mathsf{4}\}$. 
If a closure model $\mathcal{M}$ satisfies $(\mathsf{X})$ for all $\mathsf{X} \in \Lambda$, then $L_\Lambda \subseteq \Th(\mathcal{M})$.
\end{prop}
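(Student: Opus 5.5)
The plan is to show that $\Th(\mathcal{M})$ contains every member of $L_\Lambda$ without proving that $\Th(\mathcal{M})$ is closed under uniform substitution. Instead, I will use an axiomatic presentation of $L_\Lambda$ and induct on derivations.

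First fix the Hilbert-style system $H_\Lambda$. Its axioms are all instances of propositional tautologies (arbitrary substitution instances in $\MF$), together with all instances of those schemata among $\mathsf{C}$, $\mathsf{D}$, $\mathsf{4}$ whose names lie in $\Lambda$, and the axiom $\neg\Box\bot$ if $\mathsf{P}\in\Lambda$. Its rules are Modus Ponens, plus \textsc{Nec} if $\mathsf{N}\in\Lambda$. The set of $H_\Lambda$-theorems is closed under uniform substitution: by induction on derivations, applying a substitution $\sigma$ to every line turns an instance of a tautology or schema into another instance ($\neg\Box\bot$ is constant), and $\sigma$ commutes with MP and \textsc{Nec}. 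Hence the $H_\Lambda$-theorems form a modal logic meeting the requirements in the definition of $L_\Lambda$, so $L_\Lambda$ is contained in the set of $H_\Lambda$-theorems. The reverse inclusion is clear, though it is not needed.

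Next I will show by induction on $H_\Lambda$-derivations that every theorem belongs to $\Th(\mathcal{M})$.
\begin{enumerate}
\item Tautology instances are true at every $x$. The satisfaction clauses for the propositional connectives are classical, and the formulas $\Box B$ behave as atoms whose truth values are fixed by $B\in\clo(G(x))$.
\item Instances of $\mathsf{C}$, $\mathsf{4}$, $\mathsf{D}$ are valid, and $\neg\Box\bot$ is valid when $\mathsf{P}\in\Lambda$. This follows from the assumed conditions $(\mathsf{X})$ via Proposition~\ref{prop:closure-conditions}.
\item Validity in $\mathcal{M}$ is preserved by Modus Ponens, because it is pointwise truth at every $x$.
\item If $\mathsf{N}\in\Lambda$, then $(\mathsf{N})$ holds, so $\Th(\mathcal{M})$ is closed under \textsc{Nec} by Proposition~\ref{prop:closure-conditions}.
\end{enumerate}

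The only delicate point is the substitution issue mentioned before the statement: $\Th(\mathcal{M})$ itself need not be closed under substitution. The first step handles this by showing substitution is admissible in the schematic system, so the soundness induction only has to cover axioms and the rules MP and \textsc{Nec}.
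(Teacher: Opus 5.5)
Your proposal is correct and follows the paper's route. The paper likewise observes that $\Th(\mathcal{M})$ need not be closed under substitution, but that substitution is admissible because $L_\Lambda$ is axiomatized by schemata, and then obtains soundness from the validity of the axioms and Proposition~\ref{prop:closure-conditions}; you have written out in full the Hilbert-system induction that the paper leaves as a one-line remark.
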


In the following subsections, we prove completeness results with respect to our
new semantics. 
We first study logics in which $\mathsf{N}$ and $\mathsf{D}$ are not both contained. 
In this case, we develop methods for constructing primitive recursively presented finite countermodels.
We then study logics containing both $\mathsf{N}$ and $\mathsf{D}$. 
We prove the completeness theorems for these logics by using a canonical model construction with a different closure operator. 
The models obtained in this way are not primitive recursively presented. 
Therefore, for the arithmetical completeness theorem for $\CND$, we separately prove that $\CND$ has a primitive recursively presented countermodel.

\subsection{Finite model property for logics in which $\mathsf{N}$ and $\mathsf{D}$ are not both contained}

In this subsection, let $\Lambda\subseteq\{\mathsf{N},\mathsf{C},\mathsf{P}, \mathsf{D}, \mathsf{4}\}$ and assume $\{\mathsf{N}, \mathsf{D}\} \nsubseteq \Lambda$. 
For each modal formula $B$, let ${\sim}B$ be $C$ if $B$ is of the form $\neg C$ and ${\sim}B \equiv \neg B$ otherwise. 

\begin{defn}
    Let $A$ be a modal formula. 
\begin{enumerate}
    \item $\Sub^\Box(A) : = \Sub(A)\cup\{\Box B\mid B\in\Sub(A)\} \cup \{\bot, \Box \bot\}$. 

    \item $\Sub^\ast(A) : = \Sub^{\Box}(A)\cup\{{\sim}B\mid B\in\Sub^{\Box}(A)\}$.

    \item For $X\subseteq\MF$, $\clo_{A}(X) : = X \cup \{B\in\MF \mid \Box B\notin\Sub^\Box(A)\}$. 
\end{enumerate}
\end{defn}

It is easily shown that $\clo_A$ is a closure operator. 
We are ready to prove our first completeness result for logics without $\mathsf{D}$.

\begin{thm}\label{compl_withoutD}
Let $\Lambda\subseteq\{\mathsf{N},\mathsf{C},\mathsf{P},\mathsf{4}\}$. 
If $L_\Lambda\nvdash A$, then there exists a finite closure model $\mathcal{M} = (W, G, \clo_A, \Vdash)$ satisfying the conditions $(\mathsf{X})$ for all $\mathsf{X} \in \Lambda$ in which $A$ is not valid.
\end{thm}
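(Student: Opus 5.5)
The plan is a filtration-style canonical construction over the finite set $\Sub^\ast(A)$, using the closure operator $\clo_A$ so that boxes outside $\Sub^\Box(A)$ are simply forced true. The key steps are: define the worlds and generators, prove a truth lemma, and then check each condition $(\mathsf X)$ for $\mathsf X\in\Lambda$ using the corresponding axiom or rule of $L_\Lambda$.

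\emph{Worlds and generators.} Let $W$ be the set of all $L_\Lambda$-consistent sets $x\subseteq\Sub^\ast(A)$ that are maximal in the sense that for every $B\in\Sub^\Box(A)$, either $B\in x$ or ${\sim}B\in x$. Since $\Sub^\ast(A)$ is finite, $W$ is finite. Because $L_\Lambda\nvdash A$, the set $\{{\sim}A\}$ is consistent, and the usual Lindenbaum argument inside the finite set $\Sub^\ast(A)$ extends it to some $x_0\in W$. Put
\[
G(x):=\{B\mid \Box B\in x\},
\]
which is finite, and let $x\Vdash p$ iff $p\in x$. Note that $\Box B\in x$ forces $\Box B\in\Sub^\Box(A)$, so $B\in\Sub(A)\cup\{\bot\}$.

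\emph{Truth lemma.} For every $B\in\Sub^\Box(A)$ we want $x\Vdash B\iff B\in x$. This is proved by induction on $B$, with the Boolean cases handled by maximality and consistency. The only new case is $\Box B\in\Sub^\Box(A)$. By definition of $\clo_A$, and because $\Box B\in\Sub^\Box(A)$, we have $B\in\clo_A(G(x))$ iff $B\in G(x)$ iff $\Box B\in x$. The truth lemma then gives $x_0\nVdash A$.

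\emph{Verifying the conditions.} In each case the membership $\cdot\in\clo_A(G(x))$ holds trivially unless the relevant boxed formula lies in $\Sub^\Box(A)$, so only that case needs an argument.
\begin{itemize}
\item[(N)] Suppose $\mathcal M\models B$ with $\Box B\in\Sub^\Box(A)$, so $B\in\Sub(A)\cup\{\bot\}$. By the truth lemma $B$ belongs to every $x\in W$, and $B\neq\bot$. If $L_\Lambda\nvdash B$, then $\{{\sim}B\}$ is consistent and extends to some world omitting $B$, a contradiction. So $L_\Lambda\vdash B$, hence $L_\Lambda\vdash\Box B$ by \textsc{Nec}. Therefore $\Box B\in x$ for every $x$, i.e.\ $B\in G(x)$.
\item[(C)] If $\Box(B\land C)\in\Sub^\Box(A)$, then $B\land C\in\Sub(A)$, so $\Box B,\Box C\in\Sub^\Box(A)$. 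From $B,C\in\clo_A(G(x))$ we get $\Box B,\Box C\in x$. Axiom $\mathsf C$ and maximality then give $\Box(B\land C)\in x$.
\item[(4)] If $\Box\Box B\in\Sub^\Box(A)$, then $\Box B\in\Sub(A)$, so $B\in G(x)$ yields $\Box B\in x$. Axiom $\mathsf 4$ then gives $\Box\Box B\in x$.
\item[(P)] Since $\Box\bot\in\Sub^\Box(A)$, we have $\bot\in\clo_A(G(x))$ iff $\Box\bot\in x$. The latter is excluded by the axiom $\mathsf P$ and consistency.
\end{itemize}
With all conditions for $\mathsf X\in\Lambda$ verified and $x_0\nVdash A$, the model $(W,G,\clo_A,\Vdash)$ is the required finite countermodel.

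The main obstacle I expect is bookkeeping. The set $\Sub^\Box(A)$ must be closed enough for the subformula and ${\sim}$ arguments to go through, in particular in the (N) case, where consistency of $\{{\sim}B\}$ and its extension to a world must stay inside $\Sub^\ast(A)$. In the (4) case, $\Box\Box B\in\Sub^\Box(A)$ could arise from $\Box B\in\Sub(A)$ or from $\Box B=\bot$; the latter is impossible. (The $\mathsf D$ condition is not needed here, since $\mathsf D$ is absent from $\Lambda$.)
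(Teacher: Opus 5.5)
Your proposal is correct and follows the paper's proof essentially step for step. It uses the same worlds (consistent subsets of $\Sub^\ast(A)$ deciding each formula up to ${\sim}$), the same generators $G(x)=\{B\mid\Box B\in x\}$, and the same immediate $\Box$-case of the truth lemma via $\clo_A$. The conditions $(\mathsf N)$, $(\mathsf C)$, $(\mathsf 4)$, $(\mathsf P)$ are checked the same way, by splitting on whether the relevant boxed formula lies in $\Sub^\Box(A)$.

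Requiring maximality only for $B\in\Sub^\Box(A)$ is harmless: for subsets of $\Sub^\ast(A)$ this gives exactly the paper's $\Sub^\ast(A)$-maximal sets, and a truth lemma on $\Sub^\Box(A)$ covers every use of it.
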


\begin{proof}
Suppose that $L_\Lambda\nvdash A$. 
We say that $X \subseteq \Sub^\ast(A)$ is \textit{$L_\Lambda$-consistent} if $L_{\Lambda} \nvdash \bigwedge X \to \bot$. 
$X$ is said to be \textit{$\Sub^\ast(A)$-maximally $L_\Lambda$-consistent} if $X$ is $L_\Lambda$-consistent and for every $B\in\Sub^\ast(A)$, either $B\in X$ or ${\sim} B\in X$. 
It is shown that every $L_{\Lambda}$-consistent set $X \subseteq \Sub^\ast(A)$ can be extended to a $\Sub^\ast(A)$-maximally $L_{\Lambda}$-consistent set. 

We define a quadruple $\mathcal{M}_A = (W, G, \clo_A, \Vdash)$ as follows: 
\begin{itemize}
    \item $W : = \{X \subseteq \Sub^\ast(A) \mid X$ is $\Sub^\ast(A)$-maximally $L_\Lambda$-consistent$\}$. 

    \item For $X\in W$, let $G(X):=\{B\in\Sub^\ast(A) \mid \Box B\in X\}$. 
    
    \item For $X\in W$ and each propositional variable $p$, $X \Vdash p : \iff p\in X$.
\end{itemize}

Since $L_\Lambda$ is consistent, the empty set $\emptyset$ is $L_{\Lambda}$-consistent.
Then, there exists a $\Sub^\ast(A)$-maximally $L_{\Lambda}$-consistent set. 
Thus, $W$ is non-empty. 

\begin{lem}[Truth Lemma]
For every $X\in W$ and every $B\in\Sub^\ast(A)$, we have that $X\Vdash B$ if and only if $B\in X$.
\end{lem}

\begin{proof}
We prove the lemma by induction on the construction of $B$. 
We describe only the case $B=\Box C$. 
By the definition, $X \Vdash \Box C$ is equivalent to $C \in \clo_A(G(X)) = G(X) \cup \{D \in \MF \mid \Box D \notin \Sub^{\Box}(A)\}$. 
Since $\Box C \in \Sub^{\Box}(A)$, we get that $C \in \clo_A(G(X))$ is equivalent to $C \in G(X)$, and this is equivalent to $\Box C \in X$.  
\end{proof}

\begin{lem}
The model $\mathcal{M}_A$ satisfies the conditions $(\mathsf{X})$ for all $\mathsf{X} \in \Lambda$.
\end{lem}

\begin{proof}
$(\N)$: Suppose $\mathsf{N}\in\Lambda$ and $B$ is valid in $\mathcal{M}_A$. 
We prove $B\in \clo_A(G(X))$ for all $X\in W$.
If $\Box B\notin\Sub^\Box(A)$, then $B\in \clo_A(G(X))$ by the definition of $\clo_{A}$. 
If $\Box B\in\Sub^\Box(A)$, then by the Truth Lemma, $\mathcal{M}_A \models B$ implies that $B\in X$ for all $X\in W$. 
This means that $\{{\sim}B\}$ is not $L_{\Lambda}$-consistent, that is, $B \in L_{\Lambda}$. 
By \textsc{Nec}, $\Box B \in L_\Lambda$. 
Thus $\Box B \in X$ for all $X\in W$. 
Then, $B\in G(X) \subseteq \clo_A(G(X))$ for all $X\in W$.

\medskip

$(\mathsf{P})$: Assume $\mathsf{P}\in\Lambda$. 
Suppose, towards a contradiction, that $\bot\in\clo_A(G(X))$. 
Since $\Box\bot\in\Sub^\Box(A)$ by the definition, $\bot\in G(X)$, and so $\Box\bot\in X$. 
This contradicts the axiom $\neg\Box\bot\in L_\Lambda$ and the $L_\Lambda$-consistency of $X$. 
Therefore $\bot\notin\clo_A(G(X))$.

\medskip

$(\mathsf{C})$: Suppose $\mathsf{C}\in\Lambda$ and $B, C\in\clo_A(G(X))$. 
We show that $B\land C\in\clo_A(G(X))$.
If $\Box(B\land C)\notin\Sub^\Box(A)$, then $B\land C\in\clo_A(G(X))$ by the definition of $\clo_A$.
Suppose $\Box(B\land C)\in\Sub^\Box(A)$. 
Then, $B,C\in\Sub(A)$, and hence $\Box B,\Box C\in\Sub^\Box(A)$. 
Since $B,C\in\clo_A(G(X))$, we have $B,C\in G(X)$. 
Thus $\Box B,\Box C\in X$. 
Since $L_\Lambda$ contains the axiom $\Box B\land\Box C\to\Box(B\land C)$, we obtain $\Box(B\land C)\in X$. 
Hence $B\land C\in G(X)\subseteq\clo_A(G(X))$.

\medskip

$(\mathsf{4})$: Suppose $\mathsf{4}\in\Lambda$. 
Let $B\in\clo_A(G(X))$. 
We show that $\Box B\in\clo_A(G(X))$.
If $\Box\Box B\notin\Sub^\Box(A)$, then $\Box B\in\clo_A(G(X))$ by the definition of $\clo_A$.
Suppose $\Box\Box B\in\Sub^\Box(A)$. 
Then $\Box B\in\Sub(A)$, and hence $\Box B\in\Sub^\Box(A)$. 
Thus $B\in G(X)$, so $\Box B\in X$. 
Since $L_\Lambda$ contains the axiom $\Box B\to\Box\Box B$, we obtain $\Box\Box B\in X$. 
Hence $\Box B\in G(X)\subseteq\clo_A(G(X))$.
\end{proof}

Since $L_{\Lambda} \nvdash A$, we have that $\{{\sim}A\}$ is $L_{\Lambda}$-consistent. 
Then, we find a $\Sub^\ast(A)$-maximally $L_{\Lambda}$-consistent set $X_A \in W$ such that $A \notin X_A$. 
By the Truth Lemma, $X_A \nVdash A$. 
Thus $\mathcal{M}_A$ is a required finite countermodel.
\end{proof}

When $\mathsf{N}\notin\Lambda$, it suffices in the proof above to take $W=\{X_A\}$. 
This is because the full set $W$ is used only to verify $(\mathsf{N})$.

We next study the case where $\mathsf{D}\in\Lambda$ and $\mathsf{N}\notin\Lambda$. 
In this case the closure operator $\clo_A$ is clearly no longer available because it violates the condition $(\mathsf{D})$. 
Instead, since $(\mathsf{N})$ is not required, we use a closure operator generated by the clauses corresponding to $\mathsf{C}$ and $\mathsf{4}$.

Let $\Lambda\subseteq\{\mathsf{C},\mathsf{P},\mathsf{D},\mathsf{4}\}$ with $\mathsf{D}\in\Lambda$. 
We define our closure operator $\clo_\Lambda$ as follows. 
For $X\subseteq\MF$, we define a sequence $\{X_n^\Lambda\}_{n\in\omega}$ of sets of modal formulas as follows. 
Let $X_0^\Lambda:=X$. 
Let $X_{n+1}^\Lambda$ be the union of $X_n^\Lambda$ and the following sets:
\begin{itemize}
\item $\{B\land C\mid B,C\in X_n^\Lambda\}$ if $\mathsf{C}\in\Lambda$;
\item $\{\Box B\mid B\in X_n^\Lambda\}$ if $\mathsf{4}\in\Lambda$.
\end{itemize}
Finally, we define $\clo_\Lambda(X):=\bigcup_{n\in\omega}X_n^\Lambda$.
It is easily verified that $\clo_\Lambda$ is a closure operator.

\begin{thm}\label{compl_D_withoutN}
Let $\Lambda\subseteq\{\mathsf{C},\mathsf{P},\mathsf{D},\mathsf{4}\}$ with $\mathsf{D}\in\Lambda$. 
If $L_\Lambda\nvdash A$, then there exists a finite closure model $\mathcal{M}=(W,G,\clo_\Lambda,\Vdash)$ satisfying the conditions $(\mathsf{X})$ for all $\mathsf{X}\in\Lambda$ in which $A$ is not valid.
\end{thm}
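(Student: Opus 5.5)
The plan is to take a one-point model: since $\mathsf{N}\notin\Lambda$, the full set $W$ of maximal sets is not needed. Suppose $L_\Lambda\nvdash A$. As in the proof of Theorem~\ref{compl_withoutD}, extend $\{{\sim}A\}$ to a $\Sub^\ast(A)$-maximally $L_\Lambda$-consistent set $X_A$. Then define
\begin{itemize}
\item $W:=\{X_A\}$;
\item $G(X_A):=\{B \mid \Box B\in X_A\}$, a finite set since $X_A\subseteq\Sub^\ast(A)$;
\item $X_A\Vdash p :\iff p\in X_A$;
\item the closure operator is $\clo_\Lambda$.
\end{itemize}
Conditions $(\mathsf{C})$ and $(\mathsf{4})$, whenever $\mathsf{C}$ or $\mathsf{4}$ is in $\Lambda$, then hold by the very construction of $\clo_\Lambda$: $\clo_\Lambda(X)$ is the union of the increasing chain $X^\Lambda_n$, so it is closed under $\land$ (resp.\ $\Box$) whenever the corresponding clause is present.

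The key step is an auxiliary claim. For every $D\in\clo_\Lambda(G(X_A))$ we have $L_\Lambda\vdash\bigwedge X_A\to\Box D$. I would prove it by induction on the least $n$ with $D\in G(X_A)^\Lambda_n$.
\begin{itemize}
\item For $n=0$, $\Box D\in X_A$, so the claim is trivial.
\item If $D=B\land C$ with $B,C$ at stage $n$ (only when $\mathsf{C}\in\Lambda$), use the induction hypothesis for $B$ and $C$ together with the axiom $\Box B\land\Box C\to\Box(B\land C)$.
\item If $D=\Box B$ (only when $\mathsf{4}\in\Lambda$), use the induction hypothesis for $B$ together with the axiom $\Box B\to\Box\Box B$.
\end{itemize}

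With the claim, the Truth Lemma goes through for $B\in\Sub^\ast(A)$ by induction; only the case $B=\Box C$ is nontrivial.
\begin{itemize}
\item If $\Box C\in X_A$, then $C\in G(X_A)\subseteq\clo_\Lambda(G(X_A))$.
\item Conversely, suppose $C\in\clo_\Lambda(G(X_A))$ but $\Box C\notin X_A$. By maximality ${\sim}\Box C=\neg\Box C\in X_A$. The claim gives $L_\Lambda\vdash\bigwedge X_A\to\Box C$, which contradicts the $L_\Lambda$-consistency of $X_A$.
\end{itemize}
Hence $X_A\nVdash A$.

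For $(\mathsf{D})$: if both $D$ and $\neg D$ lie in $\clo_\Lambda(G(X_A))$, the claim yields $L_\Lambda\vdash\bigwedge X_A\to\Box D\land\Box\neg D$. This contradicts the axiom $\mathsf{D}$ and the consistency of $X_A$. For $(\mathsf{P})$ when $\mathsf{P}\in\Lambda$: $\bot\in\clo_\Lambda(G(X_A))$ would give $L_\Lambda\vdash\bigwedge X_A\to\Box\bot$, contradicting the axiom $\neg\Box\bot$. Here I would use $\mathsf{P}$ itself, since without $\mathsf{N}$ the principle $\mathsf{D}$ need not yield $\mathsf{P}$.

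The main obstacle is that $\clo_\Lambda(G(X_A))$ is infinite and contains formulas far outside $\Sub^\ast(A)$, so membership cannot be read off $X_A$ directly as in Theorem~\ref{compl_withoutD}. The auxiliary claim, proved by induction on the generation stage, is exactly what controls this. Once it is established, the Truth Lemma and the verification of $(\mathsf{D})$ are routine.
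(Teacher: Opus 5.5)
Your proof is correct and follows the same route as the paper: a one-point model on $X_A$ with closure operator $\clo_\Lambda$, an auxiliary lemma proved by induction on the stage $n$ of $G(X_A)^\Lambda_n$, and then the Truth Lemma together with the checks of $(\mathsf{P})$ and $(\mathsf{D})$ via that lemma. Your formulation of the auxiliary claim, $L_\Lambda\vdash\bigwedge X_A\to\Box D$, is in fact the correct one. The paper's Lemma~\ref{DwithoutN_Box_Lem} instead asserts $\Box B\in X_A$ for every $B\in\clo_\Lambda(G(X_A))$, which fails whenever $\Box B\notin\Sub^\ast(A)$ (e.g.\ $B\land B$ for $B\in G(X_A)$ when $\mathsf{C}\in\Lambda$). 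Your version, used with $\Sub^\ast(A)$-maximality in the Truth Lemma and with the consistency of $X_A$ for $(\mathsf{D})$ and $(\mathsf{P})$, is what makes the argument go through.
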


\begin{proof}
Suppose $L_\Lambda\nvdash A$. 
Then, we find a $\Sub^\ast(A)$-maximally $L_\Lambda$-consistent set $X_A$ such that $A\notin X_A$. 
We define a finite closure model $\mathcal{M}_A = (W,G,\clo_\Lambda,\Vdash)$ as follows:
\begin{itemize}
\item $W:=\{X_A\}$.

\item Let $G(X_A):=\{B\in\Sub^\ast(A)\mid \Box B\in X_A\}$.

\item For each propositional variable $p$, we define $X_A \Vdash p:\iff p\in X_A$.
\end{itemize}

\begin{lem}\label{DwithoutN_Box_Lem}
For every $B\in\MF$, if $B\in\clo_\Lambda(G(X_A))$, then $\Box B\in X_A$.
\end{lem}

\begin{proof}
We prove the following statement by induction on $n$: for any modal formula $B$, if $B\in G(X_A)_n^\Lambda$, then $\Box B\in X_A$.

If $n=0$, then $B\in G(X_A)$, and hence $\Box B\in X_A$ by the definition of $G(X_A)$.

Suppose that the statement holds for $n$ and that $B\in G(X_A)_{n+1}^\Lambda$. 
If $B\in G(X_A)_n^\Lambda$, then we are done by the induction hypothesis.

If $B$ is introduced by the $\land$-clause, then $B$ is of the form $C\land D$ for some $C,D\in G(X_A)_n^\Lambda$. 
By the induction hypothesis, $\Box C,\Box D\in X_A$. 
Since $\mathsf{C}\in\Lambda$, the axiom $\Box C\land\Box D\to\Box(C\land D)$ belongs to $L_\Lambda$. 
Hence $\Box(C\land D)\in X_A$.

If $B$ is introduced by the $\Box$-clause, then $B$ is of the form $\Box C$ for some $C\in G(X_A)_n^\Lambda$. 
By the induction hypothesis, $\Box C\in X_A$. 
Since $\mathsf{4}\in\Lambda$, the axiom $\Box C\to\Box\Box C$ belongs to $L_\Lambda$. 
Hence $\Box\Box C\in X_A$.
\end{proof}

\begin{lem}[Truth Lemma]\label{DwithoutN_Truth_Lem}
For every $B\in\Sub^\ast(A)$, we have $X_A\Vdash B$ if and only if $B\in X_A$.
\end{lem}

\begin{proof}
We prove the lemma by induction on the construction of $B$. 
The cases of Boolean connectives are standard. 
We only describe the case $B=\Box C$.

By the definition of satisfaction, $X_A\Vdash\Box C$ is equivalent to $C\in\clo_\Lambda(G(X_A))$. 
If $C\in\clo_\Lambda(G(X_A))$, then $\Box C\in X_A$ by Lemma~\ref{DwithoutN_Box_Lem}. 
Conversely, if $\Box C\in X_A$, then $C\in G(X_A)\subseteq\clo_\Lambda(G(X_A))$.
\end{proof}

\begin{lem}\label{DwithoutN_conditions_Lem}
The model $\mathcal{M}_A$ satisfies the conditions $(\mathsf{X})$ for all $\mathsf{X}\in\Lambda$.
\end{lem}

\begin{proof}
$(\mathsf{P})$: Assume $\mathsf{P}\in\Lambda$. 
Suppose, towards a contradiction, that $\bot\in\clo_\Lambda(G(X_A))$. 
Then $\Box\bot\in X_A$ by Lemma~\ref{DwithoutN_Box_Lem}. 
This contradicts the axiom $\neg\Box\bot\in L_\Lambda$ and the $L_\Lambda$-consistency of $X_A$.
Therefore $\bot\notin\clo_\Lambda(G(X_A))$. 

\medskip

$(\mathsf{C})$: Suppose $\mathsf{C}\in\Lambda$ and $B,C\in\clo_\Lambda(G(X_A))$. 
Then $B\land C\in\clo_\Lambda(G(X_A))$ by the $\land$-clause in the definition of $\clo_\Lambda$.

\medskip

$(\mathsf{4})$: Suppose $\mathsf{4}\in\Lambda$ and $B\in\clo_\Lambda(G(X_A))$. 
Then $\Box B\in\clo_\Lambda(G(X_A))$ by the $\Box$-clause in the definition of $\clo_\Lambda$.

\medskip

$(\mathsf{D})$: Suppose, towards a contradiction, that $B,\neg B\in\clo_\Lambda(G(X_A))$. 
Then $\Box B,\Box\neg B\in X_A$ by Lemma~\ref{DwithoutN_Box_Lem}. 
This contradicts the axiom $\neg(\Box B\land\Box\neg B)\in L_\Lambda$ and the $L_\Lambda$-consistency of $X_A$.
Therefore $\clo_\Lambda(G(X_A))$ does not contain both $B$ and $\neg B$. 
\end{proof}

By the Truth Lemma, $X_A\nVdash A$. 
Thus $\mathcal{M}_A$ is a required finite countermodel.
\end{proof}

Since the relations $B \in \clo_A(X)$ in the proof of Theorem \ref{compl_withoutD} and $B \in \clo_{\Lambda}(X_A)$ in the proof of Theorem \ref{compl_D_withoutN} are primitive recursive, our finite countermodels $\mathcal{M}_A$ are actually primitive recursively presented. 
Hence, we conclude the following theorem.

\begin{thm}\label{fmp}
Let $\Lambda\subseteq\{\mathsf{N},\mathsf{C},\mathsf{P}, \mathsf{D}, \mathsf{4}\}$ and assume $\{\mathsf{N}, \mathsf{D}\} \nsubseteq \Lambda$. 
For any modal formula $A$, the following are equivalent: 
\begin{enumerate}
    \item $L_{\Lambda} \vdash A$. 
    \item $A$ is valid in all closure models satisfying the conditions $(\mathsf{X})$ for all $\mathsf{X} \in \Lambda$. 
    \item $A$ is valid in all primitive recursively presented finite closure models satisfying the conditions $(\mathsf{X})$ for all $\mathsf{X} \in \Lambda$. 
\end{enumerate}
\end{thm}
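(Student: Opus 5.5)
We prove the cycle $1\Rightarrow 2\Rightarrow 3\Rightarrow 1$.

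For $1\Rightarrow 2$, we invoke the soundness proposition: if a closure model $\mathcal{M}$ satisfies $(\mathsf{X})$ for all $\mathsf{X}\in\Lambda$, then $L_\Lambda\subseteq\Th(\mathcal{M})$. Hence every theorem of $L_\Lambda$ is valid in $\mathcal{M}$. The step $2\Rightarrow 3$ is immediate, because primitive recursively presented finite closure models are a special case of closure models.

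For $3\Rightarrow 1$, we argue by contraposition. Assume $L_\Lambda\nvdash A$, and split into cases on whether $\mathsf{D}\in\Lambda$.

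\emph{Case $\mathsf{D}\notin\Lambda$.} Then $\Lambda\subseteq\{\mathsf{N},\mathsf{C},\mathsf{P},\mathsf{4}\}$, and Theorem~\ref{compl_withoutD} gives a finite closure model $\mathcal{M}_A=(W,G,\clo_A,\Vdash)$ satisfying the required conditions in which $A$ fails.

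\emph{Case $\mathsf{D}\in\Lambda$.} The hypothesis $\{\mathsf{N},\mathsf{D}\}\nsubseteq\Lambda$ forces $\mathsf{N}\notin\Lambda$, so $\Lambda\subseteq\{\mathsf{C},\mathsf{P},\mathsf{D},\mathsf{4}\}$. Theorem~\ref{compl_D_withoutN} then gives a one-point finite model with closure operator $\clo_\Lambda$ in which $A$ fails.

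It remains to check that these models are primitive recursively presented. This is the only step needing real argument, though it is still routine.

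The easy parts come first. In both cases $W$ is a finite set of finite subsets of the finite set $\Sub^\ast(A)$. So membership in $W$, the valuation $X\Vdash p\iff p\in X$, and the generators $G(X)$ are all finite data, hence primitive recursive; we may code them outright.

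In the first case, $B\in\clo_A(G(X))$ holds iff $B\in G(X)$ or $\Box B\notin\Sub^\Box(A)$. Both disjuncts are decidable by a bounded search over the finite set $\Sub^\Box(A)$.

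In the second case we must decide $B\in\clo_\Lambda(G(X_A))$. Every clause of $\clo_\Lambda$ produces a formula strictly longer than its inputs. So $B\in\clo_\Lambda(G)$ can be decided by recursion on the construction of $B$: $B$ belongs iff one of the following holds.
\begin{itemize}
\item $B\in G$.
\item $\mathsf{C}\in\Lambda$, $B=C\land D$, and both $C,D\in\clo_\Lambda(G)$.
\item $\mathsf{4}\in\Lambda$, $B=\Box C$, and $C\in\clo_\Lambda(G)$.
\end{itemize}
To justify this characterization, note that $G^\Lambda_n$ is monotone in $n$. Hence if $C\in G^\Lambda_m$ and $D\in G^\Lambda_k$, then $C\land D\in G^\Lambda_{\max(m,k)+1}$. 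Conversely, any element of $G^\Lambda_{n+1}\setminus G$ has one of the two forms with its components in $G^\Lambda_n$. Course-of-values recursion on G\"odel numbers, using that subformulas have smaller codes, makes this relation primitive recursive.

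Thus the countermodel is a primitive recursively presented finite closure model satisfying $(\mathsf{X})$ for all $\mathsf{X}\in\Lambda$ in which $A$ is not valid, contradicting~3.
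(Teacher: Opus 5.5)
Your proof is correct and follows the paper's route. Soundness gives $1\Rightarrow 2$, and $2\Rightarrow 3$ is trivial. For $3\Rightarrow 1$ you use the finite countermodels of Theorems~\ref{compl_withoutD} and~\ref{compl_D_withoutN}, according to whether $\mathsf{D}\in\Lambda$, and note that they are primitive recursively presented. The only difference is that you justify why $B\in\clo_\Lambda(G(X_A))$ is primitive recursive, by recursion on $B$ using that the generating clauses strictly increase length, whereas the paper simply asserts this.
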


The preceding proofs also yield primitive recursive decision procedures. 
To determine whether $L_\Lambda\vdash A$ or not, it is sufficient to search for a countermodel of the particular form constructed above.
In this search, the closure operator is not part of the data to be searched.
If $\Lambda \subseteq \{\N, \mathsf{C}, \mathsf{P}, \mathsf{4}\}$, the construction uses the fixed closure operator $\clo_A$ in the proof of Theorem \ref{compl_withoutD}. 
If $\Lambda \subseteq \{\mathsf{C}, \mathsf{P}, \mathsf{D}, \mathsf{4}\}$ with $\mathsf{D}\in\Lambda$, the construction uses the fixed closure operator $\clo_\Lambda$ in the proof of Theorem \ref{compl_D_withoutN}.
Thus only the finite data, such as the set of worlds, the generators $G(x)$, and the valuation of propositional variables occurring in $A$, need be searched.

In the case of Theorem~\ref{compl_withoutD}, the finite set $\Sub^\ast(A)$ is primitive recursively computed from $A$. Then, in order to look for a countermodel to $A$, it is sufficient to search through finite sets of worlds $W\subseteq\mathcal{P}(\Sub^\ast(A))$, finite generators $G(x)\subseteq\Sub^\ast(A)$ for $x\in W$, and valuations of propositional variables occurring in $A$. For each $\mathsf{X}\in\Lambda$, by the definition of $\clo_A$, the verification of the condition $(\mathsf{X})$ reduces to checking the behavior of the finite generators $G(x)$ on formulas in $\Sub^\ast(A)$.

In the case of Theorem~\ref{compl_D_withoutN}, in order to look for a countermodel to $A$, it is sufficient to search through a single set $X_A\subseteq\Sub^\ast(A)$, a finite generator $G(X_A)\subseteq\Sub^\ast(A)$, and a valuation of propositional variables occurring in $A$. 
The conditions $(\mathsf{C})$ and $(\mathsf{4})$ are built into the definition of $\clo_\Lambda$. 
For the conditions $(\mathsf{P})$ and $(\mathsf{D})$, the verification is also primitive recursive. 
Indeed, $\clo_\Lambda$ never generates $\bot$ or a formula whose outermost connective is $\neg$. 
Hence, for $(\mathsf{P})$, it is sufficient to check $\bot\notin G(X_A)$.
For $(\mathsf{D})$, it is sufficient to check that $B\notin\clo_\Lambda(G(X_A))$ for every $\neg B\in G(X_A)$.
Since $G(X_A)$ is finite and the relation $B\in\clo_\Lambda(G(X_A))$ is primitive recursive, this gives a primitive recursive check.

Hence we obtain the following theorem.

\begin{thm}\label{decidability}
Let $\Lambda\subseteq\{\mathsf{N},\mathsf{C},\mathsf{P}, \mathsf{D}, \mathsf{4}\}$ and assume $\{\mathsf{N}, \mathsf{D}\} \nsubseteq \Lambda$. 
Then, the set $L_\Lambda$ is primitive recursive. 
\end{thm}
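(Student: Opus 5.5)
The plan is to show that ``$L_\Lambda\vdash A$'' is equivalent to a bounded search over finite data that are primitive recursively computable from $A$, with a primitive recursive bound. By Theorem~\ref{fmp}, $L_\Lambda\nvdash A$ holds if and only if $A$ fails in some closure model satisfying $(\mathsf X)$ for all $\mathsf X\in\Lambda$. Moreover, the proofs of Theorems~\ref{compl_withoutD} and \ref{compl_D_withoutN} show that such a model can always be found in a specific normal form. For $\Lambda\subseteq\{\mathsf N,\mathsf C,\mathsf P,\mathsf 4\}$ we may use the closure operator $\clo_A$, worlds $W\subseteq\mathcal P(\Sub^\ast(A))$, generators $G(x)\subseteq\Sub^\ast(A)$, and a valuation on the variables of $A$. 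For $\mathsf D\in\Lambda$, $\mathsf N\notin\Lambda$, we may use $\clo_\Lambda$, a single world, and $G\subseteq\Sub^\ast(A)$. The number of candidates is bounded by an iterated exponential in $|A|$, so it suffices to show that checking a single candidate is primitive recursive.

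For a fixed candidate, I will first evaluate truth of formulas in $\Sub^\ast(A)$. The only nontrivial case is $\Box C$, which reduces to deciding $C\in\clo(G(x))$. For $\clo_A$ this is immediate: test whether $C\in G(x)$ or $\Box C\notin\Sub^\Box(A)$. For $\clo_\Lambda$, any $C\in\clo_\Lambda(G)$ is built from members of $G$ by $\land$ and $\Box$, and each step increases length. Hence $C\in\clo_\Lambda(G)$ if and only if $C\in G^\Lambda_{|C|}$, and this can be decided by a recursion on $C$ bounded by its subformulas: either $C\in G$, or $C=C_1\land C_2$ with both $C_i$ in the closure, or $C=\Box C_1$ with $C_1$ in the closure.

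Next I check the conditions $(\mathsf X)$. $(\mathsf C)$ and $(\mathsf 4)$ hold automatically for $\clo_\Lambda$. For $\clo_A$ they only matter when $\Box(B\land C)$ or $\Box\Box B$ lies in $\Sub^\Box(A)$, which gives finitely many checks inside $\Sub^\ast(A)$. $(\mathsf P)$ amounts to $\bot\notin\clo(G(x))$. $(\mathsf D)$ follows the remark before the statement: $\clo_\Lambda$ never produces negations, so it suffices to check $B\notin\clo_\Lambda(G)$ for every $\neg B\in G$.

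I expect the main obstacle to be $(\mathsf N)$, since it quantifies over all formulas valid in the model. I would reduce it using the definition of $\clo_A$: the formulas $B$ with $\Box B\notin\Sub^\Box(A)$ are already in every closure. So $(\mathsf N)$ is equivalent to requiring that for each $B$ with $\Box B\in\Sub^\Box(A)$ (so $B\in\Sub(A)\cup\{\bot\}$) that is true at every world, $B\in G(x)$ for all $x$. Truth of such a $B$ is already decidable by the evaluation step, so this is again a finite check.

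Combining these steps, ``$L_\Lambda\nvdash A$'' becomes a bounded existential quantifier over primitive recursive data with a primitive recursive matrix, so $L_\Lambda$ is primitive recursive. Soundness (the preceding proposition) guarantees that any candidate passing the check really witnesses $L_\Lambda\nvdash A$, which justifies the equivalence.
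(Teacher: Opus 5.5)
Your proposal is correct and follows essentially the same route as the paper. Both arguments do a bounded search over the normal-form countermodels from Theorems~\ref{compl_withoutD} and \ref{compl_D_withoutN}, with the closure operators $\clo_A$ and $\clo_\Lambda$ fixed in advance, so only $W$, the generators $G(x)\subseteq\Sub^\ast(A)$ and the valuation are searched, and each candidate is checked primitive recursively. Your structural recursion for membership in $\clo_\Lambda(G)$ (applying the $\land$- and $\Box$-clauses only when $\mathsf{C}$, resp.\ $\mathsf{4}$, is in $\Lambda$) and your reduction of $(\mathsf{N})$ to the finitely many $B\in\Sub(A)\cup\{\bot\}$ just make explicit what the paper summarizes as checking the behavior of the finite generators on formulas in $\Sub^\ast(A)$.
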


\subsection{Completeness for logics with $\mathsf{N}$ and $\mathsf{D}$}\label{withND}

We next study logics containing both $\mathsf{N}$ and $\mathsf{D}$. 
In the finite countermodel constructions above, the Truth Lemma is proved only for formulas in the finite set $\Sub^\ast(A)$. 
This is not sufficient for verifying $(\mathsf{N})$, since $(\mathsf{N})$ concerns all formulas valid in the model. 
Indeed, in order to satisfy $(\mathsf{N})$, the sets $\clo(G(X))$ must contain sufficiently many formulas. 
On the other hand, $(\mathsf{D})$ requires these sets $\clo(G(X))$ to avoid containing both $B$ and $\neg B$. 
To overcome this situation, we use a canonical model construction in which the Truth Lemma holds for all modal formulas.

Let $\{\mathsf{N},\mathsf{D}\}\subseteq
\Lambda\subseteq\{\mathsf{N},\mathsf{C},\mathsf{D},\mathsf{4}\}$. 
We define our closure operator $\clo_{\Lambda}$ as follows. 
For $X\subseteq\MF$, we define a sequence $\{X_n^\Lambda\}_{n\in\omega}$ of sets of modal formulas as follows. 
Let $X_0^\Lambda:=X\cup L_\Lambda$. 
Let $X_{n+1}^\Lambda$ be the union of $X_n^\Lambda$ and the following sets:
\begin{itemize}
\item $\{B\land C\mid B,C\in X_n^\Lambda\}$ if $\mathsf{C}\in\Lambda$;
\item $\{\Box B\mid B\in X_n^\Lambda\}$ if $\mathsf{4}\in\Lambda$.
\end{itemize}
Finally, we define $\clo_\Lambda(X):=\bigcup_{n\in\omega}X_n^\Lambda$.
It is easily verified that $\clo_{\Lambda}$ is a closure operator. 
The only difference from the definition of $\clo_{\Lambda}$ in the previous subsection is that the logic $L_{\Lambda}$ is contained in the initial set $X_0^\Lambda$.

We define a closure model $\mathcal{M}_\Lambda=(W,G,\clo_\Lambda,\Vdash)$ as follows:
\begin{itemize}
\item $W$ is the set of all maximally $L_\Lambda$-consistent sets of modal formulas.

\item For $X\in W$, let $G(X):=\{B\in\MF\mid \Box B\in X\}$.

\item For $X\in W$ and each propositional variable $p$, we define $X\Vdash p:\iff p\in X$.
\end{itemize}

\begin{lem}\label{ND_Box_Lem}
For every $X\in W$ and every $B\in\MF$, if $B\in\clo_\Lambda(G(X))$, then $\Box B\in X$.
\end{lem}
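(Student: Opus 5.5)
I will prove, by induction on $n$, the statement: for every $B\in\MF$, if $B\in G(X)_n^\Lambda$, then $\Box B\in X$. This follows the proof of Lemma~\ref{DwithoutN_Box_Lem} closely. The one new ingredient is that the initial set now contains $L_\Lambda$ as well as $G(X)$, so the base case has two parts. Throughout I use the standard facts that a maximally $L_\Lambda$-consistent set $X$ contains $L_\Lambda$ and is closed under Modus Ponens. This is the version for arbitrary formulas, not the $\Sub^\ast(A)$-relative one.

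For the base case $n=0$ we have $B\in G(X)_0^\Lambda=G(X)\cup L_\Lambda$. If $B\in G(X)$, then $\Box B\in X$ directly by the definition of $G(X)$. If $B\in L_\Lambda$, then $\Box B\in L_\Lambda$ because $\mathsf{N}\in\Lambda$ makes $L_\Lambda$ closed under \textsc{Nec}. Since $X$ is maximally $L_\Lambda$-consistent, $L_\Lambda\subseteq X$, and hence $\Box B\in X$. This is the only place where the hypothesis $\mathsf{N}\in\Lambda$ is used, and it is exactly why $L_\Lambda$ may be placed into $X_0^\Lambda$.

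For the inductive step, let $B\in G(X)_{n+1}^\Lambda\setminus G(X)_n^\Lambda$. There are two ways $B$ can have been introduced.
\begin{itemize}
\item If it came from the $\land$-clause, then $\mathsf{C}\in\Lambda$ and $B=C\land D$ with $C,D\in G(X)_n^\Lambda$. The induction hypothesis gives $\Box C,\Box D\in X$. The axiom $\Box C\land\Box D\to\Box(C\land D)$ lies in $L_\Lambda\subseteq X$, so closure of $X$ under Modus Ponens gives $\Box(C\land D)\in X$.
\item If it came from the $\Box$-clause, then $\mathsf{4}\in\Lambda$ and $B=\Box C$ with $C\in G(X)_n^\Lambda$. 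The induction hypothesis gives $\Box C\in X$, and the axiom $\Box C\to\Box\Box C$ yields $\Box\Box C\in X$.
\end{itemize}

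Since $\clo_\Lambda(G(X))=\bigcup_n G(X)_n^\Lambda$, the lemma follows. I see no real obstacle here. The only point needing care is the base case for members of $L_\Lambda$, which relies on \textsc{Nec} together with the fact that maximal consistent sets contain the logic.
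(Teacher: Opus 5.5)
Your proposal is correct and follows essentially the same route as the paper. The paper's proof also reuses the induction of Lemma~\ref{DwithoutN_Box_Lem} and treats separately the base case $B\in L_\Lambda$, using \textsc{Nec} (from $\mathsf{N}\in\Lambda$) to get $\Box B\in L_\Lambda\subseteq X$.
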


\begin{proof}
The proof is the same as that of Lemma~\ref{DwithoutN_Box_Lem}, except for the case where $B$ is in $G(X)_0^{\Lambda}$ because $B\in L_\Lambda$. 
In this case, since $\mathsf{N}\in\Lambda$, we have $\Box B \in L_\Lambda$ by \textsc{Nec}. 
Hence $\Box B\in X$.
\end{proof}

\begin{lem}[Truth Lemma]\label{ND_Truth_Lem}
For every $X\in W$ and every modal formula $B$, we have $X\Vdash B$ if and only if $B\in X$.
\end{lem}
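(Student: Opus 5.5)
We argue by induction on the construction of $B$, simultaneously for all $X\in W$. The cases $B=\bot$ and $B=p$ hold by the definition of $\Vdash$ and the consistency of $X$. The Boolean cases $\neg$, $\land$, $\lor$, $\to$ are the usual ones for maximally $L_\Lambda$-consistent sets. They use only that $X$ contains all tautologies, is closed under Modus Ponens, and contains exactly one of $C$ and $\neg C$ for every formula $C$. Thus the only case needing attention is $B=\Box C$.

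For $B=\Box C$, we note that $X\Vdash\Box C$ holds if and only if $C\in\clo_\Lambda(G(X))$, by the definition of satisfaction. We prove both directions directly from the construction, with no appeal to the induction hypothesis for $C$.

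\begin{itemize}
\item If $C\in\clo_\Lambda(G(X))$, then $\Box C\in X$ by Lemma~\ref{ND_Box_Lem}.
\item If $\Box C\in X$, then $C\in G(X)$ by the definition of $G$. Since $G(X)\subseteq G(X)_0^\Lambda\subseteq\clo_\Lambda(G(X))$, this gives $C\in\clo_\Lambda(G(X))$.
\end{itemize}

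Hence $X\Vdash\Box C$ if and only if $\Box C\in X$.

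The real work for the modal case was done in Lemma~\ref{ND_Box_Lem}, in particular its use of \textsc{Nec} for formulas that enter $G(X)_0^\Lambda$ through $L_\Lambda$. What remains is a routine step: checking that the canonical maximal consistent sets behave classically with respect to arbitrary formulas. This holds because maximality is now taken over all of $\MF$ rather than over $\Sub^\ast(A)$. In particular, no restriction to a finite subformula set is needed, which is exactly why the lemma holds for every modal formula.
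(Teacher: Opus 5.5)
Your proof is correct and follows the paper's argument: induction on $B$ with the standard Boolean cases, and the $\Box C$ case handled exactly as in the single-world Truth Lemma. One direction uses Lemma~\ref{ND_Box_Lem}, and the other uses $G(X)\subseteq\clo_\Lambda(G(X))$; your remark that this modal case needs no induction hypothesis is accurate.
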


\begin{proof}
We prove the lemma by induction on the construction of $B$. 
The cases for Boolean connectives are standard. 
The case $B=\Box C$ is proved exactly as in Lemma~\ref{DwithoutN_Truth_Lem},
using Lemma~\ref{ND_Box_Lem} in place of Lemma~\ref{DwithoutN_Box_Lem}.
\end{proof}

\begin{lem}\label{ND_conditions}
The model $\mathcal{M}_\Lambda$ satisfies the conditions $(\mathsf{X})$ for all $\mathsf{X}\in\Lambda$.
\end{lem}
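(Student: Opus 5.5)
We verify each condition separately, using Lemma~\ref{ND_Box_Lem} and the Truth Lemma (Lemma~\ref{ND_Truth_Lem}) as the main tools. Here $\Lambda$ always contains $\mathsf{N}$ and $\mathsf{D}$, and possibly $\mathsf{C}$ and $\mathsf{4}$.

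\emph{Condition $(\mathsf{C})$.} Suppose $\mathsf{C}\in\Lambda$ and $B,C\in\clo_\Lambda(G(X))$. Then $B,C\in G(X)^\Lambda_n$ for some common $n$, because the stages are increasing. Hence $B\land C\in G(X)^\Lambda_{n+1}$ by the $\land$-clause.

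\emph{Condition $(\mathsf{4})$.} Suppose $\mathsf{4}\in\Lambda$ and $B\in\clo_\Lambda(G(X))$. Then $\Box B\in\clo_\Lambda(G(X))$ by the $\Box$-clause in the same way. Neither of these two cases needs the Truth Lemma.

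\emph{Condition $(\mathsf{D})$.} Suppose, towards a contradiction, that $B,\neg B\in\clo_\Lambda(G(X))$ for some $X\in W$. By Lemma~\ref{ND_Box_Lem}, both $\Box B$ and $\Box\neg B$ belong to $X$. This contradicts the axiom $\neg(\Box B\land\Box\neg B)\in L_\Lambda\subseteq X$ together with the $L_\Lambda$-consistency of $X$.

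\emph{Condition $(\mathsf{N})$.} This is the only step where the extra initial segment $L_\Lambda\subseteq G(X)^\Lambda_0$ matters. Suppose $\mathcal{M}_\Lambda\models B$. By the Truth Lemma, $B\in X$ for every maximally $L_\Lambda$-consistent set $X$. If $L_\Lambda\nvdash B$, then $\{\neg B\}$ is $L_\Lambda$-consistent. By the standard Lindenbaum argument (the language is countable), it extends to some $Y\in W$ with $B\notin Y$, which is a contradiction. Therefore $B\in L_\Lambda\subseteq G(X)^\Lambda_0\subseteq\clo_\Lambda(G(X))$ for every $X\in W$, as required. (Note that $W\neq\emptyset$ because $L_\Lambda$ is consistent, so $\mathcal{M}_\Lambda$ is a genuine closure model.)

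The main point to check is that $(\mathsf{N})$ and $(\mathsf{D})$ are now simultaneously satisfiable. This is exactly why the Truth Lemma must hold for \emph{all} formulas, so that validity in the model coincides with membership in $L_\Lambda$. It is also why Lemma~\ref{ND_Box_Lem} must cover formulas entering through $L_\Lambda$ via \textsc{Nec}. Since both lemmas are already established, the remaining verifications are routine.
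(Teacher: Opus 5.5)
Your proposal is correct and follows essentially the paper's argument: $(\mathsf{C})$ and $(\mathsf{4})$ come directly from the closure clauses, $(\mathsf{D})$ comes from Lemma~\ref{ND_Box_Lem} together with the axiom $\mathsf{D}$, and $(\mathsf{N})$ comes from the Truth Lemma together with $L_\Lambda\subseteq G(X)^\Lambda_0$. You also spell out the Lindenbaum step showing that a formula lying in every maximally $L_\Lambda$-consistent set belongs to $L_\Lambda$, which the paper leaves implicit.
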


\begin{proof}
The proofs of $(\mathsf{C})$, $(\mathsf{4})$, and $(\mathsf{D})$ are the same as in the proof of Lemma \ref{DwithoutN_conditions_Lem} using Lemma~\ref{ND_Box_Lem} in place of Lemma~\ref{DwithoutN_Box_Lem}. 
We only prove $(\mathsf{N})$.

$(\mathsf{N})$: Let $B$ be a modal formula valid in $\mathcal{M}_\Lambda$. 
By Lemma~\ref{ND_Truth_Lem}, $B\in X$ for all $X\in W$. 
Hence $B\in L_\Lambda\subseteq\clo_\Lambda(G(X))$ for every $X\in W$. 
\end{proof}

\begin{lem}\label{ND_canonical_completeness}
For every modal formula $A$, $A \in L_{\Lambda}$ if and only if $\mathcal{M}_{\Lambda} \models A$.
\end{lem}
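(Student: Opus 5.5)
The two directions are handled separately: soundness via the general soundness proposition for closure models, completeness via the Truth Lemma (Lemma~\ref{ND_Truth_Lem}) and the standard Lindenbaum argument. Before either, we record that $W$ is non-empty. Since $L_\Lambda$ is consistent (it is a sublogic of $\mathsf{KD4}$), Lindenbaum's lemma gives at least one maximally $L_\Lambda$-consistent set. Hence $\mathcal{M}_\Lambda$ is indeed a closure model; that $\clo_\Lambda$ is a closure operator was already noted.

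For the direction from left to right, Lemma~\ref{ND_conditions} shows that $\mathcal{M}_\Lambda$ satisfies $(\mathsf{X})$ for every $\mathsf{X}\in\Lambda$. The soundness proposition stated earlier then gives $L_\Lambda\subseteq\Th(\mathcal{M}_\Lambda)$, so $A\in L_\Lambda$ implies $\mathcal{M}_\Lambda\models A$. Alternatively, one can argue directly: every maximally $L_\Lambda$-consistent set contains $L_\Lambda$, so $A\in X$ for all $X\in W$, and the Truth Lemma yields $X\Vdash A$ for all $X$.

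For the direction from right to left, we argue by contraposition. Suppose $A\notin L_\Lambda$. Then $\{\neg A\}$ is $L_\Lambda$-consistent: otherwise $L_\Lambda\vdash\neg A\to\bot$, and hence $L_\Lambda\vdash A$ by propositional reasoning, since $L_\Lambda$ contains all tautologies and is closed under Modus Ponens. By Lindenbaum's lemma, extend $\{\neg A\}$ to a maximally $L_\Lambda$-consistent set $X\in W$. Then $A\notin X$, and by the Truth Lemma (Lemma~\ref{ND_Truth_Lem}) $X\nVdash A$, so $\mathcal{M}_\Lambda\not\models A$.

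There is no real obstacle, since all the substantive work sits in Lemmas~\ref{ND_Box_Lem}, \ref{ND_Truth_Lem} and \ref{ND_conditions}. The only point needing care is that the Truth Lemma holds for \emph{all} modal formulas, not just those in a finite subformula set; this is exactly what makes the infinite canonical model work here.
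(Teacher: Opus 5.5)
Your proposal is correct and matches the paper's proof. The paper proves $(\Rightarrow)$ by your alternative direct argument (every maximally $L_\Lambda$-consistent set contains $L_\Lambda$, then the Truth Lemma), and $(\Leftarrow)$ by the same Lindenbaum-plus-Truth-Lemma contraposition you give; your primary route for $(\Rightarrow)$ through Lemma~\ref{ND_conditions} and the soundness proposition is also valid, just less direct.
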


\begin{proof}
$(\Rightarrow)$: If $A \in L_\Lambda$, then $A\in X$ for every maximally $L_\Lambda$-consistent set $X\in W$. 
By the Truth Lemma, $X\Vdash A$ for every $X\in W$. 
Hence $\mathcal{M}_\Lambda\models A$.

\medskip

$(\Leftarrow)$: Suppose that $A \notin L_\Lambda$. 
Then $\{\neg A\}$ is $L_\Lambda$-consistent, and hence there exists a maximally $L_\Lambda$-consistent set $X_A\in W$ such that $A \notin X_A$. 
By the Truth Lemma, $X_A\nVdash A$. 
Thus $\mathcal{M}_\Lambda\not\models A$.
\end{proof}

We conclude the following completeness theorem.

\begin{thm}\label{thm:complete_withND}
Let $\{\mathsf{N},\mathsf{D}\}\subseteq \Lambda\subseteq\{\mathsf{N},\mathsf{C},\mathsf{D},\mathsf{4}\}$. 
For any modal formula $A$, the following are equivalent:
\begin{enumerate}
\item $A \in L_\Lambda$.
\item $A$ is valid in all closure models satisfying the conditions $(\mathsf{X})$ for all $\mathsf{X}\in\Lambda$.
\end{enumerate}
\end{thm}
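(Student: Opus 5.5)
The theorem follows by combining soundness with the canonical model $\mathcal{M}_\Lambda$ just constructed; both directions are short.

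For $(1)\Rightarrow(2)$, I will invoke the soundness proposition stated after Proposition~\ref{prop:closure-conditions}. If $\mathcal{M}$ is any closure model satisfying $(\mathsf{X})$ for every $\mathsf{X}\in\Lambda$, that proposition gives $L_\Lambda\subseteq\Th(\mathcal{M})$. So every $A\in L_\Lambda$ is valid in $\mathcal{M}$.

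Behind that proposition lies the only point needing care. $\Th(\mathcal{M})$ is closed under Modus Ponens and contains all tautologies. By Proposition~\ref{prop:closure-conditions}, condition $(\mathsf{N})$ gives closure under \textsc{Nec}, and the other conditions give the axiom instances. Uniform substitution is not needed as a primitive rule, because $L_\Lambda$ coincides with the closure of all instances of the schemata under the remaining rules.

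For $(2)\Rightarrow(1)$, I argue by contraposition. Suppose $A\notin L_\Lambda$. By Lemma~\ref{ND_conditions}, the canonical model $\mathcal{M}_\Lambda=(W,G,\clo_\Lambda,\Vdash)$ is a closure model satisfying $(\mathsf{X})$ for all $\mathsf{X}\in\Lambda$. By Lemma~\ref{ND_canonical_completeness}, $\mathcal{M}_\Lambda\not\models A$. Hence $A$ fails to be valid in some model of the required class, which is the contrapositive of $(2)\Rightarrow(1)$.

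Three routine facts, already used above, should be confirmed when writing this up:
\begin{enumerate}
\item $\mathcal{M}_\Lambda$ really is a closure model. $W\neq\emptyset$ because $L_\Lambda$ is consistent, being contained in $\mathsf{KD4}$, so Lindenbaum's lemma applies.
\item $\clo_\Lambda$ is a closure operator. Idempotence holds because $X_0^\Lambda$ already contains $L_\Lambda$ and each later stage only adds conjunctions and boxes.
\item The satisfaction clauses are respected by construction.
\end{enumerate}

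There is no real obstacle; the substantive work is in Lemmas~\ref{ND_Box_Lem}--\ref{ND_canonical_completeness}, in particular in verifying $(\mathsf{D})$ for $\clo_\Lambda$ through Lemma~\ref{ND_Box_Lem}.
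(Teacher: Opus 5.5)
Your proposal is correct and is essentially the paper's own argument. Direction $(1)\Rightarrow(2)$ is the soundness proposition following Proposition~\ref{prop:closure-conditions}. Direction $(2)\Rightarrow(1)$ takes the canonical model $\mathcal{M}_\Lambda$ and applies Lemmas~\ref{ND_conditions} and~\ref{ND_canonical_completeness}.
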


\begin{prob}\label{prob:withND-fmp}
Let $\{\mathsf{N},\mathsf{D}\}\subseteq \Lambda\subseteq\{\mathsf{N},\mathsf{C},\mathsf{D},\mathsf{4}\}$. 
Does $L_\Lambda$ have the finite model property with respect to closure models satisfying the conditions $(\mathsf{X})$ for all $\mathsf{X}\in\Lambda$?
\end{prob}

\subsection{Primitive recursively presented countermodels for $\CND$}\label{CND}

The main purpose of this subsection is to obtain a primitive recursively presented closure countermodel for $\CND$. 
This is needed in the proof of the arithmetical completeness theorem for $\CND$.
For this purpose, we first prove that $\CND$ is primitive recursive. 

For $S\subseteq\MF$, let $\clo_{\land}(S)$ denote the $\land$-closure of $S$. 

\begin{lem}\label{reduction}
Let $B_0, \ldots, B_{k-1}, C_0, \ldots, C_{l-1}$ be modal formulas and let $D$ be a propositional formula. 
The following are equivalent:
\begin{enumerate}
\item $\CND\vdash \Box B_0 \lor \cdots \lor \Box B_{k-1} \lor \neg \Box C_0 \lor \cdots \lor \neg \Box C_{l-1} \lor D$.
\item One of the following three conditions holds:
\begin{description}
    \item [(i)] For some $i < k$, $B_i \in \clo_{\land}(\{C_0, \ldots, C_{l-1}\} \cup \CND)$, 
    \item [(ii)] $\CND \vdash \neg (C_0 \land \cdots \land C_{l-1})$,
    \item [(iii)] $D$ is a propositional tautology. 
\end{description}
\end{enumerate}
\end{lem}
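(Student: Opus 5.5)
The implication from 2 to 1 is a direct derivation. For 1 to 2 I would argue by contraposition: assuming none of (i)--(iii) holds, I would enlarge the canonical model $\mathcal{M}_\Lambda$ of Subsection~\ref{withND} (with $\Lambda=\{\mathsf{N},\mathsf{C},\mathsf{D}\}$) by one new point that falsifies the disjunction. Then soundness, which holds since the enlarged model still satisfies $(\mathsf{N})$, $(\mathsf{C})$, $(\mathsf{D})$, gives the contradiction.

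\emph{From 2 to 1.} Case (iii) is trivial. For case (i), I would show by induction on the generation of $\clo_{\land}(\{C_0,\ldots,C_{l-1}\}\cup\CND)$ that every element $E$ satisfies $\CND\vdash \Box C_0\land\cdots\land\Box C_{l-1}\to\Box E$.
\begin{itemize}
\item If $E=C_j$, this is immediate.
\item If $E\in\CND$, then $\Box E\in\CND$ by \textsc{Nec}.
\item If $E=E_1\land E_2$, use the induction hypothesis together with the axiom $\mathsf{C}$.
\end{itemize}
Applying this to $E=B_i$ yields the disjunction in 1. For case (ii), first note that $l\ge 1$, since $\CND$ is consistent and so cannot prove $\neg\top$. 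Repeated use of $\mathsf{C}$ gives $\Box C_0\land\cdots\land\Box C_{l-1}\to\Box(C_0\land\cdots\land C_{l-1})$. \textsc{Nec} applied to the hypothesis gives $\Box\neg(C_0\land\cdots\land C_{l-1})$. The axiom $\mathsf{D}$ then yields $\neg(\Box C_0\land\cdots\land\Box C_{l-1})$, which again implies the disjunction.

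\emph{From 1 to 2.} Suppose (i), (ii) and (iii) all fail. Let $\mathcal{M}=(W,G,\clo_\Lambda,\Vdash)$ be the canonical model $\mathcal{M}_\Lambda$. Define $\mathcal{M}'$ by adding a new point $y$ with $G(y)=\{C_0,\ldots,C_{l-1}\}$, and give $y$ a valuation of propositional variables that falsifies $D$; this is possible because (iii) fails.

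The key observation is that in a closure model the truth of any formula at a point $x$ depends only on $G(x)$, $\clo$, and the valuation at $x$. Hence every $x\in W$ satisfies the same formulas in $\mathcal{M}'$ as in $\mathcal{M}_\Lambda$. In particular $\Th(\mathcal{M}')\subseteq\Th(\mathcal{M}_\Lambda)=\CND$ by Lemma~\ref{ND_canonical_completeness}.

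Since $\mathsf{4}\notin\Lambda$, we have $\clo_\Lambda(G(y))=\clo_{\land}(\{C_0,\ldots,C_{l-1}\}\cup\CND)$. I would then check the three conditions in $\mathcal{M}'$.
\begin{itemize}
\item $(\mathsf{N})$: every formula valid in $\mathcal{M}'$ lies in $\CND$, and $\CND$ is contained in $\clo_\Lambda(G(x))$ for every point $x$, including $y$.
\item $(\mathsf{C})$: this is built into $\clo_\Lambda$.
\item $(\mathsf{D})$ at old points: this holds by Lemma~\ref{ND_conditions}.
\item $(\mathsf{D})$ at $y$: by the same induction as above, every $E\in\clo_\Lambda(G(y))$ satisfies $\CND\vdash C_0\land\cdots\land C_{l-1}\to E$. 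So if both $E$ and $\neg E$ belonged to the closure, we would get $\CND\vdash\neg(C_0\land\cdots\land C_{l-1})$, contradicting the failure of (ii).
\end{itemize}

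By the soundness proposition, $\CND\subseteq\Th(\mathcal{M}')$, so the formula in 1 holds at $y$. But at $y$ each $\Box C_j$ is true. Each $\Box B_i$ is false, because the failure of (i) means $B_i\notin\clo_\Lambda(G(y))$. And $D$ is false by the choice of valuation. This contradiction proves 2.

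The step I expect to need the most care is verifying $(\mathsf{N})$ for the enlarged model, since validity refers to the whole model. The point is that adding $y$ can only shrink $\Th$, and $\CND$ is placed inside every closure by the definition of $\clo_\Lambda$, so this works. The other delicate step is checking $(\mathsf{D})$ at $y$, which is exactly where the failure of (ii) is used.
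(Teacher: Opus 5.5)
Your proof is correct, and the countermodel at its core is the same as the paper's, but you verify it by a different route. For (1$\Rightarrow$2) the paper uses no closure model at all. It takes a truth assignment $v$ with $v(D)=0$ and treats boxed formulas as atoms, setting $v(\Box E)=1$ iff $E\in\clo_{\land}(\{C_0,\ldots,C_{l-1}\}\cup\CND)$. It then checks by induction on the length of $\CND$-derivations that $v$ makes every theorem true. The $\mathsf{D}$-axiom case uses the failure of (ii) exactly as your $(\mathsf{D})$-at-$y$ check does. The \textsc{Nec} case is immediate because $\CND$ lies inside that closure. Your new point $y$ is literally this valuation. The difference is that you pass through the general soundness proposition, which forces you to verify the global condition $(\mathsf{N})$. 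That is why you must attach $y$ to the canonical model $\mathcal{M}_\Lambda$ and invoke Lemma~\ref{ND_canonical_completeness} to get $\Th(\mathcal{M}')\subseteq\CND$. A lone point $y$ would generally fail $(\mathsf{N})$: for example, $\Box C_0$ is valid there but need not belong to the closure. Your route shows a reusable fact, namely that any point whose closure contains $\CND$, is $\land$-closed, and is $\mathsf{D}$-consistent can be added to the canonical model. However, it depends on Lindenbaum's lemma and the canonical completeness of Subsection~\ref{withND}, whereas the paper's argument is the elementary, purely local form of soundness. Your explicit handling of $l=0$ in case (ii), and of the induction giving $\CND\vdash C_0\land\cdots\land C_{l-1}\to E$, fills in details the paper dismisses as easy.
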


\begin{proof}
Let $A$ be the formula $\Box B_0 \lor \cdots \lor \Box B_{k-1} \lor \neg \Box C_0 \lor \cdots \lor \neg \Box C_{l-1} \lor D$. 

\medskip

$(1 \Rightarrow 2)$: We prove the contrapositive. 
Assume that none of (i), (ii), and (iii) hold. 
Since $D$ is not a tautology, we find a truth assignment $v$ such that $v(D) = 0$. 
We extend the domain of $v$ to modal formulas as follows, by regarding boxed formulas as atoms: For each modal formula $E$,  
\begin{itemize}
    \item $v(\Box E)=1 : \iff E\in\clo_{\land}(\{C_0, \ldots, C_{l-1}\}\cup\CND)$.
\end{itemize}
The truth value of every modal formula is then defined in the usual way. 

We show that every theorem of $\CND$ is true under $v$. 
This is proved by induction on the length of a proof in $\CND$.
\begin{itemize}
    \item If $E$ is a tautology, then clearly $v(E)=1$.

    \item Suppose that $E$ is an instance $\Box E_0\land\Box E_1\to\Box(E_0\land E_1)$ of the axiom $\mathsf{C}$. 
If $v(\Box E_0)=v(\Box E_1)=1$, then $E_0,E_1\in\clo_{\land}(\{C_0, \ldots, C_{l-1}\}\cup\CND)$. 
Then, $E_0\land E_1\in\clo_{\land}(\{C_0, \ldots, C_{l-1}\}\cup\CND)$, and hence $v(\Box(E_0\land E_1))=1$. 

    \item Suppose that $E$ is an instance $\neg(\Box E_0\land\Box\neg E_0)$ of the axiom $\mathsf{D}$.
If $v(\Box E_0\land\Box\neg E_0)=1$, then $E_0,\neg E_0\in\clo_{\land}(\{C_0, \ldots, C_{l-1}\}\cup\CND)$. 
Then, it is easy to show that $\CND\vdash C_0 \land \cdots \land C_{l-1} \to E_0$ and $\CND\vdash C_0 \land \cdots \land C_{l-1}\to\neg E_0$. 
Hence $\CND\vdash\neg (C_0 \land \cdots \land C_{l-1})$. 
This contradicts the failure of (ii). 
Therefore $v(\neg(\Box E_0\land\Box\neg E_0))=1$.

    \item It is obvious that Modus Ponens is sound under $v$.

    \item We show that \textsc{Nec} is sound under $v$. 
Suppose that $\Box E$ is obtained from $E$ by \textsc{Nec}. 
Since $\CND \vdash E$, we have $E\in \clo_{\land}(\{C_0, \ldots, C_{l-1}\}\cup\CND)$. 
Hence $v(\Box E)=1$.
\end{itemize}
Therefore every theorem of $\CND$ is true under $v$.

Let $i < k$. 
By the failure of (i), we have $B_i \notin \clo_{\land}(\{C_0, \ldots, C_{l-1}\}\cup\CND)$. 
So $v(\Box B_i) = 0$. 
Let $j < l$. 
Since $C_j \in \clo_{\land}(\{C_0, \ldots, C_{l-1}\}\cup\CND)$, we get $v(\Box C_j) = 1$ and so $v(\neg \Box C_j) = 0$. 
Since $v(D) = 0$, we obtain $v(A) = 0$.
Since every theorem of $\CND$ is true under $v$ but $v(A) = 0$, we conclude that $\CND \nvdash A$.

\medskip

$(2 \Rightarrow 1)$: 
Suppose that (i) holds, that is, $B_i \in \clo_{\land}(\{C_0, \ldots, C_{l-1}\} \cup \CND)$ for some $i < k$. 
We have $\CND \vdash \Box C_0 \land \cdots \land \Box C_{l-1} \to \Box B_i$ by using \textsc{Nec} for elements of $\CND$ and repeatedly applying the axiom $\mathsf{C}$.
Then $\CND \vdash \Box B_i \lor \neg \Box C_0 \lor \cdots \lor \neg \Box C_{l-1}$, and hence $\CND \vdash A$. 

\medskip

Suppose that (ii) holds, that is, $\CND \vdash \neg (C_0 \land \cdots \land C_{l-1})$. 
By \textsc{Nec}, $\CND \vdash \Box \neg (C_0 \land \cdots \land C_{l-1})$. 
On the other hand, by the repeated applications of $\mathsf{C}$, we have $\CND \vdash \Box C_0 \land \cdots \land \Box C_{l-1} \to \Box (C_0 \land \cdots \land C_{l-1})$. 
By the axiom $\mathsf{D}$, we obtain $\CND \vdash \neg \Box C_0 \lor \cdots \lor \neg \Box C_{l-1}$. 
Thus, $\CND \vdash A$. 

\medskip

Suppose that (iii) holds, that is, $D$ is a propositional tautology. 
Then $\CND \vdash D$, and hence $\CND \vdash A$ clearly holds. 
\end{proof}

For each modal formula $A$, the \textit{modal degree} $\dg(A)$ of $A$ is the maximum number of nested occurrences of $\Box$ in $A$.
For each $n\in\omega$, let 
\[
    \CND_{\leq n}:=\{C\in\MF\mid \CND\vdash C\ \text{and}\ \dg(C)\leq n\}. 
\]
\begin{lem}\label{restriction}
Let $n \in \omega$. 
Let $B$ be a modal formula with $\dg(B) \leq n$ and $S$ be a set of modal formulas whose modal degrees are at most $n$. 
The following are equivalent: 
\begin{enumerate}
    \item $B \in \clo_{\land}(S \cup \CND)$. 
    \item $B \in \clo_{\land}(S \cup \CND_{\leq n})$. 
\end{enumerate}
\end{lem}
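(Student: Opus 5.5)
The direction $(2 \Rightarrow 1)$ is immediate. Since $\CND_{\leq n}\subseteq\CND$, monotonicity of the $\land$-closure gives $\clo_{\land}(S\cup\CND_{\leq n})\subseteq\clo_{\land}(S\cup\CND)$. The work is in $(1\Rightarrow 2)$.

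For $(1 \Rightarrow 2)$ I would first describe the $\land$-closure explicitly. An element of $\clo_{\land}(Y)$ is exactly a formula built from members of $Y$ by finitely many applications of $\land$. Formally, $E\in\clo_{\land}(Y)$ iff either $E\in Y$, or $E\equiv E_0\land E_1$ with $E_0,E_1\in\clo_{\land}(Y)$. This is the least-fixed-point characterization, and it is proved by induction on the stages of the closure. From it I would extract a ``leaf decomposition''. If $E\in\clo_{\land}(Y)$, then there is a finite binary tree of $\land$-splittings of $E$ whose leaves all lie in $Y$. Every node of this tree is a subformula of $E$.

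I would then prove the following statement by induction on the construction of $B$, or equivalently on the depth of the $\land$-tree witnessing membership: if $B\in\clo_{\land}(S\cup\CND)$ and $\dg(B)\leq n$, then $B\in\clo_{\land}(S\cup\CND_{\leq n})$. There are two cases.
\begin{itemize}
\item If $B\in S$, then trivially $B\in\clo_{\land}(S\cup\CND_{\leq n})$.
\item If $B\in\CND$, then $\dg(B)\leq n$ gives $B\in\CND_{\leq n}$.
\item If $B\equiv B_0\land B_1$ with $B_0,B_1\in\clo_{\land}(S\cup\CND)$, then $\dg(B_i)\leq\dg(B)\leq n$, because modal degree does not increase when passing to a conjunct. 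The induction hypothesis gives $B_0,B_1\in\clo_{\land}(S\cup\CND_{\leq n})$, hence $B\in\clo_{\land}(S\cup\CND_{\leq n})$.
\end{itemize}
The hypothesis that formulas in $S$ have degree at most $n$ is not even needed in this argument; it is harmless.

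The only step needing care is that the inductive characterization of $\clo_{\land}$ really is ``$E\in Y$ or $E$ is a conjunction of two members'' and nothing else. Concretely, I have to make sure a conjunction $B_0\land B_1$ that happens to lie in $Y$ is handled by the first case rather than forcing a split. I would settle this by taking $\clo_{\land}(Y)=\bigcup_m Y_m$ with $Y_0=Y$ and $Y_{m+1}=Y_m\cup\{E\land F\mid E,F\in Y_m\}$, and inducting on the least $m$ with $B\in Y_m$. Since $\land$ is syntactic, $E\land F$ determines $E$ and $F$ uniquely, so the splitting in the third case is well defined. I expect no real obstacle beyond this bookkeeping.
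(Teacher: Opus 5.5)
Your proof is correct and takes the same approach as the paper. The paper's one-line argument is that $\clo_{\land}$ never increases modal degree, so every generator used to build $B$ is a conjunct-subformula of $B$, and any such generator taken from $\CND$ therefore already lies in $\CND_{\leq n}$; your induction on the least stage $m$ with $B\in Y_m$ simply makes this precise. Your side remark is also right: the degree bound on $S$ plays no role in the argument.
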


\begin{proof}
Since the operator $\clo_{\land}$ does not increase the modal degrees of elements, if $B \in \clo_{\land}(S \cup \CND)$, then $B$ is generated only by elements of $S$ and elements of $\CND$ with the modal degrees at most $n$. 
So, we have $B \in\clo_{\land}(S \cup \CND_{\leq n})$.
Thus the direction $(1 \Rightarrow 2)$ holds. 
The direction $(2 \Rightarrow 1)$ is obvious. 
\end{proof}

\begin{lem}\label{CND_pr}
The set $\CND$ is primitive recursive.
\end{lem}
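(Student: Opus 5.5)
We decide $\CND\vdash A$ by recursion on the modal degree $\dg(A)$, using Lemma~\ref{reduction} to reduce each question to questions about formulas of strictly smaller modal degree, and Lemma~\ref{restriction} to make the closure condition (i) a bounded, finite check.

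First, any modal formula $A$ is classically equivalent to a conjunction of disjunctions of the form
\[ \Box B_0 \lor \cdots \lor \Box B_{k-1} \lor \neg \Box C_0 \lor \cdots \lor \neg \Box C_{l-1} \lor D, \]
where boxed subformulas occurring outside the scope of any $\Box$ are treated as atoms and $D$ is propositional. This conjunctive normal form is primitive recursively computable from $A$. Since $\CND$ is closed under conjunction introduction and elimination, $\CND\vdash A$ iff $\CND$ proves each conjunct. Each $B_i$ and $C_j$ has modal degree $<\dg(A)$. By Lemma~\ref{reduction}, each conjunct is provable iff one of (i), (ii), (iii) holds. Condition (iii) is a truth-table check. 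Condition (ii) asks whether $\CND\vdash\neg(C_0\land\cdots\land C_{l-1})$, a formula of degree $<\dg(A)$, which is handled by the recursion.

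The main obstacle is (i): deciding whether $B_i\in\clo_{\land}(\{C_0,\dots,C_{l-1}\}\cup\CND)$. Let $n<\dg(A)$ bound the degrees of the $B_i$ and $C_j$. By Lemma~\ref{restriction}, the condition is equivalent to $B_i\in\clo_{\land}(\{\vec C\}\cup\CND_{\le n})$, but $\CND_{\le n}$ is infinite. The key observation is that $\clo_\land$ only forms conjunctions. So $B_i$ lies in the closure iff $B_i$ can be parsed as a binary $\land$-tree whose leaves are each either some $C_j$ or a theorem of $\CND$. Every such leaf is a subformula of $B_i$, hence has degree $\le n$ and length $\le |B_i|$. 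Thus we can enumerate the finitely many ways of splitting $B_i$ along its top-level $\land$-structure into leaves, which are themselves subformulas of $B_i$. For each leaf we check whether it is syntactically equal to some $C_j$ or, by the recursion (its degree is $\le n<\dg(A)$), whether it is a theorem of $\CND$. Equivalently, $B_i$ is in the closure iff $B_i\in\{\vec C\}$, or $\CND\vdash B_i$, or $B_i=E\land F$ with both $E$ and $F$ in the closure. This is a recursion on the length of $B_i$ that only queries theoremhood of formulas of degree $\le n$.

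Combining these steps gives a course-of-values recursion on the pair (modal degree, length). Every recursive call is on subformulas, or on $\neg(C_0\land\cdots\land C_{l-1})$, which has lower degree and length bounded by the length of $A$. The number of calls is bounded by a primitive recursive function of $A$, so the characteristic function of $\CND$ is primitive recursive. The base case $\dg(A)=0$ needs no special treatment: the recursion applies Lemma~\ref{reduction} with $k=l=0$, so provability reduces to (iii), i.e.\ to $A$ being a tautology.
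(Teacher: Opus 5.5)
Your proposal is correct and follows essentially the same route as the paper. Both arguments recurse on modal degree, pass to modal conjunctive normal form, and use Lemma~\ref{reduction} together with Lemma~\ref{restriction} to reduce conditions (i)--(iii) to theoremhood questions of strictly lower degree; your explicit test for $\clo_{\land}$-membership ($B_i$ is some $C_j$, or a $\CND$-theorem, or $B_i = E\land F$ with both conjuncts in the closure) and your single uniform course-of-values recursion spell out what the paper compresses into ``because $\clo_\land$ is generated only by conjunction'' and its induction on $n$.
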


\begin{proof}
We prove by induction on $n$ that the set $\CND_{\leq n}$ is primitive recursive.
For $n=0$, the set $\CND_{\leq 0}$ is clearly primitive recursive.

Assume that the set $\CND_{\leq n}$ is primitive recursive. 
Let $A$ be a modal formula with $\dg(A)\leq n+1$. 
By the usual primitive recursive procedure, we compute from $A$ a modal formula $A_0\land\cdots\land A_m$ in modal conjunctive normal form such that $\CND\vdash A\leftrightarrow A_0\land\cdots\land A_m$ and $\dg(A_i)\leq n+1$ for all $i\leq m$.
Then, $\CND \vdash A$ is equivalent to $\CND \vdash A_i$ for all $i \leq m$. 
So, it suffices to show that the $\CND$-provability of each $A_i$ is primitive recursively determined. 

Each $A_i$ is of the form $\Box B_0 \lor \cdots \lor \Box B_{k-1} \lor \neg \Box C_0 \lor \cdots \lor \neg \Box C_{l-1} \lor D$, where $\dg(B_i) \leq n$ for all $i < k$, $\dg(C_j) \leq n$ for all $j < l$, and $\dg(D) = 0$. 
From Lemma \ref{reduction} and Lemma \ref{restriction}, we have that $\CND \vdash A_i$ if and only if one of the following three conditions holds: 
\begin{description}
    \item [(i)] For some $i < k$, $B_i \in \clo_{\land}(\{C_0, \ldots, C_{l-1}\} \cup \CND_{\leq n})$, 
    \item [(ii)] $\neg (C_0 \land \cdots \land C_{l-1}) \in \CND_{\leq n}$,
    \item [(iii)] $D$ is a propositional tautology. 
\end{description}
By the induction hypothesis, each of these conditions is primitive recursively decidable because $\clo_{\land}$ is generated only by conjunction.
Therefore $\CND_{\leq n+1}$ is primitive recursive. 
\end{proof}

We construct a primitive recursively presented closure model $\mathcal{M}_{\CND}'$ for $\CND$. 
Let $\clo_{\CND}$ be the closure operator defined in Subsection~\ref{withND} for the case $\Lambda=\{\mathsf{N},\mathsf{C},\mathsf{D}\}$. 
Since $\CND$ is primitive recursive by Lemma~\ref{CND_pr}, for each modal formula $C$ with $\CND\nvdash\neg C$, we can primitive recursively construct a maximally $\CND$-consistent set $X_C$ extending $\{C\}$ in the usual way.

We define a closure model $\mathcal{M}_{\CND}':=(W,G,\clo,\Vdash)$ as follows: 
\begin{itemize}
\item $W:=\{C\in\MF\mid \CND\nvdash\neg C\}$.

\item For $C\in W$, let $G(C):=\{B\in\MF\mid \Box B\in X_C\}$.

\item Let $\clo$ be the closure operator $\clo_{\{\N, \mathsf{C}, \mathsf{D}\}}$ introduced in Subsection \ref{withND}. 

\item For $C\in W$ and each propositional variable $p$, we define $C\Vdash p:\iff p\in X_C$.
\end{itemize}

Then it can be verified that the model $\mathcal{M}_{\CND}'$ is primitive recursively presented.
The following lemmas are proved in the same way as in Subsection~\ref{withND}.

\begin{lem}[Truth Lemma]\label{lem:CND-canonical-truth}
For every $C\in W$ and every $B \in \MF$, we have $C\Vdash B$ if and only if $B\in X_C$.
\end{lem}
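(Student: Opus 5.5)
We argue by induction on the construction of $B$, simultaneously for all $C\in W$, mirroring Lemma~\ref{ND_Truth_Lem}. The only fact about $X_C$ we use is that it is a maximally $\CND$-consistent set. For propositional variables the claim is the definition of $C\Vdash p$. The case of $\bot$ holds because $X_C$ is consistent. The Boolean cases follow from the standard properties of maximally consistent sets: $\neg B\in X_C$ iff $B\notin X_C$, $B\land B'\in X_C$ iff both are in $X_C$, and similarly for $\lor$ and $\to$. In each case we then apply the induction hypothesis to the immediate subformulas.

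The essential case is $B=\Box E$. By the satisfaction clause, $C\Vdash\Box E$ holds iff $E\in\clo(G(C))$, where $\clo=\clo_{\{\N,\mathsf{C},\mathsf{D}\}}$.

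For the direction from right to left, suppose $\Box E\in X_C$. Then $E\in G(C)\subseteq G(C)^{\Lambda}_0\subseteq\clo(G(C))$.

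For the direction from left to right, we prove the analogue of Lemma~\ref{ND_Box_Lem} for $X_C$: if $E\in\clo(G(C))$, then $\Box E\in X_C$. The proof is by induction on the stage $n$ at which $E$ enters $G(C)^\Lambda_n$.
\begin{itemize}
\item At stage $0$, either $E\in G(C)$, so $\Box E\in X_C$ by the definition of $G(C)$, or $E\in\CND$. In the latter case \textsc{Nec} gives $\Box E\in\CND\subseteq X_C$.
\item At later stages $E$ is a conjunction $E_0\land E_1$ of earlier members. The induction hypothesis gives $\Box E_0,\Box E_1\in X_C$, and the axiom $\mathsf{C}$ together with maximal consistency gives $\Box(E_0\land E_1)\in X_C$.
\end{itemize}
Since $\mathsf{4}\notin\Lambda$, there is no $\Box$-clause to treat.

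We expect no step to be a genuine obstacle, since the argument is exactly that of Subsection~\ref{withND}. The one point needing care is that $X_C$ is only the particular primitive recursively constructed maximal extension of $\{C\}$, not an arbitrary point of the canonical model. We therefore check that nothing in the argument used more than maximal $\CND$-consistency of $X_C$ and closure of $X_C$ under $\CND$-provable consequences. Both properties follow from the usual construction, which enumerates all formulas and adds each one or its negation while preserving consistency. That construction is legitimate because $\CND$ is primitive recursive by Lemma~\ref{CND_pr}.
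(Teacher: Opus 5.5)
Your proposal is correct and follows the paper's own route: induction on $B$, with the $\Box$ case handled by $G(C)\subseteq\clo(G(C))$ in one direction and by the analogue of Lemma~\ref{ND_Box_Lem} in the other (stage induction, using \textsc{Nec} for members of $\CND$ at stage $0$ and axiom $\mathsf{C}$ for conjunctions). Your observation that only the maximal $\CND$-consistency of $X_C$ is used is precisely why the paper can say the lemma is proved ``in the same way as in Subsection~\ref{withND}.''
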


\begin{lem}\label{lem:CND-canonical-conditions}
The closure model $\mathcal{M}_{\CND}'$ satisfies $(\mathsf{N})$, $(\mathsf{C})$, and $(\mathsf{D})$.
\end{lem}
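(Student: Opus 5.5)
The plan is to transfer the proof of Lemma~\ref{ND_conditions} to $\mathcal{M}_{\CND}'$. The one new ingredient is the analogue of Lemma~\ref{ND_Box_Lem}: for every $C\in W$ and every $B\in\MF$, if $B\in\clo(G(C))$ then $\Box B\in X_C$. I would prove this by induction on $n$, where $B\in G(C)_n^{\Lambda}$ with $\Lambda=\{\mathsf{N},\mathsf{C},\mathsf{D}\}$.

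For the base case there are two possibilities. If $B\in G(C)$, then $\Box B\in X_C$ by the definition of $G$. If $B\in\CND$, then \textsc{Nec} gives $\Box B\in\CND\subseteq X_C$, because $X_C$ is maximally $\CND$-consistent. For the inductive step, only the $\land$-clause arises, since $\mathsf{4}\notin\Lambda$. Here the axiom $\mathsf{C}$ together with the maximality of $X_C$ gives $\Box(D\land E)\in X_C$ whenever $\Box D,\Box E\in X_C$. The Truth Lemma (Lemma~\ref{lem:CND-canonical-truth}), which I may assume, rests on this same fact.

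Next I verify the three conditions.

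$(\mathsf{C})$ holds by construction: $\clo(G(C))$ is closed under the $\land$-clause.

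$(\mathsf{D})$: suppose $B,\neg B\in\clo(G(C))$. By the fact above, $\Box B,\Box\neg B\in X_C$, which contradicts the axiom $\mathsf{D}$ and the consistency of $X_C$.

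$(\mathsf{N})$ is the step needing the most care, because $W$ is no longer the set of all maximally consistent sets. Instead it consists of the formulas $C$ with $\CND\nvdash\neg C$, each with its own set $X_C$. Let $B$ be valid in $\mathcal{M}_{\CND}'$, and suppose $\CND\nvdash B$. Then $\CND\nvdash\neg\neg B$, so $\neg B\in W$. Since $X_{\neg B}$ extends $\{\neg B\}$ and is consistent, $B\notin X_{\neg B}$. By the Truth Lemma, $\neg B\nVdash B$, contradicting validity. Hence $\CND\vdash B$, and therefore $B\in\CND\subseteq G(C)_0^{\Lambda}\subseteq\clo(G(C))$ for every $C\in W$.

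The main point to check is that every maximally consistent set of the form $X_C$ needed above is actually indexed by some world, and taking $\neg B$ as the index handles this.
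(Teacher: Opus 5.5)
Your proof is correct and follows the paper's route: the paper just says these lemmas are proved as in Subsection~\ref{withND}, i.e., via the analogue of Lemma~\ref{ND_Box_Lem} followed by the arguments of Lemma~\ref{ND_conditions}. Your handling of $(\mathsf{N})$ is the adjustment needed there, since $W$ is no longer the set of all maximally consistent sets. Using the world $\neg B$ as a witness against any non-theorem $B$ supplies this, and it mirrors the argument behind Lemma~\ref{CND_canonical}.
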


\begin{lem}\label{CND_canonical}
For every modal formula $A$, $\CND \vdash A$ if and only if $\mathcal{M}_{\CND}'\models A$.
\end{lem}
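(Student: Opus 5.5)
The plan is to imitate the proof of Lemma~\ref{ND_canonical_completeness}, using Lemma~\ref{lem:CND-canonical-truth} in place of Lemma~\ref{ND_Truth_Lem}. The only new point is that the worlds of $\mathcal{M}_{\CND}'$ are now formulas $C$ with $\CND\nvdash\neg C$, and not arbitrary maximally consistent sets. So in both directions I will pass through the associated maximally $\CND$-consistent sets $X_C$.

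For the direction $(\Rightarrow)$, suppose $\CND\vdash A$ and let $C\in W$. Since $X_C$ is maximally $\CND$-consistent, it contains every theorem of $\CND$, so $A\in X_C$. By Lemma~\ref{lem:CND-canonical-truth}, $C\Vdash A$. As $C$ was arbitrary, $\mathcal{M}_{\CND}'\models A$. Alternatively, one can argue by soundness: by Lemma~\ref{lem:CND-canonical-conditions} and the soundness proposition for closure models, $\CND\subseteq\Th(\mathcal{M}_{\CND}')$.

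For the direction $(\Leftarrow)$, argue contrapositively. Suppose $\CND\nvdash A$. Then $\CND\nvdash\neg\neg A$, because $\neg\neg A\to A$ is a tautology and $\CND$ is closed under modus ponens. Hence the formula $\neg A$ belongs to $W$. The set $X_{\neg A}$ is maximally $\CND$-consistent and extends $\{\neg A\}$, so $A\notin X_{\neg A}$. By Lemma~\ref{lem:CND-canonical-truth}, $\neg A\nVdash A$, and therefore $\mathcal{M}_{\CND}'\not\models A$.

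I expect no real obstacle. The substantive work was already done in Lemma~\ref{CND_pr} and in the Truth Lemma. The only point needing care is bookkeeping: keep the world $\neg A\in W$ (a formula) separate from the set $X_{\neg A}$, and note that the construction of $X_C$ really does extend $\{C\}$, which is guaranteed exactly when $\CND\nvdash\neg C$.
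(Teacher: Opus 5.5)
Your proof is correct and follows the paper's route: the paper proves this lemma exactly as Lemma~\ref{ND_canonical_completeness}, using the Truth Lemma (Lemma~\ref{lem:CND-canonical-truth}) in both directions. Your explicit choice of the world $\neg A\in W$, justified by $\CND\nvdash\neg\neg A$, is the bookkeeping the paper leaves implicit, and it matches how the countermodel is used later in the proof of Theorem~\ref{thmCND}.
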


We conclude the following completeness theorem.

\begin{thm}\label{thm:complete_CND}
For any modal formula $A$, the following are equivalent:
\begin{enumerate}
\item $\CND \vdash A$.
\item $A$ is valid in all closure models satisfying the conditions $(\mathsf{N})$, $(\mathsf{C})$, and $(\mathsf{D})$.
\item $A$ is valid in all primitive recursively presented closure models satisfying the conditions $(\mathsf{N})$, $(\mathsf{C})$, and $(\mathsf{D})$.
\end{enumerate}
\end{thm}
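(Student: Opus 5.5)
We will prove the cycle $1\Rightarrow 2\Rightarrow 3\Rightarrow 1$, using the soundness proposition for closure models and Lemma~\ref{CND_canonical}.

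For $1\Rightarrow 2$, we apply the soundness proposition with $\Lambda=\{\mathsf{N},\mathsf{C},\mathsf{D}\}$. It gives $\CND\subseteq\Th(\mathcal{M})$ for every closure model $\mathcal{M}$ satisfying $(\mathsf{N})$, $(\mathsf{C})$, and $(\mathsf{D})$. The step $2\Rightarrow 3$ is trivial, because the primitive recursively presented models form a subclass of all such models.

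For $3\Rightarrow 1$, we argue by contraposition. Suppose $\CND\nvdash A$. By Lemma~\ref{lem:CND-canonical-conditions}, the model $\mathcal{M}_{\CND}'$ satisfies $(\mathsf{N})$, $(\mathsf{C})$, and $(\mathsf{D})$. By Lemma~\ref{CND_canonical}, $\mathcal{M}_{\CND}'\not\models A$. It therefore remains only to confirm that $\mathcal{M}_{\CND}'$ is primitive recursively presented. This is the one genuinely nontrivial point, and we plan to check it as follows.

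\emph{Membership in $W$ and the valuation.} The relation $C\in W$ is primitive recursive because $\CND$ is primitive recursive (Lemma~\ref{CND_pr}). We fix a primitive recursive enumeration $E_0,E_1,\dots$ of $\MF$. We then build $X_C$ greedily: start from $C$, and at stage $i$ add $E_i$ if $C\land(\text{formulas chosen so far})\land E_i$ has a $\CND$-consistent conjunction, and add $\neg E_i$ otherwise. Each consistency test asks whether $\CND\vdash\neg(\cdots)$, which is primitive recursive. Deciding $B\in X_C$ only requires running the construction up to the index of $B$, so this relation is primitive recursive. In particular, $C\Vdash p$ is primitive recursive.

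\emph{The relation $B\in\clo(G(C))$.} This is the main obstacle, since a priori the closure is an unbounded union. In $\clo_{\{\mathsf{N},\mathsf{C},\mathsf{D}\}}$ the only generating clause is $\land$-closure applied to $G(C)\cup\CND$. Any derivation of $B$ therefore uses only conjunctive components of $B$, that is, subformulas of $B$ obtained by splitting outermost conjunctions. Hence $B\in\clo(G(C))$ if and only if $B$ can be decomposed as a conjunction tree whose leaves all lie in $G(C)\cup\CND$. This is a bounded search over the finitely many ways of parsing $B$ along its $\land$-structure. Each leaf test is primitive recursive, since it asks either $\Box E\in X_C$ or $\CND\vdash E$. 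Consequently the whole relation is primitive recursive.

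With these checks in place, $\mathcal{M}_{\CND}'$ is a primitive recursively presented countermodel to $A$, which completes the contraposition and the proof.
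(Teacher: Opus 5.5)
Your proof is correct and follows the paper's route: soundness for $1\Rightarrow 2$, the trivial inclusion for $2\Rightarrow 3$, and the countermodel $\mathcal{M}_{\CND}'$ together with Lemmas~\ref{lem:CND-canonical-conditions} and~\ref{CND_canonical} for $3\Rightarrow 1$. Your verification that $\mathcal{M}_{\CND}'$ is primitive recursively presented, which the paper only asserts, is also correct: you build $X_C$ greedily using Lemma~\ref{CND_pr}, and you decide $B\in\clo(G(C))=\clo_{\land}(G(C)\cup\CND)$ by recursion on the $\land$-structure of $B$.
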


\section{Arithmetical completeness}\label{Sec4}

In this section, we prove the arithmetical completeness theorem for $\CN$, $\CNP$, $\CNF$, $\CNPF$, and $\CND$ using our results established so far.
Here, we say that a logic $L$ is \textit{arithmetically complete} if $L = \PL(\PR_T)$ for some $\Sigma_1$ provability predicate $\PR_T(x)$ of $T$. 
Before proving the theorem, we introduce several notions, which are used throughout the rest of this paper. 
The method developed in this section is based on techniques that the authors have developed in earlier work \cite{Kogure1,Kogure2,KK,KK2,Kura24}.

Let $\{ \xi_t \}_{t \in \omega}$ be the repetition-free primitive recursive enumeration of all $\LA$-formulas in ascending order of G\"odel numbers. 
We note that if $\xi_u$ is a proper subformula of $\xi_v$, then $u<v$.
We call an $\LA$-formula \textit{propositionally atomic} if it is either atomic or of the form $Q x \psi$, where $Q \in \{\forall, \exists \}$ and $\psi$ is an arbitrary $\LA$-formula.
For each propositionally atomic formula $\varphi$, we prepare a distinct propositional variable $p_{\varphi}$.
We define a primitive recursive injection $I$ from the set of all $\LA$-formulas into a set of propositional formulas as follows:
\begin{itemize}
\item 
$I(\varphi) = p_{\varphi}$ for each propositionally atomic formula $\varphi$,
\item 
$I(\varphi \circ \psi) = I(\varphi) \circ I(\psi)$ for $\circ \in \{ \wedge, \vee, \to \}$, 
\item 
$I(\neg \varphi) = \neg I(\varphi)$. 
\end{itemize}
Let $X$ be a finite set of $\LA$-formulas.
An $\LA$-formula $\varphi$ is called a \textit{tautological consequence} (\textit{t.c.}) of $X$
if $\bigwedge_{\psi \in X}I(\psi) \to I(\varphi)$ is a tautology.  
Let $X \tc \varphi$ denote that $\varphi$ is a t.c.~of $X$.
For each $n \in \omega$, let $F_n$ be the set of all $\LA$-formulas whose G\"odel numbers are less than or equal to $n$. 
We may assume that $F_0 = \emptyset$. 
Let
\[
    P_{T,n} : = \{ \varphi \mid \mathbb{N} \models \exists y \leq \num{n} \ \Proof_T(\gn{\varphi}, y)\},
\]
where $\mathbb{N}$ is the standard model of arithmetic and $\Proof_T(x, y)$ is a standard primitive recursive proof predicate of $T$ naturally expressing that ``$y$ is the G\"{o}del number of a proof of $x$ from $T$''.
Let $\Prov_T(x)$ be the standard provability predicate defined by $\exists y \Proof_T(x,y)$.
If $P_{T,n} \tc \varphi$, then $\varphi$ is clearly provable in $T$.
Notice that $P_{T, n} \subseteq F_n$. 
These facts about the above notions can be formalized and verified in $\PA$.

Next, we prepare a primitive recursive function $h$, which was introduced in \cite{Kura20b}.
The function $h$ is defined step by step using the Recursion Theorem.
It starts with the value $0$. 
If $h(m)=0$ and the associated set $J_m$ is non-empty, then $h(m+1)$ becomes non-zero. 
Once the value becomes non-zero, it remains unchanged thereafter. 

\begin{itemize}
	\item $h(0) = 0$. 
	\item $h(m+1) = \begin{cases} \text{min} \ J_m & \text{if}\ h(m) = 0 \ \text{and} \ J_m \neq \emptyset, \\
				
			h(m) & \text{otherwise},
		\end{cases}$
\end{itemize}
where $J_m = \{ j \in \omega \setminus \{0\} \mid P_{T,m} \tc \neg \lambda(\num{j}) \}$ and $\lambda(x)$ is the $\Sigma_1$ formula $\exists y(h(y) = x)$. 
The function $h$ satisfies the property that for each $m$ and $i$, $h(m+1)=i$ implies $i \leq m+1$, which guarantees that the function $h$ is primitive recursive (See \cite{Kura20b}, p. 603).
The following proposition holds for $h$. 
\begin{prop}[Cf.~{\cite[Lemma 3.2]{Kura20b}}]\label{Prop:h}
\leavevmode
\begin{enumerate}
	\item $\PA \vdash \forall x \forall y(0 < x < y \land \lambda(x) \to \neg \lambda(y))$. 
	\item $\PA \vdash \neg \Con_T \leftrightarrow \exists x(\lambda(x) \land x \neq 0)$. 
	\item For each $i \in \omega \setminus \{0\}$, $T \nvdash \neg \lambda(\num{i})$. 
	\item For each $l \in \omega$, $\PA \vdash \forall x \forall y(h(x) = 0 \land h(x+1) = y \land y \neq 0 \to x \geq \num{l})$. 
\end{enumerate}
\end{prop}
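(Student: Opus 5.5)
We prove the four clauses in order, working throughout inside $\PA$ and using only the definition of $h$ via $J_m$ and the fact (from the Recursion Theorem) that $\PA$ proves the defining recursion equations for $h$. The common engine is the observation that, provably in $\PA$, $h$ starts at $0$ and changes value at most once: formalized induction on $m$ gives $h(m)\neq 0 \to h(m+1)=h(m)$, hence $h(m)\neq 0\to \forall m'\ge m\,(h(m')=h(m))$.

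For clause 1, suppose $0<x<y$ with $\lambda(x)$ and $\lambda(y)$, say $h(a)=x$ and $h(b)=y$. Both values are nonzero, so by the stability property both equal the eventual constant value of $h$. Hence $x=y$, a contradiction.

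For clause 2, the direction from right to left goes as follows. If $h(m+1)=j\neq 0$, then $j\in J_m$, so $P_{T,m}\tc\neg\lambda(\num{j})$ and hence $\Prov_T(\gn{\neg\lambda(\num{j})})$. On the other hand $\lambda(\num{j})$ is a true $\Sigma_1$ sentence, so by formalized $\Sigma_1$-completeness $\Prov_T(\gn{\lambda(\num{j})})$. Together these give $\neg\Con_T$. For the converse, assume $\neg\Con_T$, so $T$ proves $0=1$ with some proof $p$. Take $m\ge p$ large enough that $\neg\lambda(\num 1)\in F_m$; then $P_{T,m}\tc\neg\lambda(\num 1)$ because $0=1\in P_{T,m}$. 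If $h(m)\neq 0$ we are done. Otherwise $J_m\ni 1$ is nonempty, so $h(m+1)=\min J_m\neq 0$, and therefore $\exists x(\lambda(x)\land x\ne0)$.

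Clause 3 is the main point and uses the self-reference. Suppose $T\vdash\neg\lambda(\num i)$ with $i\neq0$. Choose a standard $m$ with $P_{T,m}\tc\neg\lambda(\num i)$; then $i\in J_m$, so $h$ becomes nonzero by stage $m+1$, and actually $h(n)=j$ for some standard $n$ and $j\neq 0$. By clause 2, read in the standard model, $T$ is then inconsistent, contradicting our standing assumption that $T$ is consistent.

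For clause 4, fix $l$. For each standard $x<l$, the fact that $h(x)=0$ and $h(x+1)\neq0$ would mean $J_x\neq\emptyset$, which is a primitive recursive, true-or-false statement about standard numbers. If it failed for every $x<l$, then $\PA$ refutes each such case by direct computation. If instead it held for some standard $x<l$, then $h$ would be nonzero in reality, so by clause 2 in $\mathbb N$ the theory $T$ would be inconsistent, again a contradiction. So $\PA$ proves $\neg(h(\num x)=0\land h(\num x+1)\neq0)$ for each $x<l$. Since $\PA\vdash \forall x(x<\num l\to \bigvee_{k<l}x=\num k)$, clause 4 follows.

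The main obstacle is clause 2, specifically the $\neg\Con_T\to\dots$ direction. Here we must check carefully that a proof of $0=1$ with code at most $m$ makes $\neg\lambda(\num 1)$ a tautological consequence of $P_{T,m}$. This works because $\bot$ tautologically implies everything, but we also need $m$ large enough that the relevant formulas appear, and the whole argument has to be formalized in $\PA$.
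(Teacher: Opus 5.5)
The paper does not reprove this proposition; it cites Lemma~3.2 of the earlier work. Your outline is the standard argument for it. Clause~1 uses the fact that $h$ changes value at most once. Clause~2 uses formalized $\Sigma_1$-completeness in one direction and $J_m\neq\emptyset$ in the other. Clauses~3 and~4 read clause~2 in $\mathbb N$ and use the consistency of $T$.

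Clauses~1, 3, 4 and the right-to-left half of clause~2 are correct. For that half, first use the least number principle and $h(0)=0$ to pick the switching point $m$ with $h(m)=0$, so that $h(m+1)=\min J_m$. Clauses~3 and~4 rely only on this right-to-left half, so they are unaffected by the problem below.

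The step that fails as written is the left-to-right half of clause~2. You claim $P_{T,m}\tc\neg\lambda(\num 1)$ ``because $0=1\in P_{T,m}$'' and ``$\bot$ tautologically implies everything''. But $\tc$ is defined through the translation $I$, and $0=1$ is an atomic $\LA$-formula. So $I(0=1)$ is just the propositional variable $p_{0=1}$, not $\bot$. Containing $0=1$ does not make $P_{T,m}$ propositionally inconsistent, and $\neg p_{\lambda(\num 1)}$ need not follow. Your requirement $\neg\lambda(\num 1)\in F_m$ plays no role in $\tc$.

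The repair is short. In $\PA+\neg\Con_T$, formalized explosion gives $\Prov_T(\gn{\neg\lambda(\num 1)})$, so let $m$ be a $T$-proof of $\neg\lambda(\num 1)$ itself. Then $\neg\lambda(\num 1)\in P_{T,m}$, hence $1\in J_m$, and either $h(m)\neq 0$ or $h(m+1)=\min J_m\neq 0$.

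Alternatively, you can keep your choice $m\ge p$, where $p$ is a proof of $0=1$:
\begin{itemize}
\item $\neg(0=1)$ has a standard $T$-proof $q$.
\item $\PA$ verifies that no $y\le\num q$ is a proof of $0=1$, since this is a true bounded statement when $T$ is consistent. Hence $p>q$.
\item So $P_{T,m}$ contains both $0=1$ and $\neg(0=1)$, and is then genuinely propositionally inconsistent.
\end{itemize}
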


\subsection{The logics $\CN$ and $\CNP$}

In this subsection, we prove the arithmetical completeness of $\CN$ and
$\CNP$.

\begin{thm}\label{thmCN}
Let $L \in \{\CN, \CNP\}$. 
There exists a $\Sigma_1$ provability predicate $\PR_T(x)$ of $T$ satisfying the following properties: 
\begin{enumerate}
    \item \textup{(Arithmetical soundness)} 
    For any $A \in \MF$ and any arithmetical interpretation $f$ based on $\PR_T(x)$, if $L \vdash A$, then $\PA \vdash f(A)$; 
    \item \textup{(Uniform arithmetical completeness)} 
    There exists an arithmetical interpretation $f$ based on $\PR_T(x)$ such that for any $A \in \MF$, $L \vdash A$ if and only if $T \vdash f(A)$.
\end{enumerate}
\end{thm}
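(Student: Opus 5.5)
We follow the Solovay-style method of \cite{Kura24,KK2} and combine it with the finite models of Theorem~\ref{compl_withoutD}. Fix a primitive recursive enumeration $\{A_k\}_{k\in\omega}$ of all modal formulas $A$ with $L\nvdash A$; this is possible by Theorem~\ref{decidability}. For each $A_k$, Theorem~\ref{compl_withoutD} gives a finite closure model $\mathcal{M}_k=(W_k,G_k,\clo_{A_k},\Vdash_k)$ satisfying $(\mathsf{N})$ and $(\mathsf{C})$, and $(\mathsf{P})$ when $L=\CNP$, with $A_k$ false somewhere. The construction of $\mathcal{M}_k$ is uniformly primitive recursive in $k$. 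Take the disjoint union of the $W_k$ with fresh labels $1,2,3,\ldots$, so that each nonzero $i$ names a world of some $\mathcal{M}_k$. The predicate is built from $h$ and $\lambda$ of Proposition~\ref{Prop:h}. The intended interpretation is $f(p):\equiv\exists x(\lambda(x)\land x\neq 0\land x\Vdash p)$, which is $\Sigma_1$ since the model is primitive recursively presented.

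We define $\PR_T(x)$ as the $\Sigma_1$ formula
\[
\exists y\bigl[\,(h(y)=0\land x\in P_{T,y}^{\ast})\ \lor\ (h(y)=i\neq 0\land x\text{ is the }f\text{-image of some }B\in\clo(G(i)))\,\bigr],
\]
where $P^{\ast}_{T,y}$ is a suitable set of formulas provable by proofs of length $\le y$. We let $P^{\ast}_{T,y}$ be the set $\{\varphi \mid P_{T,y}\tc\varphi\}\cap F_y$ closed under finitely many conjunctions, so that $\mathbf{C}$ holds before $h$ leaves $0$. After $h$ jumps to $i$, the behaviour is copied from $\clo(G(i))$. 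For $\CN$ we may instead let every sentence be provable after the jump. For $\CNP$ we must keep $0=1$ unprovable, and $(\mathsf{P})$ gives exactly this, since $\bot\notin\clo(G(i))$. In the same situation we also make every sentence that is not an $f$-image provable, which respects $\mathbf{C}$.

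Provability predicate: if $T\vdash\varphi$, then $\PA\vdash\PR_T(\gn{\varphi})$, since either a standard proof appears while $h=0$, or $h$ jumps. In the latter case, $(\mathsf{N})$ places every formula valid in the relevant model into $\clo(G(i))$. Non-images are handled by the catch-all clause. Conversely, if $\PA\vdash\PR_T(\gn\varphi)$, then $\mathbb N\models\PR_T(\gn\varphi)$ and $h\equiv0$ in $\mathbb{N}$ by Proposition~\ref{Prop:h}.2, so $T\vdash\varphi$.

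Soundness: $\mathbf{C}$ is verified in $\PA$ by cases on whether $h$ has jumped. In the jumped case, $f(B)\land f(C)=f(B\land C)$ and $(\mathsf{C})$ apply; in the unjumped case, the conjunction closure of $P^\ast$ applies. For $\CNP$, $\neg\PR_T(\gn{0=1})$ holds in both cases, since $T$ proves no contradiction by proofs with $h=0$ that are formalizably bounded, using Proposition~\ref{Prop:h}.4. The delicate part is that $f(B)$ being $0=1$ forces $B=\bot$.

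Completeness: we prove in $\PA$ that $\lambda(\num i)\to(f(B)\leftrightarrow \num i\Vdash B)$ for each $i\neq0$ and each $B$, by induction on $B$. The $\Box$ case uses that after the jump $\PR_T(\gn{f(C)})$ iff $C\in\clo(G(i))$; this needs injectivity of $f$ on the relevant formulas, which I would secure by a coding of $p$'s making $f$ injective. If $L\nvdash A$, then $A=A_k$ fails at some $i$. Suppose $T\vdash f(A)$. Then $T\vdash\neg\lambda(\num i)$, contradicting Proposition~\ref{Prop:h}.3.

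The main obstacle is defining $P^{\ast}_{T,y}$ so that $\mathbf{C}$, and $\mathsf{P}$ for $\CNP$, hold uniformly, while also making the ``$T\vdash f(A)$ gives $h$ jumps'' step go through. This requires the jump condition $J_m$ to detect $\neg\lambda(\num j)$ as a t.c.\ of $P_{T,m}$; I would adapt the treatment of this step from \cite{Kura24}.
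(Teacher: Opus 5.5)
Your overall architecture matches the paper's: the function $h$ and $\lambda$, the finite models $\mathcal{M}_k$ from Theorem~\ref{compl_withoutD}, $f(p)$ defined through $\lambda$, and the closing appeal to Proposition~\ref{Prop:h}.3. However, the step that makes the construction work is missing. In the $\Box$ case of your truth lemma you say that after the jump $\PR_T(\gn{f(C)})$ holds iff $C\in\clo(G(i))$, so that only injectivity of $f$ is needed. But $\PR_T(\gn{f(C)})$ can also be witnessed by the \emph{pre-jump} disjunct, and you never exclude that $f(C)$ was already derived from $T$-proofs before $h$ left $0$. Excluding this is the heart of the argument, and it is exactly where $(\mathsf{N})$ is used. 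If $i\nVdash_k\Box C$, then $C\notin\clo_{A_k}(G_k(i))$, so by $(\mathsf{N})$ the formula $C$ is not valid in $\mathcal{M}_k$, and some $j\in W_k$ has $j\nVdash_k C$. The induction hypothesis gives a standard $T$-proof of $\lambda(\num{j})\to\neg f(C)$, and Proposition~\ref{Prop:h}.4 places that proof below the jump stage $m$. Hence a pre-jump derivation of $f(C)$ would give $P_{T,m-1}\tc\neg\lambda(\num{j})$, so $J_{m-1}\neq\emptyset$ and $h(m)\neq 0$, a contradiction. This is why $J_m$ ranges over all $j$ and not only over the eventual $i$. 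You instead spend $(\mathsf{N})$ on the $\mathbf{D1}$ check, where it is not needed, since Proposition~\ref{Prop:h}.4 already puts every standard proof before the jump. You also locate the idea that $J_m$ detects $\neg\lambda(\num{j})$ at the final step, where Proposition~\ref{Prop:h}.3 alone suffices, and you leave it unresolved.

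Your verification of $\mathbf{C}$ has a related hole. You split into the jumped and unjumped cases, but in the jumped case one conjunct $f(B)$ may be provable through the pre-jump clause while the other is $f(C)$ with $C\in\clo(G(i))$. Then $f(B)\land f(C)\equiv f(B\land C)$ is provable only if you know $B\in\clo(G(i))$. Establishing that needs the same $(\mathsf{N})$-type argument, carried out for a possibly nonstandard $i$: split on whether $L\vdash B$, and when $L\nvdash B$ use a standard countermodel world for $B$ to show that $f(B)$ is never derived before the jump. The paper sidesteps this by letting Procedure~2 output every conjunction of previously output formulas, whichever procedure produced them. It pays for this in the truth lemma with the case $C\equiv D\land E$, using $(\mathsf{C})$ of the model to get, say, $i\nVdash_k\Box D$, and then the induction hypothesis for $\Box D$.

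Several details are also wrong as written. First, the pre-jump clause must require $h(y+1)=0$ rather than $h(y)=0$. The set $P_{T,m}$ is the very set whose consequences trigger the jump, and nothing prevents it from being t.c.-inconsistent, which would put $0=1$ and every standard $f(C)$ into $P^{\ast}_{T,m}$. Second, the reason $0=1$ is never derived before the jump is that $P_{T,y}\tc 0=1$ already forces $J_y\neq\emptyset$; it has nothing to do with Proposition~\ref{Prop:h}.4. Third, your alternative for $\CN$ of making every sentence provable after the jump would give $\PA\vdash\PR_T(\gn{0=1})\to\PR_T(\gn{\varphi})$ for all $\varphi$. Then $\Box\bot\to\Box p$ would lie in $\PL(\PR_T)$, although $\CN\nvdash\Box\bot\to\Box p$.
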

\begin{proof}
Let $L \in \{\CN, \CNP\}$. 
By Theorem \ref{decidability}, we have a primitive 
recursive enumeration $\langle A_k  \rangle_{k \in \omega}$ of all $L$-unprovable formulas.
For each $k \in \omega$, by Theorem \ref{compl_withoutD}, we can construct a primitive recursively presented finite closure model $\bigl( W_k, G_k, \clo_{A_k}, \Vdash_k \bigr)$ which falsifies $A_k$ and 
satisfies the conditions corresponding to $L$.
We may assume that the sets $\langle  W_k \rangle_{k \in \omega}$ are pairwise disjoint subsets of $\omega$ and $\bigcup_{k \in \omega} W_k = \omega \setminus \{0\}$. 
We may also assume that for each $i \in \omega \setminus \{0\}$, we can primitive recursively find a unique $k \in \omega$ satisfying $i \in W_k$.

We define the primitive recursive function $g_0$ outputting all $T$-provable formulas step by step.
The definition of $g_0$ consists of Procedures 1 and 2, and starts with Procedure 1.
In Procedure 1, we define the values $g_0(0), g_0(1), \ldots$ 
 by referring to the values $h(0), h(1), \ldots$ and $T$-proofs based on $\Proof_T(x,y)$. 
At the first time $h(m+1) \neq 0$, the definition of $g_0$ switches to Procedure 2 at Stage $m$.  
 
We define the primitive recursive function $g_0$ by using the recursion theorem.  
In the definition of $g_0$, we can use the $\Sigma_1$ formula $\PR_{g_0}(x) \equiv$ $\exists y (g_0(y)=x)$ and the arithmetical interpretation $f_0$ based on $\PR_{g_0}(x)$ such that 
\[
    f_0(p) \equiv \exists x \exists y(\lambda(x) \wedge x \neq 0 \land  x \in W_y \wedge x \Vdash_y p).
\]
Here, for each $A \in \MF$, $f_0(A)$ is primitive recursively computed from $A$. Also, $A$ is primitive recursively computed from $f_0(A)$ because $f_0$ is an injection.

\medskip

\textbf{Procedure 1.}

Stage $m$:
\begin{itemize}
\item If $h(m+1) =0$,
\begin{equation*}
  g_0(m)  = \begin{cases}
       \varphi & \text{if}\ m \text{ is a}\ T \text{-proof of}\ \varphi, \\
               0 & \text{otherwise}.
             \end{cases}
  \end{equation*}

Then, go to Stage $m+1$.

\item If $h(m+1) \neq 0$, go to Procedure 2.
\end{itemize}

\textbf{Procedure 2.}

Suppose $h(m)=0$ and $h(m+1)=i \neq 0$. 
Let $k$ be the number such that $i \in W_k$.
Define 
\begin{equation*}
	g_0(m+t)= \begin{cases} \xi_t & \text{if}\  \xi_t \equiv f_0(B)\  \&\ i \Vdash_k \Box B \ \text{for some}\  B \in \MF, \\
    & \text{or } \xi_t \equiv \varphi \land \psi, \text{ for some } \varphi, \psi \in \{g_0(0), \ldots,g_0(m+t-1) \}, \\
			                    
		0 & \text{otherwise}.
			\end{cases}
	\end{equation*}

We have completed the definition of $g_0$.

\medskip

The following claim ensures that $\PR_{g_0}(x)$ becomes a provability predicate of $T$.
\begin{cl}\label{cl:4-1}
$\PA + \Con_T \vdash \forall x(\PR_{g_0}(x)\leftrightarrow\Prov_T(x))$.
\end{cl}

\begin{proof}
By the construction of Procedure 1 and Proposition \ref{Prop:h}.2, we obtain $\PA + \Con_T \vdash g_0(y)=x \leftrightarrow \Proof_T(x,y)$. 
Thus, $\PA + \Con_T \vdash \PR_{g_0}(x) \leftrightarrow \Prov_T(x)$ holds.
\end{proof}

The following claim guarantees that $\PR_{g_0}(x)$ satisfies the condition $\mathbf{C}$.

\begin{cl}\label{cl:4-2}
For any $\LA$-formulas $\varphi$ and $\psi$, $\PA \vdash \PR_{g_0}(\gn{\varphi}) \land \PR_{g_0}(\gn{\psi}) \to \PR_{g_0}(\gn{\varphi \land \psi})$.
\end{cl}
\begin{proof}
Let $\varphi$ and $\psi$ be any $\LA$-formulas.
Since $\Prov_T(x)$ satisfies $\mathbf{C}$,
 we obtain $\PA + \Con_T \vdash \PR_{g_0}(\gn{\varphi}) \land \PR_{g_0}(\gn{\psi}) \to \PR_{g_0}(\gn{\varphi \land \psi})$ by Claim \ref{cl:4-1}.

We prove $\PA + \neg \Con_T \vdash \PR_{g_0}(\gn{\varphi}) \land \PR_{g_0}(\gn{\psi}) \to \PR_{g_0}(\gn{\varphi \land \psi})$.
We argue in $\PA + \neg \Con_T$: By Proposition \ref{Prop:h}.2, there exist numbers $m$ and $i$ such that $i \neq 0$, $h(m)=0$, and $h(m+1) = i$.
Suppose $\PR_{g_0}(\gn{\varphi})$ and $\PR_{g_0}(\gn{\psi})$ hold, that is, $\varphi$ and $\psi$ are output by $g_0$.
If $\varphi$ is output in Procedure 1, 
then
$\varphi = g_0(l)$ for some $l<m$.
Let $u$ and $s$ be such that  $\xi_u \equiv \varphi$ and $\xi_s \equiv \varphi \land \psi$.
If $\varphi$ is output in Procedure 2, then
$\varphi = g_0(m+s)$ holds.
In either case, it follows that $\varphi \in \{ g_0(0), \ldots, g_0(m+u) \}$.
Since the G\"{o}del number of $\varphi$ is less than that of $\varphi \land \psi$, by the choice of the enumeration $\{\xi_t\}_{t \in \omega}$,
we obtain $\varphi\in \{g_0(0),\ldots,g_0(m+s-1)\}$. 
The same argument applied to $\psi$ yields $\psi\in \{g_0(0),\ldots,g_0(m+s-1)\}$.
Therefore, by the definition of Procedure 2, we have
$g_0(m+s)=\xi_s\equiv \varphi\land\psi$, that is, $\PR_{g_0}(\gn{\varphi \land \psi})$ holds.

By the law of excluded middle, it follows that $\PA \vdash \PR_{g_0}(\gn{\varphi}) \land \PR_{g_0}(\gn{\psi}) \to \PR_{g_0}(\gn{\varphi \land \psi})$.

\end{proof}

\begin{cl}\label{cl:4-3}
If $L=\CNP$, then $\PA \vdash\neg\PR_{g_0}(\gn{0=1})$.
\end{cl}

\begin{proof}
By Claim \ref{cl:4-1}, we obtain $\PA + \Con_T \vdash \neg \PR_{g_0}(\gn{0=1})$.
We prove $\PA + \neg \Con_T \vdash \neg \PR_{g_0}(\gn{0=1})$.
We argue in $\PA + \neg \Con_T$: 
Let $m$, $i$, and $k$ be such that $h(m)=0$, $h(m+1)=i \in W_k$.
Suppose, towards a contradiction, that $\PR_{g_0}(\gn{0=1})$ holds.
We distinguish the following two cases.

If $0=1$ is output by $g_0$ in Procedure 1, then $0=1 \in P_{T,m-1}$.
Thus, $P_{T,m-1}$ is inconsistent, and $P_{T,m-1} \tc \neg \lambda(\num{i})$ holds.
It follows that $J_{m-1} \neq \emptyset$, and hence $h(m) \neq 0$, which is a contradiction.

Suppose $0=1$ is output by $g_0$ in Procedure 2.
Since $0=1$ is not of the form $\varphi \land \psi$, by the definition of $g_0$, 
the formula  $0=1$ is $f_0(A)$ and $i \Vdash_k \Box A$ for some $A \in \MF$.
Since $f_0$ is injective and $f_0(\bot) \equiv 0=1$, the modal formula $A$ is $\bot$.
Hence, we obtain $i \Vdash_k \Box \bot$, which is a contradiction because
 $(W_k, G_k,\clo_{A_k},\Vdash_k)$ satisfies $(\mathsf{P})$ and $i \nVdash_k \Box \bot$. 
Therefore, $0=1$ is not output by $g_0$ in Procedure 2. 
We conclude that $\neg \PR_{g_0}(\gn{0=1})$ holds.
\end{proof}

\begin{cl}\label{cl:4-4}
Let $i \in W_k$ and $B \in \MF$.
\begin{enumerate}
	\item
	If $i \Vdash_k B$, then $\PA \vdash \lambda(\num{i}) \to f_{0}(B)$.
	\item 
	If $i \nVdash_k B$, then $\PA \vdash \lambda(\num{i}) \to \neg f_{0}(B)$.
\end{enumerate}
\end{cl}
\begin{proof}
We prove Clauses 1 and 2 simultaneously by induction on the number of symbols occurring in $B$.
We prove the base case of the induction. 
If $B$ is $\bot$, then Clauses 1 and 2 trivially hold. 
Suppose that $B \equiv p$.
	\begin{enumerate}
	\item  
	If $i \Vdash_k p$, then $\PA \vdash \lambda (\num{i}) \to \bigl(\lambda(\num{i}) \land \num{i} \neq 0 \land  \num{i} \in W_k \wedge \num{i} \Vdash_k p \bigr)$ holds.
	Hence, we get $\PA \vdash \lambda(\num{i}) \to f_0(p)$.
	\item 
	Suppose $i \nVdash_k p$. By Proposition \ref{Prop:h}.1, $\PA \vdash \lambda(\num{i}) \to \forall x (\lambda(x) \wedge x \neq 0 \to x= \num{i})$.
	Thus, we obtain $\PA \vdash \lambda(\num{i}) \to \forall x  (\lambda(x) \wedge x \neq 0 \to x \nVdash_k p)$ and hence $\PA \vdash \lambda(\num{i}) \to \neg f_0(p)$.
	\end{enumerate}
	Next, we prove the induction cases. Since the cases of $\neg$, $\wedge$, $\vee$, and $\to$ can be  easily proved, we consider the case $B \equiv \Box C$.

\medskip

1. Suppose $i \Vdash_k \Box C$. We argue in $\PA + \lambda(\num{i})$: Let $m$ and $k$ be such that $h(m)=0$ and $h(m+1)=i \in W_k$.
Let $s$ be the number satisfying $\xi_s \equiv f_0(C)$.
Since $i \Vdash_k \Box C$, by the definition of $g_0$ we obtain $g_0(m+s) = \xi_s \equiv f_0(C)$. 
Thus, $\PR_{g_0}(\gn{f_0(C)})$, that is, $f_0(\Box C)$ holds.

2. Suppose $i \nVdash_k \Box C$. 
Then, $C \notin \clo_{A_k}(G_k(i))$. 
We distinguish the following two cases.

\medskip

Case 1: $C$ is not of the form $D \land E$.

Since $(W_k, G_k, \clo_{A_k},\Vdash_k)$ satisfies $(\mathsf{N})$, $C$ is not valid in $(W_k, G_k, \clo_{A_k},\Vdash_k)$, that is, $j \nVdash_k C$ for some $j \in W_k$.
By the induction hypothesis, we obtain $\PA \vdash \lambda(\num{j}) \to \neg f_0(C)$.
Let $p$ be a $T$-proof of $\lambda(\num{j}) \to \neg f_0(C)$.

We argue in $\PA + \lambda(\num{i})$:
Let $m$ be such that $h(m)=0$ and $h(m+1)=i \in W_k$.
Suppose, towards a contradiction, that $f_0(C)$ is output by $g_0$.
We distinguish the following two cases.
\begin{enumerate}
    \item[(i)]  $f_0(C)$ is output in Procedure 1:
 Then, $f_0(C) \in P_{T,m-1}$.   
By Proposition \ref{Prop:h}.4, $p < m$. 
So, $\lambda(\num{j}) \to \neg f_0(C) \in P_{T,m-1}$.
It follows from them that $P_{T,m-1} \tc \neg \lambda(\num{j})$.
Hence we obtain $J_{m-1} \neq \emptyset$, and thus $h(m) \neq 0$. 
This is a contradiction.
    \item[(ii)] $f_0(C)$ is output in Procedure 2:
Since $f_0(C)$ is not of the form $\varphi \land \psi$,
 there exists $D \in \MF$ such that $f_0(C) \equiv f_0(D)$ and $i \Vdash_k \Box D$.
Since $f_0$ is injective, we obtain $C \equiv D$ and $i \Vdash_k \Box C$, a contradiction.
\end{enumerate}
Therefore, $\neg \PR_{g_0}(\gn{f_0(C)})$, that is, $\neg f_0(\Box C)$ holds.

\medskip

Case 2: $C$ is of the form $D \land E$.

Since $i \nVdash_k \Box (D \land E)$ and $(W_k, G_k, \clo_{A_k},\Vdash_k)$ satisfies $(\mathsf{C})$, we have that $i \nVdash_k \Box D$ or $i \nVdash_k \Box E$.
Without loss of generality, we may assume that $i \nVdash_k \Box D$.
The induction hypothesis applied to $\Box D$ yields 
\begin{equation}\label{1}
\PA \vdash \lambda(\num{i}) \to \neg f_0(\Box D).
\end{equation}

We argue in $\PA + \lambda(\num{i})$: 
Let $m$ be such that $h(m)=0$ and $h(m+1)=i \in W_k$.
Suppose, towards a contradiction, that  $f_0(C)$ is output by $g_0$.
If $f_0(C)$ is output by $g_0$ in Procedure 1, then we obtain a contradiction by the same argument as in (i) above.

Suppose $f_0(C)$ is output by $g_0$ in Procedure 2.
We distinguish the following two cases.
\begin{itemize}
    \item $f_0(C) \equiv f_0(F)$ and $i \Vdash_k \Box F$ for some $F \in \MF$.  
    Then we obtain $i \Vdash_k \Box C$ as in (ii), a contradiction.
    \item $f_0(C) \equiv \varphi \land \psi$  and $\varphi, \psi \in \{g_0(0), \ldots, g_0(m+s-1)\}$ for some $\varphi$ and $\psi$, where $\xi_s \equiv f_0(C)$.
    
Then, $f_0(C)\equiv (f_0(D) \land f_0(E)) \equiv \varphi \land \psi$, and we obtain $f_0(D) \equiv \varphi$.
Then $f_0(D) \in \{ g_0(0), \ldots, g_0(m+s-1) \}$ and thus $\PR_{g_0}(\gn{f_0(D)})$ holds, which contradicts (\ref{1}).
\end{itemize}
Therefore, $f_0(C)$ is not output by $g_0$, that is, $\neg f_0(\Box C)$ holds. 
\end{proof}
We complete our proof of Theorem \ref{thmCN}. 
By Claims \ref{cl:4-2} and \ref{cl:4-3}, the arithmetical soundness stated in Theorem \ref{thmCN} is obvious.
We prove the implication $(\Leftarrow)$ of Clause 2 of Theorem \ref{thmCN}. 
Suppose $L \nvdash A$. 
Then, $A \equiv A_k$ for some $k \in \omega$ and $i \nVdash_k A$ for some $i \in W_k$. Hence, we obtain $\PA \vdash \lambda(\num{i}) \to \neg f_0(A)$ by Claim \ref{cl:4-4}. Thus, we obtain $T \nvdash f_0(A)$ by Proposition \ref{Prop:h}.3.
\end{proof}

\begin{cor}[The arithmetical completeness of $\CN$]\label{AC_CN}
\begin{align*}
    \CN & = \bigcap \{ \PL(\PR_T) \mid \PR_T(x) \text{ is a provability predicate satisfying } \mathbf{C}  \},\\
    & = \bigcap \{ \PL(\PR_T) \mid \PR_T(x) \text{ is a } \Sigma_1 \text{ provability predicate satisfying }  \mathbf{C} \}.
\end{align*}
Moreover, there exists a $\Sigma_1$ provability predicate $\PR_T(x)$ of $T$ such that $\CN = \PL(\PR_T)$.
\end{cor}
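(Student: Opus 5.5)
The plan is to establish the chain of inclusions
\[
\CN \subseteq \bigcap \{ \PL(\PR_T) \mid \PR_T(x) \text{ provability predicate satisfying } \mathbf{C}\} \subseteq \bigcap \{ \PL(\PR_T) \mid \PR_T(x)\ \Sigma_1 \text{ satisfying } \mathbf{C}\} \subseteq \PL(\PR_{g_0}) \subseteq \CN,
\]
where $\PR_{g_0}(x)$ is the $\Sigma_1$ predicate built in the proof of Theorem~\ref{thmCN} for $L = \CN$. Once this chain is in place, both equalities and the final ``moreover'' clause follow at once.

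The first inclusion is Proposition~\ref{prop:sound-arith}.1: any provability predicate satisfying $\mathbf{C}$ has $\CN \subseteq \PL(\PR_T)$. The second inclusion is trivial, since the second intersection ranges over a subfamily and is therefore at least as large.

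For the third inclusion we only need $\PR_{g_0}(x)$ to be one of the predicates in that family. It is $\Sigma_1$ by construction. It is a provability predicate: if $T\vdash\varphi$, then some standard $m$ is a $T$-proof of $\varphi$, and Proposition~\ref{Prop:h}.4 together with a case split on $\Con_T$ gives $\PA \vdash \PR_{g_0}(\gn{\varphi})$. More precisely, under $\Con_T$, Claim~\ref{cl:4-1} applies. Under $\neg\Con_T$, the switching stage exceeds $m$, so $g_0(m)=\varphi$ is output in Procedure~1. Conversely, if $\PA\vdash\PR_{g_0}(\gn{\varphi})$, then $\PA + \Con_T \vdash \Prov_T(\gn{\varphi})$ by Claim~\ref{cl:4-1}. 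Since $T$ is consistent, $\mathbb{N}\models\Con_T$, and $\Sigma_1$-soundness of $\PA$ yields $T\vdash\varphi$. Finally, $\PR_{g_0}(x)$ satisfies $\mathbf{C}$ by Claim~\ref{cl:4-2}.

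The last inclusion, $\PL(\PR_{g_0}) \subseteq \CN$, is the uniform arithmetical completeness clause of Theorem~\ref{thmCN}. If $\CN\nvdash A$, then the interpretation $f_0$ satisfies $T\nvdash f_0(A)$, so $A\notin\PL(\PR_{g_0})$. Combined with Proposition~\ref{prop:sound-arith}.1 this also gives $\PL(\PR_{g_0}) = \CN$. The only point requiring any care is verifying that $\PR_{g_0}$ really is a provability predicate in the paper's sense, which uses $\PA$-provability and the handling of the $\neg\Con_T$ case; everything else is bookkeeping.
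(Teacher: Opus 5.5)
Your proposal is correct and is the derivation the paper intends: Proposition~\ref{prop:sound-arith}.1 gives the lower bound, and the $\Sigma_1$ predicate $\PR_{g_0}(x)$ with the interpretation $f_0$ from Theorem~\ref{thmCN} (for $L=\CN$) gives the upper bound. Your explicit check that $\PR_{g_0}(x)$ is a provability predicate fills in a detail the paper leaves implicit in Claim~\ref{cl:4-1}. That check uses a case split on $\Con_T$ via Claim~\ref{cl:4-1} and Proposition~\ref{Prop:h}.4, plus $\Sigma_1$-soundness for the converse.
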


\begin{cor}[The arithmetical completeness of $\CNP$]\label{AC_CNP}
\begin{align*}
    \CNP & = \bigcap \{ \PL(\PR_T) \mid \PR_T(x) \text{ satisfies } \mathbf{C} \text{ and } T \vdash \neg \PR_T(\gn{0=1}) \},\\
    & = \bigcap \{ \PL(\PR_T) \mid \PR_T(x) \text{ is } \Sigma_1 \text{ and satisfies } \mathbf{C} \text{ and } T \vdash \neg \PR_T(\gn{0=1}) \}.
\end{align*}
Moreover, there exists a $\Sigma_1$ provability predicate $\PR_T(x)$ of $T$ such that $\CNP = \PL(\PR_T)$.
\end{cor}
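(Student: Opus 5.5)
The plan is to derive Corollary \ref{AC_CNP} from Theorem \ref{thmCN} applied with $L = \CNP$, together with the soundness direction of Proposition \ref{prop:sound-arith}. Write $\mathcal{S}_1$ for the class of all provability predicates satisfying $\mathbf{C}$ and $T \vdash \neg\PR_T(\gn{0=1})$, and $\mathcal{S}_2 \subseteq \mathcal{S}_1$ for its $\Sigma_1$ members. Set $I_j := \bigcap\{\PL(\PR_T) \mid \PR_T \in \mathcal{S}_j\}$. Since $\mathcal{S}_2 \subseteq \mathcal{S}_1$, we have $I_1 \subseteq I_2$. So it suffices to establish the chain
\[
\CNP \subseteq I_1 \subseteq I_2 \subseteq \PL(\PR_{g_0}) \subseteq \CNP
\]
for a suitable $\Sigma_1$ predicate $\PR_{g_0}(x) \in \mathcal{S}_2$. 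This gives both displayed equalities and the ``moreover'' clause at once.

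For $\CNP \subseteq I_1$, take any $\PR_T(x) \in \mathcal{S}_1$. Proposition \ref{prop:sound-arith}.1 gives $\CN \subseteq \PL(\PR_T)$, and Proposition \ref{prop:sound-arith}.3 gives $\neg\Box\bot \in \PL(\PR_T)$. The set $\PL(\PR_T)$ is closed under Modus Ponens and \textsc{Nec} (the latter by $\D{1}$), and it contains the tautologies. It is also closed under uniform substitution, because $f \circ \sigma$ is again an arithmetical interpretation. Hence $\PL(\PR_T)$ is a modal logic containing $\CN$ and $\mathsf{P}$, and so by minimality $\CNP \subseteq \PL(\PR_T)$.

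For the last two inclusions, take the $\Sigma_1$ predicate $\PR_{g_0}(x)$ constructed in Theorem \ref{thmCN} with $L = \CNP$. By Claim \ref{cl:4-1} it is a provability predicate of $T$: for standard $\varphi$, $T \vdash \varphi$ iff $\PA \vdash \PR_{g_0}(\gn{\varphi})$. This uses that $\PA \vdash \neg\Con_T \to \exists x(\lambda(x) \land x \neq 0)$ and that $\Prov_T$ is a provability predicate. A cleaner route is to note directly that $g_0$ outputs only $T$-provable formulas in the standard model, because $\Con_T$ is true and Procedure 2 is never reached. By Claims \ref{cl:4-2} and \ref{cl:4-3}, $\PR_{g_0}$ satisfies $\mathbf{C}$ and $\PA \vdash \neg\PR_{g_0}(\gn{0=1})$. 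Therefore $\PR_{g_0} \in \mathcal{S}_2$, which gives $I_2 \subseteq \PL(\PR_{g_0})$.

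Finally, suppose $\CNP \nvdash A$. Clause 2 of Theorem \ref{thmCN} gives the interpretation $f_0$ based on $\PR_{g_0}$ with $T \nvdash f_0(A)$, so $A \notin \PL(\PR_{g_0})$. This yields $\PL(\PR_{g_0}) \subseteq \CNP$, and combined with $\CNP \subseteq \PL(\PR_{g_0})$ from the previous paragraph we get $\CNP = \PL(\PR_{g_0})$. The only point needing care is verifying that $\PR_{g_0}$ is genuinely a provability predicate in the sense of the definition, which I handle via Claim \ref{cl:4-1} and the truth of $\Con_T$ as described above. Everything else is bookkeeping with the earlier results.
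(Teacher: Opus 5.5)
Your proposal is correct and follows the paper's intended derivation. Soundness over the whole class comes from Proposition~\ref{prop:sound-arith} (together with your check that $\PL(\PR_T)$ is closed under the rules), and the reverse inclusions come from the single $\Sigma_1$ predicate $\PR_{g_0}(x)$ of Theorem~\ref{thmCN} with $L=\CNP$.

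One small precision concerns showing that $\PR_{g_0}(x)$ is a provability predicate. Claim~\ref{cl:4-1} alone only gives $\PA+\Con_T\vdash\PR_{g_0}(\gn{\varphi})$ for $T$-provable $\varphi$. The direction $T\vdash\varphi\Rightarrow\PA\vdash\PR_{g_0}(\gn{\varphi})$ is better obtained from your own observation that $g_0$ runs only Procedure~1 in $\mathbb{N}$. Then, for a $T$-proof $p$ of $\varphi$, $g_0(\num{p})=\gn{\varphi}$ is a true primitive recursive equation and hence provable in $\PA$; alternatively, use Proposition~\ref{Prop:h}.4.
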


\subsection{The logics $\CNF$ and $\CNPF$}

We next prove the arithmetical completeness of $\CNF$ and $\CNPF$. 

\begin{thm}\label{thmCNF}
Let $L \in \{\CNF, \CNPF \}$.
There exists a $\Sigma_1$ provability predicate $\PR_T(x)$ of $T$ satisfying the following properties: 
\begin{enumerate}
    \item \textup{(Arithmetical soundness)} 
    For any $A \in \MF$ and any arithmetical interpretation $f$ based on $\PR_T(x)$, if $L \vdash A$, then $\PA \vdash f(A)$; 
    \item \textup{(Uniform arithmetical completeness)} 
    There exists an arithmetical interpretation $f$ based on $\PR_T(x)$ such that for any $A \in \MF$, $L \vdash A$ if and only if $T \vdash f(A)$.
\end{enumerate}
\end{thm}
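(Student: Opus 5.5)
We follow the proof of Theorem~\ref{thmCN} closely, adding a clause to Procedure~2 that enforces $\D{3}$. By Theorem~\ref{decidability} we fix a primitive recursive enumeration $\langle A_k\rangle_{k\in\omega}$ of all $L$-unprovable formulas. By Theorem~\ref{compl_withoutD} we obtain primitive recursively presented finite closure models $(W_k,G_k,\clo_{A_k},\Vdash_k)$ falsifying $A_k$ and satisfying $(\mathsf{N})$, $(\mathsf{C})$, $(\mathsf{4})$, and also $(\mathsf{P})$ when $L=\CNPF$. The $W_k$ are arranged as a partition of $\omega\setminus\{0\}$. Using $h$ and the recursion theorem we define $g_0$, $\PR_{g_0}(x)\equiv\exists y(g_0(y)=x)$, and $f_0$ exactly as before. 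Procedure~1 is unchanged. In Procedure~2 (with $h(m)=0$, $h(m+1)=i\in W_k$), $g_0(m+t)=\xi_t$ whenever one of the following holds:
\begin{itemize}
\item $\xi_t\equiv f_0(B)$ with $i\Vdash_k\Box B$;
\item $\xi_t\equiv\varphi\land\psi$ with $\varphi,\psi$ already output;
\item $\xi_t\equiv\PR_{g_0}(\gn{\varphi})$ with $\varphi$ already output, i.e.\ $\varphi\in\{g_0(0),\dots,g_0(m+t-1)\}$.
\end{itemize}
Since $\gn{\varphi}$ is smaller than the G\"odel number of $\PR_{g_0}(\gn{\varphi})$ under a standard numbering, the same ordering argument as in Claim~\ref{cl:4-2} applies.

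The claims proceed as follows. Claim~\ref{cl:4-1} goes through verbatim. $\mathbf{C}$ follows as in Claim~\ref{cl:4-2}. For $\D{3}$, under $\Con_T$ we use that $\Prov_T$ satisfies $\D{3}$ together with Claim~\ref{cl:4-1} provably in $\PA+\Con_T$; that is, we argue $\Prov_T(\gn{\varphi})\to\Prov_T(\gn{\Prov_T(\gn\varphi)})$ and transfer, which requires $T\vdash \PR_{g_0}(\gn\varphi)\leftrightarrow\Prov_T(\gn\varphi)$ inside $\Prov_T$. To obtain this, observe that $\PA\vdash\Con_T\to(\PR_{g_0}\leftrightarrow\Prov_T)$, and $\Prov_T(\gn\varphi)\to\Prov_T(\gn{\Con_T\to\Prov_T(\gn\varphi)})$. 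Hence $\Prov_T(\gn{\PR_{g_0}(\gn\varphi)})$ may fail directly. It is simpler to argue that if $\varphi$ is output in Procedure~1 at a stage $<m$, then $\PR_{g_0}(\gn\varphi)$ gets output either in Procedure~1 (a proof of it exists by $\Sigma_1$-completeness; but if it is not found before $m$) or by the new clause in Procedure~2. So we avoid case splitting on $\Con_T$: in $\PA+\neg\Con_T$ the new clause handles everything. In $\PA+\Con_T$, $\varphi=g_0(y)$ gives $\PA\vdash g_0(y)=\gn\varphi\to\PR_{g_0}(\gn\varphi)$, and formalized $\Sigma_1$-completeness gives $\Prov_T(\gn{\PR_{g_0}(\gn\varphi)})$, hence $\PR_{g_0}(\gn{\PR_{g_0}(\gn\varphi)})$. $(\mathsf{P})$ for $\CNPF$ is as in Claim~\ref{cl:4-3}, noting that $0=1$ is neither a conjunction nor of the form $\PR_{g_0}(\cdot)$.

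The main obstacle is the analogue of Claim~\ref{cl:4-4}.2 for $B\equiv\Box C$ with $i\nVdash_k\Box C$. We split into three cases by the shape of $C$. If $C$ is neither a conjunction nor a boxed formula, we argue exactly as in Case~1. If $C\equiv D\land E$, we argue as in Case~2 using $(\mathsf{C})$. The new case is $C\equiv\Box D$, where $f_0(C)\equiv\PR_{g_0}(\gn{f_0(D)})$ might be output by the third clause because $f_0(D)$ was output. Here we use $(\mathsf{4})$: since $\Box D\notin\clo_{A_k}(G_k(i))$, also $D\notin\clo_{A_k}(G_k(i))$, so $i\nVdash_k\Box D$. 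The induction hypothesis applied to $\Box D$ (which has fewer symbols) gives $\PA\vdash\lambda(\num i)\to\neg\PR_{g_0}(\gn{f_0(D)})$, which excludes the third clause. The first clause is excluded by injectivity of $f_0$, and Procedure~1 is excluded as in (i), using $(\mathsf{N})$ to find $j$ with $j\nVdash_k\Box D$. One must also check that the third clause cannot fire for $f_0(C)$ when $C$ is not boxed, and that the conjunction clause cannot fire when $C$ is boxed; both follow from the syntactic form, given that $f_0(p)$ is an existential formula not of the form $\PR_{g_0}(\cdot)$. The final argument via Proposition~\ref{Prop:h}.3 is unchanged.
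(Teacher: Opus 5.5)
Your proposal is correct and follows essentially the paper's proof. It uses the same extra clause $\xi_t\equiv\PR_{g_0}(\gn{\varphi})$ in Procedure~2, the same split on $\Con_T$ for $\D{3}$, and the same new case $C\equiv\Box D$ in the truth claim, handled via $(\mathsf{4})$ and the induction hypothesis for $\Box D$. There are two minor differences. You exclude Procedure~1 in that case via a witness $j$ obtained from $(\mathsf{N})$, whereas the paper reuses $i$ itself since $i\nVdash_k\Box D$. And for the $\Con_T$ half of $\D{3}$ you correctly invoke formalized $\Sigma_1$-completeness for the $\Sigma_1$ sentence $\PR_{g_0}(\gn{\varphi})$, which is the precise content behind the paper's appeal to $\D{3}$ for $\Prov_T$.
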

\begin{proof}
Let $L \in \{\CNF, \CNPF \}$.
By Theorem \ref{decidability}, we have a primitive recursive enumeration $\langle A_k  \rangle_{k \in \omega}$ of all $L$-unprovable formulas.
For each $k \in \omega$, by Theorem \ref{compl_withoutD}, we can construct a primitive recursively presented finite closure model $(W_k, G_k, \clo_{A_k}, \Vdash_k)$ which falsifies $A_k$ and satisfies the conditions corresponding to $L$.
We may assume that the sets $\langle  W_k \rangle _{k \in \omega}$ are pairwise disjoint subsets of $\omega$ and $\bigcup_{k \in \omega} W_k = \omega \setminus \{0\}$. 

In this proof, we use the primitive recursive function $h$ and the $\Sigma_1$ formula $\lambda(x)$ defined in the previous subsection. 
By using the recursion theorem, we define a primitive recursive function $g_1$ outputting all theorems of $T$.
In the definition of $g_1$, we can use the $\Sigma_1$ formula $\PR_{g_1}(x) \equiv$ $\exists y (g_1(y)=x)$ and the arithmetical interpretation $f_1$ based on $\PR_{g_1}(x)$ such that
\[
    f_1(p) \equiv \exists x \exists y (\lambda(x) \wedge x \neq 0 \land  x \in W_y \wedge x \Vdash_y p).
\]

\textbf{Procedure 1.}

Stage $m$:
\begin{itemize}
\item If $h(m+1) =0$,
\begin{equation*}
  g_1(m)  = \begin{cases}
       \varphi & \text{if}\ m\ \text{is a}\ T \text{-proof of}\ \varphi \\
               0 & \text{otherwise}.
             \end{cases}
  \end{equation*}

Then, go to Stage $m+1$.

\item If $h(m+1) \neq 0$, go to Procedure 2.
\end{itemize}

\textbf{Procedure 2.}

Suppose $h(m)=0$ and $h(m+1)=i \neq 0$. 
Let $k$ be the number such that $i \in W_k$.
Define 
\begin{equation*}
	g_1(m+t)= \begin{cases} \xi_t & \text{if}\  \xi_t \equiv f_1(B)\  \&\ i \Vdash_k \Box B \ \text{for some}\  B \in \MF, \\
    & \text{or } \xi_t \equiv \varphi \land \psi \text{ for some } \varphi, \psi \in \{g_1(0), \ldots,g_1(m+t-1) \}, \\
    & \text{or } \xi_t \equiv \PR_{g_1}(\gn{\varphi}) \text{ for some } \varphi \in \{g_1(0), \ldots,g_1(m+t-1) \},  \\
			                    
		0 & \text{otherwise}.
			\end{cases}
	\end{equation*}

We have completed the definition of $g_1$.

The following two claims can be proved in the same way as Claims \ref{cl:4-1} and \ref{cl:4-2}, respectively.

\begin{cl}\label{cl:4-5}
$\PA + \Con_T \vdash \forall x(\PR_{g_1}(x)\leftrightarrow\Prov_T(x))$.
\end{cl}

\begin{cl}\label{cl:4-6}
For any $\LA$-formulas $\varphi$ and $\psi$, $\PA \vdash \PR_{g_1}(\gn{\varphi}) \land \PR_{g_1}(\gn{\psi}) \to \PR_{g_1}(\gn{\varphi \land \psi})$.
\end{cl}

\begin{cl}\label{cl:4-7}
For any $\LA$-formula $\varphi$,
$\PA \vdash \PR_{g_1}(\gn{\varphi}) \to \PR_{g_1}(\gn{\PR_{g_1}(\gn{\varphi})})$.
\end{cl}
\begin{proof}
Since $\Prov_T(x)$ satisfies $\D{3}$, we obtain $\PA + \Con_T \vdash \PR_{g_1}(\gn{\varphi}) \to \PR_{g_1}(\gn{\PR_{g_1}(\gn{\varphi})})$ by Claim \ref{cl:4-5}.

We prove $\PA + \neg \Con_T \vdash \PR_{g_1}(\gn{\varphi}) \to \PR_{g_1}(\gn{\PR_{g_1}(\gn{\varphi})})$.
We argue in $\PA + \neg \Con_T$:
Let $m$ be such that $h(m)=0$ and $h(m+1)=i \in W_k$.
Suppose $\PR_{g_1}(\gn{\varphi})$ holds, that is, $\varphi$ is output by $g_1$.
Let $s$ be such that $\xi_s \equiv \varphi$.

If $\varphi$ is output in Procedure 1, then $g_1(l) =\varphi$ for some $l<m$.
If $\varphi$ is output in Procedure 2, then 
$\varphi = g_1(m+s)$.
In either case, we obtain $\varphi \in \{g_1(0), \ldots, g_1(m+s)  \}$.
Let $u$ be such that $\xi_u \equiv \PR_{g_1}(\gn{\varphi})$.
Since the G\"{o}del number of $\varphi$ is less than that of $\PR_{g_1}(\gn{\varphi})$,
we obtain $s<u$ and $\varphi \in \{g_1(0), \ldots, g_1(m+u) \}$ by the choice of the enumeration $\{ \xi_t\}_{t \in \omega}$.
Hence, we obtain $g_1(m+u) = \PR_{g_1}(\gn{\varphi})$, and $\PR_{g_1}(\gn{\PR_{g_1}(\gn{\varphi})})$ holds.
\end{proof}

\begin{cl}\label{cl:4-8}
If $L=\CNPF$, then $\PA \vdash\neg\PR_{g_1}(\gn{0=1})$.
\end{cl}
\begin{proof}
This claim can be proved in the same way as Claim \ref{cl:4-3}, taking into account that $0=1$ is not of the form $\PR_{g_1}(\gn{\varphi})$.
\end{proof}

\begin{cl}\label{cl:4-9}
Let $i \in W_k$ and $B \in \MF$.
\begin{enumerate}
\item
If $i \Vdash_k B$, then $\PA \vdash \lambda(\num{i}) \to f_{1}(B)$.
\item 
If $i \nVdash_k B$, then $\PA \vdash \lambda(\num{i}) \to \neg f_{1}(B)$.
\end{enumerate}
\end{cl}

\begin{proof}
We prove Clauses 1 and 2 simultaneously by induction on the number of symbols occurring in $B$.
We only give a proof of the case  $B \equiv \Box C$.
We can prove Clause 1 similarly as in the proof of Claim \ref{cl:4-4}. 
We prove Clause 2.
We distinguish the following two cases.

\medskip

 Case 1: $C$ is of the form $\Box D$.

Suppose $i \nVdash_k \Box C$, that is, $i \nVdash_k \Box \Box D$.
Since $(W_k, G_k, \clo_{A_k}, \Vdash_k)$ satisfies $(\mathsf{4})$, we have $i \nVdash_k \Box D$. 
By the induction hypothesis, we obtain
\begin{equation}\label{2}
\PA \vdash \lambda(\num{i}) \to \neg f_1(\Box D).
\end{equation}
Let $p$ be a $T$-proof of $\lambda(\num{i}) \to \neg f_1(\Box D)$.
We argue in $\PA + \lambda(\num{i})$:
Let $m$ be such that $h(m)=0$ and $h(m+1)=i \in W_k$.
If $f_1(\Box D)$ is output in Procedure 1, then $f_1(\Box D) \in P_{T,m-1}$.
By Proposition \ref{Prop:h}.4, we obtain $m>p$ and $\lambda(\num{i}) \to \neg f_1(\Box D) \in P_{T,m-1}$.
Hence, $P_{T,m-1} \tc \neg \lambda(\num{i})$.
Thus, we obtain $h(m)=0$, a contradiction.

Suppose $f_1(\Box D)$ is output in Procedure 2. Then, $g_1(m+t)= \xi_t \equiv f_1(\Box D)$.
If $f_1(\Box D) \equiv f_1(E)$ and $i \Vdash_k \Box E$ for some $E \in \MF$, then it follows that $E \equiv \Box D$.
Hence, we obtain $i \Vdash_k \Box \Box D$, which contradicts $i \nVdash_k \Box \Box D$.
Thus, it follows that $f_1(\Box D) \equiv \PR_{g_1}(\gn{\varphi})$ and $g_1(l)=\varphi$ for some $l < m+t$ and $\varphi$.
Then, $\PR_{g_1}(\gn{\varphi})$, that is, $f_1(\Box D)$ holds, which contradicts (\ref{2}).

\medskip

Case 2: $C$ is not of the form $\Box D$.

In this case, the claim can be proved in the same way as in the proof of Claim \ref{cl:4-4}.2, taking into account that $f_1(C)$ is not of the form $\PR_{g_1}(\gn{\varphi})$.








Therefore, $f_1(C)$ is not output by $g_1$, that is, $\neg f_1(\Box C)$ holds.
\end{proof}

We complete our proof of Theorem \ref{thmCNF}. 
By Claims \ref{cl:4-6} and \ref{cl:4-7}, the arithmetical soundness stated in Theorem \ref{thmCNF} is obvious.
The implication $(\Leftarrow)$ of Clause 2 can be proved in the same way as in the proof of Theorem \ref{thmCNF}. 
\end{proof}

\begin{cor}[The arithmetical completeness of $\CNF$]\label{AC_CN4}
\begin{align*}
    \CNF & = \bigcap \{ \PL(\PR_T) \mid \PR_T(x) \text{ is a provability predicate satisfying } \mathbf{C} \text{ and } \D{3} \},\\
    & = \bigcap \{ \PL(\PR_T) \mid \PR_T(x) \text{ is a } \Sigma_1 \text{ provability predicate satisfying } \mathbf{C} \text{ and }  \D{3} \}.
\end{align*}
Moreover, there exists a $\Sigma_1$ provability predicate $\PR_T(x)$ of $T$ such that $\CNF = \PL(\PR_T)$.
\end{cor}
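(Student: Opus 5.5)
The corollary has three parts: two equalities between $\CNF$ and intersections of provability logics, and the existence of a single $\Sigma_1$ predicate realizing $\CNF$. I will derive all three from Proposition~\ref{prop:sound-arith} and Theorem~\ref{thmCNF} with $L=\CNF$. Write $\mathcal{A}$ for the intersection over all provability predicates satisfying $\mathbf{C}$ and $\D{3}$, and $\mathcal{B}$ for the intersection over the $\Sigma_1$ ones. The plan is to prove the chain $\CNF\subseteq\mathcal{A}\subseteq\mathcal{B}\subseteq\CNF$.

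\emph{First inclusion.} Let $\PR_T(x)$ satisfy $\mathbf{C}$ and $\D{3}$. By Proposition~\ref{prop:sound-arith}, both $L_{\{\N,\mathsf{C}\}}$ and $L_{\{\N,\mathsf{4}\}}$ are contained in $\PL(\PR_T)$. So $\PL(\PR_T)$ contains every instance of $\mathsf{C}$ and of $\mathsf{4}$. It is also a modal logic closed under \textsc{Nec}:
\begin{itemize}
\item it is closed under modus ponens and contains all tautologies, since each $T\vdash f(\cdot)$ is;
\item it is closed under substitution, since composing an interpretation with a substitution gives another interpretation;
\item it is closed under \textsc{Nec} by $\D{1}$.
\end{itemize}
Since $\CNF$ is the smallest such logic, $\CNF\subseteq\PL(\PR_T)$, and hence $\CNF\subseteq\mathcal{A}$.

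\emph{Second inclusion.} $\mathcal{A}\subseteq\mathcal{B}$ holds trivially, because $\mathcal{B}$ intersects over a subclass of the predicates used for $\mathcal{A}$.

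\emph{Third inclusion, and the final clause.} Let $\PR_{g_1}(x)$ and $f_1$ be the predicate and interpretation from Theorem~\ref{thmCNF} for $L=\CNF$. First I check that $\PR_{g_1}(x)$ belongs to the class defining $\mathcal{B}$.
\begin{itemize}
\item It is $\Sigma_1$ by construction.
\item It is a provability predicate. If $T\vdash\varphi$, then $\Prov_T(\gn{\varphi})$ is a true $\Sigma_1$ sentence. Also $\neg\Con_T\to\PR_{g_1}(\gn{\varphi})$ is provable in $\PA$, since Procedure 2 still outputs everything that Procedure 1 outputs up to that stage, together with Claim~\ref{cl:4-5}. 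Hence $\PA\vdash\PR_{g_1}(\gn{\varphi})$. Conversely, if $\PA\vdash\PR_{g_1}(\gn{\varphi})$, then the sentence is true. Since $\Con_T$ is true, Proposition~\ref{Prop:h}.2 shows that $h$ is constantly $0$ and $g_1$ only lists $T$-theorems, so $T\vdash\varphi$.
\item It satisfies $\mathbf{C}$ by Claim~\ref{cl:4-6} and $\D{3}$ by Claim~\ref{cl:4-7}.
\end{itemize}
Now let $A\notin\CNF$. The uniform completeness clause gives $T\nvdash f_1(A)$, so $A\notin\PL(\PR_{g_1})$ and therefore $A\notin\mathcal{B}$. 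This gives $\mathcal{B}\subseteq\CNF$ and closes the chain.

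For the final clause, the first inclusion applied to $\PR_{g_1}$ gives $\CNF\subseteq\PL(\PR_{g_1})$. The argument just given gives $\PL(\PR_{g_1})\subseteq\CNF$. Hence $\PL(\PR_{g_1})=\CNF$.

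The only step needing care is verifying that $\PR_{g_1}$ is a provability predicate in the sense of the paper, which requires $\PA$-provability and not merely truth. I would handle it by the case split on $\Con_T$ versus $\neg\Con_T$ inside $\PA$, exactly as in Claim~\ref{cl:4-5}.
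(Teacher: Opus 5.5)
Your proposal is correct and follows the route the paper intends. The paper gives no separate proof of this corollary. Soundness comes from Proposition~\ref{prop:sound-arith}, together with the observation that $\PL(\PR_T)$ is a modal logic closed under \textsc{Nec}. The reverse inclusions and the final clause come from the predicate $\PR_{g_1}(x)$ and the interpretation $f_1$ of Theorem~\ref{thmCNF}.

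One sub-step is misjustified, although the claim it supports is true. You need to show that $T\vdash\varphi$ implies $\PA\vdash\PR_{g_1}(\gn{\varphi})$. Your reason for $\PA\vdash\neg\Con_T\to\PR_{g_1}(\gn{\varphi})$ is that Procedure 2 keeps the earlier outputs, and that does not settle it. What has to be shown is the following. Let $p$ be the (standard) code of a $T$-proof of $\varphi$, and work in $\PA+\neg\Con_T$. The stage $m$ at which $h$ becomes non-zero must lie beyond $p$, so that $g_1(p)=\varphi$ was produced by Procedure 1. That is exactly Proposition~\ref{Prop:h}.4 with $l=p+1$.

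Alternatively, you can drop the case split altogether. Since $T$ is consistent, $h$ is identically $0$ in $\mathbb{N}$. So $g_1(\num{p})=\gn{\varphi}$ is a true closed equation between primitive recursive terms, hence provable in $\PA$. This gives $\PA\vdash\PR_{g_1}(\gn{\varphi})$ directly.
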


\begin{cor}[The arithmetical completeness of $\CNPF$]\label{AC_CNPF}
\begin{align*}
    \CNPF & = \bigcap \{ \PL(\PR_T) \mid \PR_T(x) \text{ satisfies } \mathbf{C}, \D{3}, \text{ and } T \vdash \neg \PR_T(\gn{0=1}) \},\\
    & = \bigcap \{ \PL(\PR_T) \mid \PR_T(x) \text{ is } \Sigma_1 \text{ and satisfies } \mathbf{C}, \D{3}, \text{ and } T \vdash \neg \PR_T(\gn{0=1}) \}.
\end{align*}
Moreover, there exists a $\Sigma_1$ provability predicate $\PR_T(x)$ of $T$ such that $\CNPF = \PL(\PR_T)$.
\end{cor}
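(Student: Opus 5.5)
The corollary has three parts: the intersection over all provability predicates satisfying $\mathbf{C}$, $\D{3}$, and $T\vdash\neg\PR_T(\gn{0=1})$; the same intersection restricted to $\Sigma_1$ predicates; and the existence of a single $\Sigma_1$ predicate whose logic is exactly $\CNPF$. I would prove them by sandwiching both intersections between $\CNPF$ and $\PL(\PR_{g_1})$, where $g_1$ is the function built in the proof of Theorem~\ref{thmCNF} for $L=\CNPF$.

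\textbf{Lower bound.} Let $\PR_T(x)$ be any provability predicate satisfying $\mathbf{C}$, $\D{3}$, and $T\vdash\neg\PR_T(\gn{0=1})$. Proposition~\ref{prop:sound-arith} gives $\CN\subseteq\PL(\PR_T)$, $L_{\{\N,\mathsf{4}\}}\subseteq\PL(\PR_T)$, and $L_{\{\N,\mathsf{P}\}}\subseteq\PL(\PR_T)$. Hence every instance of $\mathsf{C}$, $\mathsf{4}$, and $\mathsf{P}$ lies in $\PL(\PR_T)$.

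It remains to check that $\PL(\PR_T)$ is a modal logic closed under \textsc{Nec}.
\begin{itemize}
\item It contains all tautologies and is closed under Modus Ponens, since both facts hold in $T$ under every interpretation.
\item It is closed under uniform substitution, because for any interpretation $f$ and substitution $\sigma$, the map $f\circ\sigma$ is again an arithmetical interpretation based on $\PR_T(x)$.
\item It is closed under \textsc{Nec}, because $T\vdash f(A)$ implies $\PA\vdash\PR_T(\gn{f(A)})$ by the definition of a provability predicate.
\end{itemize}
By minimality of $L_{\{\N,\mathsf{C},\mathsf{P},\mathsf{4}\}}$, we get $\CNPF\subseteq\PL(\PR_T)$. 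This yields $\CNPF\subseteq\bigcap\{\cdots\}$ for both families, and the $\Sigma_1$ family is a subfamily of the general one.

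\textbf{Upper bound.} I would use Theorem~\ref{thmCNF} for $L=\CNPF$, which supplies the $\Sigma_1$ predicate $\PR_{g_1}(x)$. Claim~\ref{cl:4-5} together with Proposition~\ref{Prop:h} should show that $\PR_{g_1}(x)$ is a provability predicate of $T$; this is the step needing the most care.
\begin{itemize}
\item If $T\vdash\varphi$, then $\varphi$ is output in Procedure~1 at its proof stage, provably in $\PA$, unless Procedure~2 has already started. In that case one still needs $\varphi$ to be output, which follows in $\PA$ from $\PA+\Con_T$ (by Claim~\ref{cl:4-5}) and, under $\neg\Con_T$, from an inspection of Procedure~2.
\item Conversely, if $\PA\vdash\PR_{g_1}(\gn{\varphi})$, then $\PR_{g_1}(\gn{\varphi})$ is a true $\Sigma_1$ sentence. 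Since $T$ is consistent, $h$ stays at $0$ in reality by Proposition~\ref{Prop:h}.2, so $g_1$ only ever enumerates $T$-theorems and $T\vdash\varphi$.
\end{itemize}

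Next I would verify the three conditions for $\PR_{g_1}(x)$. Claims~\ref{cl:4-6} and \ref{cl:4-7} give $\mathbf{C}$ and $\D{3}$, and Claim~\ref{cl:4-8} gives $\PA\vdash\neg\PR_{g_1}(\gn{0=1})$. So $\PR_{g_1}(x)$ belongs to the $\Sigma_1$ family, and the lower bound already gives $\CNPF\subseteq\PL(\PR_{g_1})$.

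For the reverse inclusion, clause~2 of Theorem~\ref{thmCNF} provides $f_1$ with $T\vdash f_1(A)$ iff $\CNPF\vdash A$. If $A\in\PL(\PR_{g_1})$, then in particular $T\vdash f_1(A)$, so $\CNPF\vdash A$. Therefore $\PL(\PR_{g_1})=\CNPF$, which proves the ``moreover'' clause.

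\textbf{Conclusion.} Combining the bounds gives
\[
\CNPF\subseteq\bigcap\{\text{all}\}\subseteq\bigcap\{\Sigma_1\}\subseteq\PL(\PR_{g_1})=\CNPF,
\]
so both displayed equalities hold.
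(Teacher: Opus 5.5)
Your sandwich argument is the intended one. The paper leaves this corollary as an immediate consequence of Proposition~\ref{prop:sound-arith} together with Theorem~\ref{thmCNF} for $L=\CNPF$. The following parts of your proof are fine:
\begin{itemize}
\item the lower bound, i.e.\ checking that $\PL(\PR_T)$ is a modal logic closed under \textsc{Nec} containing $\mathsf{C}$, $\mathsf{4}$, $\mathsf{P}$;
\item your use of Claims~\ref{cl:4-6}--\ref{cl:4-8};
\item the upper bound via $f_1$;
\item the converse half of ``$\PR_{g_1}(x)$ is a provability predicate''.
\end{itemize}

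The one step whose justification fails is the forward half, $T\vdash\varphi\Rightarrow\PA\vdash\PR_{g_1}(\gn{\varphi})$. There you dispose of the subcase ``Procedure~2 has already started before the proof stage, and $\neg\Con_T$'' by ``an inspection of Procedure~2''. Procedure~2 outputs only formulas of three special shapes: $f_1(B)$ with $i\Vdash_k\Box B$, conjunctions of earlier outputs, and $\PR_{g_1}(\gn{\psi})$ for earlier outputs $\psi$. An arbitrary $T$-theorem has none of these shapes, for instance an atomic sentence such as $0=0$ all of whose proofs come after the switch. So inspecting Procedure~2 cannot show that $\varphi$ is output.

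The argument survives because that subcase is vacuous, provably in $\PA$. Let $p$ be a $T$-proof of $\varphi$. Proposition~\ref{Prop:h}.4 with $l=p+1$ shows that $\PA$ proves every switching stage $m$ (with $h(m)=0\neq h(m+1)$) satisfies $m\geq p+1$. Hence $h(\num{p+1})=0$ and $g_1(\num{p})=\gn{\varphi}$ in $\PA$, with no case split on $\Con_T$ needed.

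More simply, reuse the observation from your converse direction. Since $T$ is consistent, $h$ is identically $0$ in $\mathbb{N}$, so $\mathbb{N}\models\PR_{g_1}(\gn{\varphi})$ iff $T\vdash\varphi$. Then $\Sigma_1$-completeness of $\PA$ gives the forward direction, just as soundness of $\PA$ gave the converse. This is also how Claim~\ref{cl:4-5}, being true in $\mathbb{N}$, certifies that $\PR_{g_1}(x)$ is a provability predicate. With that repair your proof is complete.
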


\subsection{The logic $\CND$}

We finally prove the arithmetical completeness theorem for $\CND$.  In this
subsection, we use a primitive recursively presented countermodel for
$\CND$ given in Section~\ref{Sec3}.

\begin{thm}\label{thmCND} 
There exists a $\Sigma_1$ provability predicate $\PR_T(x)$ of $T$ satisfying the following properties: 
\begin{enumerate}
    \item \textup{(Arithmetical soundness)} 
    For any $A \in \MF$ and any arithmetical interpretation $f$ based on $\PR_T(x)$, if $\CND \vdash A$, then $\PA \vdash f(A)$; 
    \item \textup{(Uniform arithmetical completeness)} 
    There exists an arithmetical interpretation $f$ based on $\PR_T(x)$ such that for any $A \in \MF$, $\CND \vdash A$ if and only if $T \vdash f(A)$.
\end{enumerate}
\end{thm}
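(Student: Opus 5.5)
The plan is to repeat the construction of Theorem \ref{thmCN}, with the finite countermodels replaced by the single primitive recursively presented model $\mathcal{M}_{\CND}'=(W,G,\clo,\Vdash)$ of Subsection \ref{CND}. By Lemma \ref{CND_pr}, $W=\{C\mid \CND\nvdash\neg C\}$ is primitive recursive, so we can code $W$ bijectively onto $\omega\setminus\{0\}$. By Lemma \ref{CND_canonical}, every $\CND$-unprovable $A$ is refuted at some $i\in W$.

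We reuse $h$ and $\lambda$ and define, by the recursion theorem, a primitive recursive $g_2$ together with the interpretation $f_2(p)\equiv\exists x(\lambda(x)\land x\neq 0\land x\Vdash p)$. Procedure 1 outputs $T$-proofs while $h(m+1)=0$. In Procedure 2, with $h(m+1)=i$, we put $g_2(m+t)=\xi_t$ exactly when one of the following holds:
\begin{itemize}
\item $\xi_t\equiv f_2(B)$ for some $B$ with $i\Vdash\Box B$, that is, $B\in\clo(G(i))$;
\item $\xi_t\equiv\varphi\land\psi$ with $\varphi,\psi$ already output.
\end{itemize}
The analogues of Claims \ref{cl:4-1} and \ref{cl:4-2} should go through verbatim, giving a provability predicate with $\mathbf{C}$.

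The truth lemma (the analogue of Claim \ref{cl:4-4}) should also carry over. Case 1 uses $(\mathsf{N})$ to find a world $j$ refuting $C$, and Case 2 uses $(\mathsf{C})$. For Case 1 we need the world $j$ to be effectively found, and the corresponding fact $\PA\vdash\lambda(\num{j})\to\neg f_2(C)$ to have some standard $T$-proof $p$. Since $i$ is fixed in each instance, this is fine. Completeness then follows as before from Proposition \ref{Prop:h}.3.

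The main obstacle is arithmetical soundness of $\mathsf{D}$, i.e.\ $\PA\vdash\neg(\PR_{g_2}(\gn{\varphi})\land\PR_{g_2}(\gn{\neg\varphi}))$ for arbitrary sentences $\varphi$. Under $\Con_T$ this follows from consistency of $T$. Under $\neg\Con_T$ we argue as follows. Since $\neg\varphi$ is not a conjunction, it must come from the base set $Y=P_{T,m-1}\cup\{f_2(B)\mid B\in\clo(G(i))\}$, while $\varphi$ lies in the $\land$-closure of $Y$. It therefore suffices to show, in $\PA$, that every finite subset of $Y$ is propositionally consistent.

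For the modal part of $Y$, the key facts are these. If $B_1,\dots,B_n\in\clo(G(i))$, then $\Box(B_1\land\cdots\land B_n)\in X_i$. By $\mathsf{N}$ and $\mathsf{D}$, the conjunction $E=B_1\land\cdots\land B_n$ is then $\CND$-consistent. Hence $E\in W$ is a world $j$ with $j\Vdash E$, so a formalized truth lemma should yield $\lambda(j)\to f_2(E)$.

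The consistency of $P_{T,m-1}\cup\{f_2(E)\}$ must then come from $h(m)=0$, i.e.\ $J_{m-1}=\emptyset$, so that $P_{T,m-1}\nvdash^{\mathrm{tc}}\neg\lambda(j)$. What I would try first is to prove a uniform, formalized version of Claim \ref{cl:4-4} inside $\PA$. Concretely, $\PA\vdash\forall j\in W\,(\lambda(j)\land \mathrm{True}(j\Vdash E)\to f_2(E))$ should hold uniformly in $E$, by induction on $E$ using a partial truth definition for the relevant fragment. Combined with Proposition \ref{Prop:h}.4, this would bound the proof lengths appropriately.

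If that uniformity fails, the fallback is to modify Procedure 2. It would output $f_2(B)$ only when $f_2(B)$ is propositionally consistent with $P_{T,m-1}$ together with the outputs so far. One then checks that this extra check never blocks a formula in $\clo(G(i))$, using the fact that every world whose $\lambda$ is refuted only beyond stage $m$ is unaffected.
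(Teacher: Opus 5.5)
Your reduction of the $\mathsf{D}$ case is the right one. Since $\neg\varphi$ is not a conjunction, it lies in $Y=P_{T,m-1}\cup\{f_2(B)\mid B\in\clo(G(i))\}$, while $\varphi$ lies in the $\land$-closure of $Y$. But the step that carries the theorem is left open, and the justification you sketch for it does not go through. You try to show that $P_{T,m-1}\cup\{f_2(E)\}$ is propositionally consistent for \emph{every} $E\in\clo(G(i))$, including nonstandard $E$ with nonstandard witnesses $j\Vdash E$. Proposition~\ref{Prop:h}.4 only says that $m$ exceeds every standard number. So it places a $T$-proof of $\lambda(\num{j})\to f_2(E)$ in $P_{T,m-1}$ only when that proof is standard. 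A single $\PA$-theorem $\forall j\,\forall E\,(\lambda(j)\land j\Vdash E\to\mathrm{True}(\gn{f_2(E)}))$ does not help either. It does not make the instance $\lambda(\num{j})\to f_2(E)$ a tautological consequence of $P_{T,m-1}$, because instantiation and the Tarski biconditional for a nonstandard formula are not t.c.\ steps, and their proofs grow with $E$ and need not lie below $m$. Your fallback, filtering Procedure~2 by consistency, presupposes exactly the missing consistency fact. It would also force you to recheck $\mathbf{C}$ and the truth lemma for the modified construction.

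The missing idea is that $\varphi$ is a fixed standard sentence, so no nonstandard $E$ ever arises. Work in $\PA+\neg\Con_T$ with $h(m)=0$ and $h(m+1)=i$, and do an external case analysis over the finitely many ways $\varphi$ and $\neg\varphi$ can have been generated. Every leaf is either in $P_{T,m-1}$ or is $f_2(B_k)$ with $B_k\in\clo(G(i))$. Each $B_k$ is standard, since $f_2(B_k)$ is a subformula of $\varphi$ or $\neg\varphi$ and $f_2$ is injective. Hence $E:\equiv B_1\land\cdots\land B_n$ is standard, $E\in\clo(G(i))$, and $P_{T,m-1}\tc\neg f_2(E)$. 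Now decide externally whether $\CND\vdash\neg E$:
if so, then $\neg E\in\clo(G(i))$ together with the formalized $(\mathsf{D})$ of $\mathcal{M}_{\CND}'$ gives a contradiction;
if not, there is a standard world $j$ with $j\Vdash E$, and your fixed-world truth lemma gives a standard $T$-proof of $\lambda(\num{j})\to f_2(E)$, which lies in $P_{T,m-1}$ by Proposition~\ref{Prop:h}.4.
In the second case $P_{T,m-1}\tc\neg\lambda(\num{j})$, contradicting $h(m)=0$. The case $n=0$ is the same, with $P_{T,m-1}$ itself inconsistent.

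With this step your route closes, and it avoids the paper's extra machinery. The paper replaces $h$ by $h'$, proves the truth lemma uniformly in the world variable (Claim~\ref{cl:4.12}), and splits Claim~\ref{cl:4.14} according to whether $\varphi$ is in the range of $f_2$. The mixing of $P_{T,m-1}$-leaves with $f_2$-leaves that you flagged is exactly what Claim~\ref{cl:4.13} passes over, since its induction hypothesis is applied to conjuncts that may lie in the range of $f_2$. Take $\varphi\equiv f_2(p)\land(0=0)$ and a world $i$ with $p\in\clo(G(i))$. In any model of $\PA+\lambda'(\num{i})$ the formula $\varphi$ is output, yet $P_{T,m-1}\tc\varphi$ would put a standard $j\nVdash p$ into $J'_{m-1}$. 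So an argument like the one above is what Case~2 of Claim~\ref{cl:4.14} actually needs.
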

\begin{proof}
By Theorem \ref{CND_pr}, we have a primitive recursive enumeration $\langle A_k  \rangle_{k \geq 1}$ of all $\CND$-unprovable formulas.
Let $\mathcal{M}_{\CND}' = \bigl( W, G, \clo, \Vdash \bigr)$ be the primitive recursively presented closure model for $\CND$ given in Subsection \ref{CND}. 
By the construction of the model, we may assume that $W = \omega \setminus \{0\}$ and $k \nVdash A_k$.


In this proof, we use the function $h'$, which is originally introduced in \cite{Kogure1}. 
The function $h'$ is defined by using the recursion theorem as follows: 

\begin{itemize}
	\item $h'(0) = 0$. 
	\item $h'(m+1) = \begin{cases} \text{min} \ J'_m & \text{if}\ h'(m) = 0 \ \text{and} \ J'_m \neq \emptyset, \\ 
						
		h'(m) & \text{otherwise},
		\end{cases}$
\end{itemize}
where 
\begin{align*}
J'_m = \{ j \in &  W  \mid  P_{T,m} \tc \neg \lambda'(\num{j}) \ \text{or}\ \\
&   \exists B[ B \in \Sub(A_j) \ \& \ P_{T,m} \tc \forall x \varphi_B(x) \wedge \bigl (\varphi_B(\num{j}) \to \neg \lambda'(\num{j}) \bigr)]\}.
\end{align*}
Here, $\lambda'(x)$ and $\varphi_B(x)$ are formulas 
\[
\exists y( h'(y)=x) \ \text{and} \ ( x \neq 0 \land x \nVdash B  ) \to \neg \lambda'(x),
\]
respectively.

The function $h'$ satisfies the property that for each $m$ and $i$, $h'(m+1)=i$ implies $i \leq m+1$, which guarantees that the function $h'$ is primitive recursive (See \cite[Claim 5.1]{Kogure1}).

The following proposition holds for $h'$.

\begin{prop}[Cf.~{\cite[Proposition 5.1]{Kogure1}}]\label{prop:h'}
	\leavevmode

\begin{enumerate}
\item 
$\PA \vdash \forall x \forall y \bigl( 0 < x < y \wedge \lambda'(x) \to \neg \lambda'(y) \bigr)$.
\item 
$\PA \vdash \neg \Con_{T} \leftrightarrow \exists x \bigl(\lambda'(x) \wedge x \neq 0 \bigr)$.
\item 
For each $i \in \omega \setminus \{ 0\}$, $T \nvdash \neg \lambda'(\num{i})$.
\item 
For each $l \in \omega$, $\PA \vdash \forall x \forall y \bigl( h'(x) =0 \wedge h'(x+1)=y \wedge y \neq 0 \to x \geq l \bigr)$.
\end{enumerate}
\end{prop}

\medskip








			                    


Next, by the recursion theorem, we define a primitive recursive function
$g_2$ outputting all theorems of $T$. 
In this definition we may use the formula $\PR_{g_2}(x)\equiv \exists y(g_2(y)=x)$ and the arithmetical interpretation $f_2$ based on $\PR_{g_2}(x)$ given by
\[
    f_2(p)\equiv
    \exists x(\lambda'(x)\land x\neq 0\land x\Vdash p).
\]
The definition of $g_2$ is obtained from the definition of $g_0$ in the proof
of Theorem~\ref{thmCN} by replacing $h$ and $f_0$ with $h'$ and $f_2$, respectively.

The following two claims can be proved in the same way as Claims \ref{cl:4-1} and \ref{cl:4-2}, respectively.
\begin{cl}\label{cl:4.10}
$\PA+ \Con_T\vdash \forall x(\PR_{g_2}(x)\leftrightarrow\Prov_T(x))$.
\end{cl}

\begin{cl}\label{cl:4.11}
For any $\LA$-formulas $\varphi$ and $\psi$, $\PA \vdash \PR_{g_2}(\gn{\varphi}) \land \PR_{g_2}(\gn{\psi}) \to \PR_{g_2}(\gn{\varphi \land \psi})$.
\end{cl}

\begin{cl}\label{cl:4.12}
Let $B \in \MF$.
\begin{enumerate}
\item 
$\PA \vdash \exists x ( x \neq 0 \land \lambda'(x)   \wedge x \Vdash B)  \to f_2(B)$.
\item 
$\PA \vdash \exists x ( x \neq 0 \land \lambda'(x) \wedge  x \nVdash B)   \to \neg f_2(B)$,

 that is, $\PA \vdash \exists x \neg \varphi_B(x) \to \neg f_2(B)$.
\end{enumerate}
\end{cl}
\begin{proof}
We prove Clauses 1 and 2 simultaneously by induction on the number of symbols occurring in $B$.
In the case $B \equiv \bot$, Clauses 1 and 2 are easily proved.
Suppose $B \equiv p$. Clause 1 holds trivially. We only give a proof of Clause 2. 
By Proposition \ref{prop:h'}.1, we obtain 
\begin{align*}
\PA \vdash x \neq 0 \land \lambda'(x) \wedge x \nVdash p &\to ( \lambda'(z) \wedge z\neq 0 \to z=x).
\end{align*}
It follows that 
\[
\PA \vdash \exists x ( x \neq 0 \land  \lambda'(x) \wedge x \nVdash p  ) \to \neg f_2(p).
\]
Next, we prove the induction cases. 
We only prove the case $B \equiv \Box C$.
Clause 1 can be proved in the same way as in Claim \ref{cl:4-4}.1.
We prove Clause 2.
We distinguish the following two cases.

\medskip

Case 1. $C$ is of the form $D \land E$:

By the induction hypothesis applied to $\Box D$ and $\Box E$,
 we obtain 
\begin{equation}\label{(3)}
\PA \vdash \exists x \neg \varphi_{D}(x) \to \neg f_2(\Box D), \quad \PA \vdash \exists x \neg \varphi_{E}(x) \to \neg f_2(\Box E).
\end{equation}

If $\CND \vdash C$, 
then $\CND \vdash \Box C$.
Hence, $\PA \vdash \forall x (x \neq 0 \to x \Vdash \Box C)$ holds by formalizing the soundness of $\CND$.
Thus, Clause 2 trivially holds.
Hence, we may assume that $\CND \nvdash C$.
Then, there exists $j \in W$ such that $C \equiv A_j$ and $j \nVdash C$. 
It follows that
$\PA \vdash \num{j} \neq 0 \wedge \num{j} \nVdash C$, which implies that $\PA \vdash \varphi_C(\num{j}) \to \neg \lambda'(\num{j})$. 
Also, by the induction hypothesis, $\PA \vdash \exists x \neg \varphi_C(x) \to \neg f_2(C)$ holds.
Let $p$ and $q$ be $T$-proofs of $\varphi_C(\num{j}) \to \neg \lambda'(\num{j})$ and $\exists x \neg \varphi_C(x) \to \neg f_2(C)$ respectively.

We work in $\PA$: 
Let $i$ be such that $i \neq 0$, $\lambda'(\num{i})$, and $i \nVdash \Box C$.
 It follows that $i \nVdash \Box D$ or $i \nVdash \Box E$ because $(W,G,\clo,\Vdash)$ satisfies $(\mathsf{C})$ and $C \equiv D \land E$.
Without loss of generality, we may assume that $i \nVdash \Box D$.
Let $m$ be such that $h'(m)=0$ and $h'(m+1)=i$. 
Suppose, towards a contradiction, that $f_2(C)$ is output by $g_2$.
We consider the following two cases.
\begin{itemize}
    \item $f_2(C)$ is output in Procedure 1.

Then, we obtain $f_2(C) \in P_{T, m-1}$, and by Proposition \ref{prop:h'}.4, $q \leq m-1$. 
Hence, we obtain $\exists x \neg \varphi_C(x) \to \neg f_2(C)\in P_{T,m-1}$, which implies $P_{T, m-1} \tc \forall x \varphi_C(x)$.
Since $p \leq m-1$, $P_{T,m-1} \tc \varphi_C(\num{j}) \to \neg \lambda'(\num{j})$.
We also have $j \in W$ and $C \in \Sub(A_j)$.
Hence, it follows that $h'(m) \neq 0$, a contradiction.

    \item $f_2(C)$ is output in Procedure 2.

If $f_2(C) \equiv f_2(F)$ and $i \Vdash \Box F$ for some $F \in \MF$, then we obtain $C \equiv F$ because $f_2$ is injective, which contradicts $i \nVdash \Box C$.
If $f_2(C) \equiv \varphi \land \psi$  and $\varphi, \psi \in \{g_2(0), \ldots, g_2(m+s-1)\}$ for some $\varphi$ and $\psi$ where $\xi_s \equiv f_2(C)$,
then $f_2(C)\equiv (f_2(D) \land f_2(E)) \equiv \varphi \land \psi$, and we obtain $f_2(D) \equiv \varphi$.
We obtain $\PR_{g_2}(\gn{f_2(D)})$, that is, $f_2(\Box D)$ holds. 
This contradicts (\ref{(3)}).    
\end{itemize}
Thus, in either case, it follows that $\neg \PR_{g_2}(\gn{f_2(C)})$, that is, $\neg f_2(\Box C)$ holds.

\medskip

Case 2. $C$ is not of the form $D \land E$:

Case 2 can be proved in a similar way as in the proof of Case 1, taking into account that $f_2(C)$ is not of the form $\varphi \land \psi$.
\end{proof}

\begin{cl}\label{cl:4.13}
For any $\LA$-formula $\varphi$ that is not in the range of $f_2$, 
$\PA$ proves: ``if $h'(m)=0$, $h'(m+1)\neq 0$, and $\PR_{g_2}(\gn{\varphi})$ holds, then $P_{T,m-1} \tc \varphi$.''
\end{cl}
\begin{proof}
We prove the claim by induction on the G\"{o}del number of $\varphi$.
Assume that the claim has already been established for all formulas with smaller G\"{o}del numbers than $\varphi$.
We argue in $\PA$: Suppose $h'(m)=0$, $h'(m+1) \neq 0$, and $\PR_{g_2}(\gn{\varphi})$ holds.
If $\varphi$ is output in Procedure 1, then $\varphi \in P_{T,m-1}$.

If $\varphi$ is output in Procedure 2, then there exist $\psi$ and $\rho$ such that $\varphi \equiv \psi \land \rho$, $\PR_{g_2}(\gn{\psi})$, and $\PR_{g_2}(\gn{\rho})$ hold because $\varphi$ is not in the range of $f_2$.
It follows from the induction hypothesis that $P_{T,m-1} \tc \psi$ and $P_{T,m-1} \tc \rho$.
Thus, we obtain $P_{T,m-1} \tc \psi \land \rho$, that is, $P_{T,m-1} \tc \varphi$.
\end{proof}

\begin{cl}\label{cl:4.14}
For any $\LA$-formula $\varphi$,
$\PA \vdash \neg (\PR_{g_2}(\gn{\varphi}) \wedge \PR_{g_2}(\gn{\neg \varphi}))$.
\end{cl}
\begin{proof}
Since $\PA + \Con_T \vdash \Prov_T(\gn{\varphi}) \to \neg \Prov_T(\gn{\neg \varphi})$, 
it follows from Claim \ref{cl:4.10} that $\PA + \Con_T \vdash \PR_{g_2}(\gn{\varphi}) \to \neg \PR_{g_2}(\gn{\neg \varphi})$. 
We prove $\PA + \neg \Con_T \vdash \PR_{g_2}(\gn{\varphi}) \to \neg \PR_{g_2}(\gn{\neg \varphi})$.
We distinguish the following two cases:

\medskip

Case 1: $\varphi \equiv f_2(B)$ for some $B \in \MF$.

By Proposition \ref{prop:h'}.2, it suffices to prove $\PA + \exists x (x \neq 0 \land \lambda'(x)) \vdash f_2(\Box B) \to \neg f_2(\Box \neg B)$. 
By Claim \ref{cl:4.12}.2, we obtain
    \[
    \PA \vdash x \neq 0 \land \lambda'(x)  \land x \nVdash \Box B \to \neg f_2(\Box B).
    \] 
Then, 
$\PA \vdash x \neq 0 \land \lambda'(x)  \land f_2(\Box B) \to x \Vdash \Box B$.
Since $\PA$ proves that $(W,G,\clo,\Vdash)$ satisfies $(\mathsf{D})$,
\[
    \PA \vdash x \neq 0 \land \lambda'(x)  \land f_2(\Box B) \to x \nVdash \Box \neg B. 
\]
By combining this with Claim \ref{cl:4.12}.2, 
\[
    \PA \vdash x \neq 0 \land \lambda'(x)  \land f_2(\Box B) \to \neg f_2(\Box \neg B). 
\]
Hence, we obtain $\PA + \exists x(x \neq 0 \land \lambda'(x)) \vdash f_2(\Box B) \to \neg f_2(\Box \neg B)$. 

\medskip

Case 2: $\varphi \not \equiv f_2(B)$ for all $B \in \MF$. 

We argue in $\PA + \neg \Con_T$:
Let $m$ and $i$ be such that $h'(m)=0$ and $h'(m+1) =i \neq 0$.
Suppose, towards a contradiction, that 
$\PR_{g_2}(\gn{\varphi})$ and $\PR_{g_2}(\gn{\neg \varphi})$ hold.
By Claim \ref{cl:4.13}, we obtain $P_{T,m-1} \tc \varphi$ and $P_{T,m-1} \tc \neg \varphi$.
Hence, it follows that $P_{T,m-1} \tc \neg \lambda'(\num{i})$, and we obtain $h'(m) \neq 0$, a contradiction.
Therefore, $\neg (\PR_{g_2}(\gn{\varphi}) \land \PR_{g_2}(\gn{\neg \varphi}))$ holds.
\end{proof}
We prove Theorem \ref{thmCND}. 
Clause 1 and the implication $(\Rightarrow)$ of Clause 2 follow from Claim \ref{cl:4.11} and Claim \ref{cl:4.14}.  
We prove the implication $(\Leftarrow)$ of Clause 2. 
Suppose $\CND \nvdash A$. 
Then, there exists $k \in \omega \setminus \{0\}$ such that $A \equiv A_k$ and $k \nVdash A$.
 We obtain $\PA \vdash \num{k} \neq 0 \land \num{k} \nVdash A$. 
 It follows that $\PA \vdash \varphi_A (\num{k}) \to \neg \lambda'(\num{k})$.
By using the contrapositive of Claim \ref{cl:4.12}.2, we obtain
$\PA \vdash f_2(A) \to \forall x \varphi_A (x)$, which implies $\PA \vdash f_2(A)  \to \varphi_A (\num{k})$.
Then, we get $\PA \vdash f_2(A) \to \neg \lambda'(\num{k})$.
From Proposition \ref{prop:h'}.3, we conclude $T \nvdash f_2(A)$.
\end{proof}

\begin{cor}[The arithmetical completeness of $\CND$]\label{AC_CND}
\begin{align*}
    \CND & = \bigcap \{ \PL(\PR_T) \mid \PR_T(x) \text{ satisfies } \mathbf{C} \text{ and }  T \vdash \ConS_T \},\\
    & = \bigcap \{ \PL(\PR_T) \mid \PR_T(x) \text{ is } \Sigma_1 \text{ and satisfies } \mathbf{C} \text{ and }  T \vdash \ConS_T \}.
\end{align*}
Moreover, there exists a $\Sigma_1$ provability predicate $\PR_T(x)$ of $T$ such that $\CND = \PL(\PR_T)$.
\end{cor}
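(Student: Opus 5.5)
The two equalities come down to two inclusions, and the ``moreover'' part is exactly Theorem~\ref{thmCND}. Let $\mathcal{S}_1$ be the class of all provability predicates satisfying $\mathbf{C}$ with $T \vdash \ConS_T$, and let $\mathcal{S}_2 \subseteq \mathcal{S}_1$ be its $\Sigma_1$ members. By definition every provability predicate in this paper is $\Sigma_1$, so the two classes coincide. Even without using this, the intersections satisfy
\[
\bigcap_{\PR_T \in \mathcal{S}_1}\PL(\PR_T) \subseteq \bigcap_{\PR_T \in \mathcal{S}_2}\PL(\PR_T).
\]
It therefore suffices to prove two things: $\CND \subseteq \PL(\PR_T)$ for every $\PR_T \in \mathcal{S}_1$, and $\bigcap_{\mathcal{S}_2}\PL(\PR_T) \subseteq \CND$.

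For the first inclusion (soundness) I would use Proposition~\ref{prop:sound-arith}. Clause~1 turns $\mathbf{C}$ into $\CN \subseteq \PL(\PR_T)$, and clause~4 turns $T \vdash \ConS_T$ into $L_{\{\N,\mathsf{D}\}} \subseteq \PL(\PR_T)$. Hence $\PL(\PR_T)$ contains every instance of $\mathsf{C}$ and $\mathsf{D}$ and contains $\N$. It remains to note that $\PL(\PR_T)$ is closed under Modus Ponens and \textsc{Nec}, the latter via $\D{1}$: if $T \vdash f(A)$ for all $f$, then $\PA \vdash \PR_T(\gn{f(A)})$. It is also closed under uniform substitution, since $f \circ \sigma$ is again an arithmetical interpretation. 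Since $\CND$ is the least modal logic with these closure properties containing these axioms, $\CND \subseteq \PL(\PR_T)$.

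For the second inclusion and the ``moreover'' part, I would take the $\Sigma_1$ predicate $\PR_{g_2}$ built in the proof of Theorem~\ref{thmCND}. Claim~\ref{cl:4.10} gives $\PA + \Con_T \vdash \forall x(\PR_{g_2}(x) \leftrightarrow \Prov_T(x))$, and combined with the behaviour under $\neg\Con_T$ this makes $\PR_{g_2}$ a genuine provability predicate of $T$. This needs a small check: the defining condition ``$T\vdash\varphi$ iff $\PA\vdash\PR_{g_2}(\gn{\varphi})$'' holds because every true $\Sigma_1$ instance is $\PA$-provable, and because in Procedure~2 outputs are only generated from formulas already enumerated. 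I expect verifying this to be the main point needing care, though it is routine and is the same situation as in the earlier corollaries. Claims~\ref{cl:4.11} and~\ref{cl:4.14} give $\mathbf{C}$ and $T \vdash \ConS_T$, so $\PR_{g_2} \in \mathcal{S}_2$. Clause~2 of Theorem~\ref{thmCND} provides an $f_2$ with $T \nvdash f_2(A)$ whenever $\CND \nvdash A$, so $\PL(\PR_{g_2}) \subseteq \CND$. Together with soundness this gives $\PL(\PR_{g_2}) = \CND$, and therefore
\[
\bigcap_{\mathcal{S}_2}\PL(\PR_T) \subseteq \PL(\PR_{g_2}) \subseteq \CND,
\]
which closes the chain of inclusions.
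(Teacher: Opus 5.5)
Your proposal is correct and matches how the paper obtains this corollary: every predicate in the class is sound for $\CND$ via Proposition~\ref{prop:sound-arith}, together with closure of $\PL(\PR_T)$ under Modus Ponens, \textsc{Nec}, and substitution, and the reverse inclusion comes from the single predicate $\PR_{g_2}$ of Theorem~\ref{thmCND}, which that theorem already certifies to be a $\Sigma_1$ provability predicate satisfying $\mathbf{C}$ and $T \vdash \ConS_T$. One correction to your side remark: $\PR_{g_2}$ is a provability predicate not because Procedure~2 only recombines earlier outputs (it does not, since it also emits every $f_2(B)$ with $i \Vdash \Box B$), but because $T$ is consistent, so by Proposition~\ref{prop:h'}.2 $h'$ is identically $0$ in $\mathbb{N}$. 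Hence Procedure~2 is never entered, $g_2$ enumerates exactly the $T$-theorems, and $\Sigma_1$-completeness together with the soundness of $\PA$ gives the two directions.
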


\section{Concluding remarks}

In this paper, we investigated modal logics corresponding to provability
predicates satisfying the derivability condition $\mathbf{C}$. 
We introduced a new semantics in which $\Box A$ is interpreted by membership of $A$ in a set of formulas associated with each point. 
Closure under conjunction in these sets corresponds to the modal axiom $\mathsf{C}$. 

We proved completeness results with respect to our new semantics for the logics determined by combinations of the principles $\mathsf{N}$, $\mathsf{C}$, $\mathsf{P}$, $\mathsf{D}$, and $\mathsf{4}$. 
If $\mathsf{N}$ and $\mathsf{D}$ are not both contained, then our construction yields primitive recursively presented finite countermodels.
For logics containing both $\mathsf{N}$ and $\mathsf{D}$, we proved completeness by using a canonical model construction. 
In the case of $\CND$, we further obtained primitive recursively presented countermodels.

The semantics introduced in this paper seems to be a useful tool for analyzing other weak non-normal logics not discussed in this paper. 
It would also be natural to extend the decidability analysis developed in Subsection \ref{CND} for $\CND$ to other logics, such as $\CNDF$.

We also proved the arithmetical completeness theorems for $\CN$, $\CNP$, $\CNF$,
$\CNPF$, and $\CND$. 
The problem of proving the arithmetical completeness of these logics was suggested in our overview presented in~\cite{KK2}. 
The present paper solves it by introducing the new semantics and by developing the construction of primitive recursively presented countermodels. 
In particular, the arithmetical completeness of these logics shows that the corresponding combinations of derivability conditions do not produce additional modal principles through arithmetical principles such as the Fixed Point Theorem.

It remains to clarify a significance of these results from the viewpoint of the second incompleteness theorem. 
By contrast with the arithmetical completeness results above, $\CNDF$ is not arithmetically sound. 
Thus it would be interesting to investigate arithmetically complete extensions of $\CNPF$. 
One example in this direction is given by Mostowski's provability predicate $\PR_T^{\mathrm{M}}(x)$. 
Its provability logic $\PL(\PR_T^{\mathrm{M}})$ is studied in a forthcoming paper by the second author.

\section*{Acknowledgments}

The first author was supported by JST SPRING, Grant Number JPMJSP2148. 
The second author was supported by JSPS KAKENHI Grant Number JP23K03200.

\bibliographystyle{plain}
\bibliography{refs}

\end{document}